\def \div {\mathrm{div}}
\newtheorem{thm}{\bf Theorem}[section]
\newtheorem{lem}[thm]{\bf Lemma}
\newtheorem{den}[thm]{\bf Definition}
\begin{document}

\title{Spatially Adapted First and Second Order Regularization for Image Reconstruction: From an Image Surface Perspective}

\titlerunning{SA-TV-TV$^2$ Regularization for Image Reconstruction}        

\author{Qiuxiang Zhong  \and
        Ryan Wen Liu    \and
        Yuping Duan$^\ast$   
}

\institute{Q. Zhong \at
              Center for Applied Mathematics, Tianjin University, China
           \and
           R. Liu \at
              School of Navigation, Wuhan University of Technology, China
           \and
              Y. Duan$^\ast$ \at
              Center for Applied Mathematics, Tianjin University, China\\
              \email{yuping.duan@tju.edu.cn}
}

\date{Received: date / Accepted: date}

\maketitle

\begin{abstract}
In this paper, we propose a new variational model for image reconstruction by minimizing the $L^1$ norm of the \emph{Weingarten map} of image surface $(x,y,f(x,y))$ for a given image $f:{\mathrm{\Omega}}\rightarrow \mathbb R$. We analytically prove that the Weingarten map minimization model can not only keep the greyscale intensity contrasts of images, but also preserve edges and corners of objects. The alternating direction method of multiplier (ADMM) based algorithm is developed, where one subproblem needs to be solved by gradient descent.
In what follows, we derive a hybrid nonlinear first and second order regularization from the Weingarten map, and present an efficient ADMM-based algorithm by regarding the nonlinear weights as known. By comparing with several state-of-the-art methods on synthetic and real image reconstruction problems, it confirms that the proposed models can well preserve image contrasts and features, especially the spatially adapted first and second order regularization economizing much computational cost.

\keywords{Image reconstruction \and image surface \and Weingarten map \and spatially adaptive regularization parameter \and contrast-preserving}
\end{abstract}

\section{Introduction}
\label{intro}

Image restoration has attracted extensive attention in the fields of image processing and computer vision, where variational formulations are particularly effective in high-quality recovery. Let ${\mathrm{\Omega}}$ be an open bounded subset of $\mathbb R^{n}$ with Lipschitz continuous boundary,  $f:{\mathrm{\Omega}}\rightarrow \mathbb{R}$ be a given image defined on the domain ${\mathrm{\Omega}}$, and $u:{\mathrm{\Omega}}\rightarrow \mathbb{R}$ be the latent clean image. Rudin, Osher and Fatemi \cite{rudin1992nonlinear} proposed the total variation (TV) regularization for image restoration as the following constrained minimization problem
\begin{equation}
\min_{u} ~ \int_{\mathrm{\Omega}}|\nabla u| dx,\quad
\mbox{with}~\int_{\mathrm{\Omega}} udx = \int_{\mathrm{\Omega}} fdx~\mbox{and}~\int_{\mathrm{\Omega}}(u-f)^2 = \sigma^2,
\label{conrof}
\end{equation}
where the constraints correspond to the assumption that the noise is of zero mean and standard deviation $\sigma$, and $|\cdot|$ denotes the Euclidean norm of the gradient vector for each pixel $x\in \rm\Omega$. Chambolle and Lions \cite{chambolle1997image} linked the constrained minimization problem \eqref{conrof} and the following minimization problem
\begin{equation}
\min_{u}~\int_{\mathrm{\Omega}}|\nabla u|dx + \frac{1}{2\lambda}\int_{\mathrm{\Omega}}(u-f)^2dx,
\label{ROF}
\end{equation}
where $\lambda\equiv \mbox{const}>0$ represents the Lagrange multiplier associated with the constraints. Indeed, an alternative way to express the TV model \eqref{ROF} for image reconstruction is given as
\begin{equation}
\min_{u}~\alpha\int_{\mathrm{\Omega}}|\nabla u|dx+\frac12\int_{\mathrm{\Omega}}(u-f)^2dx,
\label{wROF}
\end{equation}
where $\alpha=\lambda>0$ is the regularization parameter. The Lagrange multiplier $\lambda$ in \eqref{ROF} and the regularization parameter $\alpha$ in \eqref{wROF} are used to control the trade-off between the data fidelity and regularization, the best value of which can be estimated by Morozov's discrepancy principle  \cite{wen2012parameter} or in a bilevel optimization framework \cite{kunisch2013bilevel}. Although the TV regularization can help to remove the noises and preserve sharp edges, it also possesses some unfavorable properties to $u$, e.g., staircase effect and contrast reduction \cite{Meyer2001Oscillating,strong2003edge,zhu2012image}.

Because images are comprised of multiple objects at different scales, it is more reasonable to use spatially varying variables instead of constant values. Bertalm{\'\i}o \emph{et al.} \cite{bertalmio2003tv} proposed a variant TV restoration model using a set of $\{\lambda_i\}_{i=1}^r$ with each one corresponding to a region set $\{{\mathrm{\Omega}}_i\}_{i=1}^r$ of the image, where $\{{\mathrm{\Omega}}_i\}_{i=1}^r$ can be obtained by simple segmentation algorithms.
Almansa \emph{et al.} \cite{almansa2008tv} further developed the idea in \cite{bertalmio2003tv} by using local variance estimation for obtaining $\lambda(x):{\mathrm{\Omega}}\rightarrow\mathbb R$ without involving the segmentation in the process.
Gilboa \emph{et al.} \cite{gilboa2006variational} designed a pyramidal structure-texture decomposition of images, which isolated the noise and then estimated the spatially varying constraints based on local variance measures.
Dong \emph{et al.} \cite{dong2011automated} improved the local variance estimator for $\lambda(x)$ and update it automatically in a multi-scale TV scheme for removing Gaussian-distributed noise. Chung \emph{et al.} \cite{chung2016learning} used a bilevel optimization approach in function space for the choice of spatially dependent regularization parameter for \eqref{ROF}.
In the case of impulsive noise, Hinterm{\"u}ller and Rincon-Camacho \cite{hintermuller2010expected} proposed to develop the TVL1 model with spatially adapted regularization parameters based on local expected absolute value estimation for enhancing the image details and preserving the image edge.
Another branch of these methods pursues a spatially varying $\alpha(x):{\mathrm{\Omega}}\rightarrow\mathbb R$ for \eqref{wROF}, which are also known as weighted TV.
Strong and Chan \cite{strong2003edge} considered $\alpha(x)$ as a spatially adapted weight in TV regularization to remove smaller-scaled noise while leaving lager-scaled features essentially intact.
Yuan \emph{et al.} \cite{yuan2012multiframe} proposed a spatially weighted TV model in multi-frame super-resolution reconstruction for efficiently reducing the staircase effect and preserving the edge information.
Langer \cite{langer2017automated} realized the automated parameter selection of \eqref{wROF} based on the discrepancy principle.
Recently, Hinterm{\"u}ller \emph{et al.} \cite{hintermuller2017optimal1,hintermuller2017optimal2} computed the spatially adaptive weights for \eqref{wROF} using a bilevel optimization approach.

Although spatially varying $\lambda(x)$ or $\alpha(x)$ in the Rudin-Osher-Fetami model \eqref{ROF} and \eqref{wROF} can improve the reconstruction quality, they can not eliminate the staircase effect in the relatively large piecewise linear regions. Thus, high order variational models are proposed and studied in the last two decades.
Lysaker \emph{et al.} \cite{lysaker2003noise,lysaker2006iterative} proposed the noise removal model using the high order regularization term, that is
\begin{equation}
\min_{u}~\int_{\mathrm{\Omega}} |\nabla^2u|_{ F} dx + \frac{1}{2\lambda}\int_{\mathrm{\Omega}}(u-f)^2dx,
\label{htv}
\end{equation}
where $\nabla^2 u$ is the Hessian of $u$ and $|\nabla^2 u|_F=\sqrt{|u_{xx}|^2 + |u_{xy}|^2+ |u_{yx}|^2+|u_{yy}|^2}$ is the Frobenius norm defined on each pixel $x\in\rm\Omega$. The optimality condition of \eqref{htv} gives a fourth-order partial differential equation, which has been further studied both theoretically and numerically in \cite{hinterberger2006variational,chan2007image,wu2010augmented,papafitsoros2014combined}.
Papafitsoros and Sch{\"o}nlieb \cite{papafitsoros2014combined} suggested the following combined first and second order variational model
\begin{equation}
\min_{u}~\alpha \int_{\mathrm{\Omega}}|\nabla u|dx+\beta\int_{\mathrm{\Omega}}|{\nabla}^2 u|_Fdx+\frac{1}{2}\int_{\mathrm{\Omega}}(u-f)^2dx,
\label{PS}
\end{equation}
where $\alpha$ and $\beta$ are positive constants. The idea of the model \eqref{PS} is to regularize the reconstructed image with a fairly large weight $\alpha$ in the first order term to preserve the jumps and a not too large weight $\beta$ for the second order term to eliminate the staircase effect without introducing any serious blur.
Another important high order TV model was proposed by Bredies \emph{et al.} \cite{bredies2010total}, the so-called total generalized variation (TGV), which can integrate to incorporate smoothness from the first up to the $k$-th derivatives.

In addition, geometric attributes of curves and surfaces also provide high order regularization for image processing tasks. The well-known Euler's elastica model \cite{shen2003euler,tai2011fast,yashtini2016fast,Deng2019} minimizes the total elastica of all level curves in images, which reads
\begin{align}
\min_{u} ~ \int_\mathrm\Omega \bigg(a+b\Big(\nabla\cdot\frac{\nabla u}{|\nabla u|}\Big)^2\bigg)|\nabla u|dx+\frac{1}{2\lambda}\int_\mathrm\Omega(u-f)^2dx.
\label{euler-elastica}
\end{align}
Due to the strong priors for the continuity of edges provided by Euler's elastica, it has been used as the regularization for various shape and image processing tasks \cite{Chambolle2019}, such as image inpainting, shape completion, and shown to be able to achieve better restoration results than the TV regularization.

By considering the associated \emph{image surface} or \emph{graph} of $f$ in $\mathbb R^{n+1}$, the noise removal problem becomes the task of finding an approximate piecewise smooth surface \cite{Lysaker2004}. Then it is straightforward to employ the geometric invariants, e.g., \emph{mean curvature} and \emph{Gaussian curvature}, as the regularization term for image surface processing. Zhu and Chan \cite{zhu2012image} proposed the following mean curvature minimization model for image denoising
\begin{equation}\label{mcdenoising}
\min_{u}~\int_{\mathrm{\Omega}}\bigg|\nabla\cdot\frac{\nabla u}{\sqrt{1+|\nabla u|^2}}\bigg| dx + \frac{1}{2\lambda}\int_{\mathrm{\Omega}}(u-f)^2dx,
\end{equation}
where $|\cdot|$ is actually the absolute value norm, also equivalent to Euclidean norm of one-dimensional vectors.
The $L^1$ norm of mean curvature is shown to be a desirable regularization for image denoising, which can not only preserve image contrast and corners of objects, but also remove the staircase effect. The Gaussian curvature has also been used as the regularization term for image denoising problems \cite{brito2016image}
\begin{equation}\label{gauss-curvature-l1}
\min_u~\int_{\mathrm\Omega}\frac{|\mbox{det } \nabla^2 u|}{(1+|\nabla u|^2)^2}dx+ \frac{1}{2\lambda}\int_{\mathrm{\Omega}}(u-f)^2dx,
\end{equation}
where $ \nabla^2 u$ is the Hessian of function $u$ and $\mbox{det } \nabla^2 u$ denotes the determinant of Hessian.
It is proven to be with the same geometric properties as mean curvature model \eqref{mcdenoising}. However, due to the highly nonlinearity of the model \eqref{mcdenoising} and \eqref{gauss-curvature-l1}, the minimizations of curvature regularized models are quite challenging. Although ADMM-based algorithms have been developed for the Euler's elastica model \eqref{euler-elastica} and mean curvature model \eqref{mcdenoising}, multiple artificial variables are introduced resulting in more parameters need to be selected manually \cite{tai2011fast,zhu2013augmented}.
The case of Gaussian curvature model \eqref{gauss-curvature-l1} is even more complicated, which was solved by a two-step method based on the vector filed smoothing and gray level interpolation \cite{brito2016image}. By estimating the curvatures explicitly, Zhong, Yin and Duan \cite{Zhong2021image} proposed to minimize certain functions of Gaussian/mean curvature over the image surface, which are solved as a weighted image surface minimization problem with high efficiency.

In this work, we first introduce the \emph{Weingarten map} or \emph{shape operator} of the image surface as the regularization for image reconstruction. We theoretically show that the Weingarten map regularizer can provide good geometric properties including keeping image contrast and preserving edges and corners of objects. The Weingarten map minimization model is solved by the ADMM-based algorithm, where the original nontrivial problem is decomposed into three subproblems. Although two subproblems can be handled with Fast Fourier Transform (FFT) and the closed form solution, the remaining one needs to be solved by gradient descent due to its high nonlinearity. Therefore, to further improve the computational efficiency, we reformulate the Weingarten map into a hybrid nonlinear first and second order regularization. By regarding the nonlinear weights as known, an efficient numerical algorithm is developed based on the proximal ADMM, where all variables can be solved by either FFT or shrinkage operation. 
Numerous experiments on image denoising, deblurring and inpainting are conducted to demonstrate the effectiveness and efficiency of the proposed models by comparing with other well established high order methods.

The rest of the paper is organized as follows. In Sect. \ref{sec2}, we introduce the Weingarten map minimization model and verify its geometric properties in preserving image contrast, edges and corners of objects. Sect. \ref{sec3} is devoted to developing the numerical algorithm for the Weingarten map minimization model. We derive a hybrid nonlinear first and second order regularization from the proposed Weingarten map and discuss its numerical solution in Sect. \ref{sec4}. Sect. \ref{sec5} implements the comprehensive numerical experiments to demonstrate the effectiveness and superiority of the proposed method. We summarize our specific work with a conclusion in Sect. \ref{sec6}.

\subsection*{Notations}
Let ${\rm\Omega}$ be a domain in $\mathbb{R}^n$ and $p$ be a positive real number. We denote $L^p(\rm\Omega)$ as the class of all measurable functions $f: {\rm\Omega}\rightarrow \mathbb{R}$ such that
\begin{equation*}
  L^p({\rm\Omega})=\{f~\big|~\int_{\rm\Omega}|f(x)|^pdx <\infty,~1\leq p\leq\infty\},
\end{equation*}
We also define the norm as $\|f\|_p=(\int_{\rm \Omega}|f(x)|^pdx)^{\frac{1}{p}}$ with $1\leq p<\infty$ and $\|f\|_{\infty}=\sup\limits_{x\in {\rm \Omega}}|f(x)|$. If $p=2$, we denote $V=L^2(\rm\Omega)$. The inner product of two functions $f,g\in V$ is given by $\langle f,g\rangle_V=\int_{\rm\Omega} f(x)g(x)dx$, and the norm $\|f\|_V=\sqrt{\langle f,f\rangle_V}$.
We let $Q_1=V\times V$. Then for $p=(p_1,p_2)\in Q_1$ and $q=(q_1,q_2)\in Q_1$, there are
\[\langle p,q\rangle_{Q_1}=\langle p_1,q_1\rangle_V+\langle p_2,q_2\rangle_V,\]
and
\[\|p\|_{Q_1} = \sqrt{\langle p,p\rangle_{Q_1}}.\]
Suppose $Q_2=V\times V\times V\times V$. Given $v=\begin{pmatrix} v_{11} & v_{12} \\ v_{21} & v_{22} \end{pmatrix}\in Q_2$, $w=\begin{pmatrix} w_{11} & w_{12} \\ w_{21} & w_{22} \end{pmatrix}\in Q_2$, we also define the inner product and norm accordingly
\[\langle v,w\rangle_{Q_2}=\langle v_{11},w_{11}\rangle_V+\langle v_{12},w_{12}\rangle_V+\langle v_{21},w_{21}\rangle_V+\langle v_{22},w_{22}\rangle_V,\]
and
\[\|v\|_{Q_2} = \sqrt{\langle v,v\rangle_{Q_2}}.\]
To conclude this section, we would like to mention the deviations in the following sections may lack rigorous mathematical foundations. To the best of knowledge, the proper functional frameworks to formulate the curvature minimization problems \eqref{euler-elastica}, \eqref{mcdenoising} and \eqref{gauss-curvature-l1}, have not been identified yet, which have to be a subspace of $L^2(\rm\Omega)$. The situation is the same for our Weingarten map minimization problem. Therefore, we will say no more about the proper choice of the functional space for the proposed model.

\section{The Weingarten map minimization model}
\label{sec2}
\subsection{Description of our model}
Consider the level set function $\phi(x,y,z)=z-u(x,y)$, the zero level set of which corresponds to the image surface $\mathcal S =(x,y, u(x,y))\subset{\mathbb R^3}$. The \emph{unit normal} for points on the zero level set $\{(x,y,z): \phi(x,y,z) = 0\}$ is defined as (cf. equation (1.2) in \cite{osher2002level})
\begin{equation}
N_u= \frac{\nabla \phi}{|\nabla \phi|}=\frac{(\nabla u,-1)}{\sqrt{1+|\nabla u|^2}}.
\label{satv}
\end{equation}
The established mean curvature model \eqref{mcdenoising} is derived by minimizing the $L^1$ norm of the divergence of the unit normal.
Indeed, by directly minimizing the $L^1$ norm of the first component of unit normal vector, we have
\begin{align}
\min_u \int_{\mathrm{\Omega}}\bigg|\frac{\nabla u}{\sqrt{1+|\nabla u|^2}}\bigg| dx,
\label{satvm}
\end{align}
which is a nonlinear first order regularization with the denominator measuring the surface area.
The Weingarten map of image surface can be achieved by pursuing the gradient, i.e.,
\begin{equation}
W_u=\nabla \bigg(\frac{\nabla u}{\sqrt{1+|\nabla u|^2}}\bigg)= \nabla \frac{1}{\sqrt{1+|\nabla u|^2}}\otimes \nabla u+ \frac{1}{\sqrt{1+|\nabla u|^2}}\nabla^2 u,
\label{satv2}
\end{equation}where $a\otimes b=ab^T$, $a,b\in\mathbb R^n$, represents the Euclidean outer product.
Moreover, the matrix form of \eqref{satv2} can be given as
\begin{equation*}
  \Large
  W_u=
  \left[
  \begin{array}{cc}
   \frac{(1+u_y^2)u_{xx}-u_xu_yu_{xy}}{(1+u_x^2+u_y^2)^{3/2}} & \frac{(1+u_x^2)u_{xy}-u_xu_yu_{xx}}{(1+u_x^2+u_y^2)^{3/2}} \\
   \frac{(1+u_y^2)u_{xy}-u_xu_yu_{yy}}{(1+u_x^2+u_y^2)^{3/2}}  & \frac{(1+u_x^2)u_{yy}-u_xu_yu_{xy}}{(1+u_x^2+u_y^2)^{3/2}}\\
  \end{array}
\right],
\end{equation*}
which can be formally defined for each point $p\in\mathcal S$ as a linear self-conjugate map
\[W_{p}:= T_{p}\mathcal S \rightarrow T_{p}\mathcal S \]
with $T_p\mathcal S$ denoting the tangent space of $p$. Particularly, the Weingarten map has very good geometric properties, which can be also interpolated as the combination of the \emph{first fundamental form} {\rm{\uppercase\expandafter{\romannumeral1}}} and the \emph{second fundamental form} {\rm{\uppercase\expandafter{\romannumeral2}}} of the image surface, i.e.,
$W_p={\mathrm{\uppercase\expandafter{\romannumeral1}}}^{-1}{\mathrm{\uppercase\expandafter{\romannumeral2}}}.$
According to the differential geometry theory, the eigenvalues of $W_p$ are the two principal curvatures $\kappa_1$, $\kappa_2$ and it follows that
\begin{den}\label{HGWu}
Let $\mathcal S\subset\mathbb{R}^3$ be an oriented surface and $W_p$ be its Weingarten map at a point $p\in \mathcal S$. Then the mean curvature and Gaussian curvature of point $p$ can be defined by
\begin{align*}
  H_p  :=\frac{1}{2}(\kappa_1+\kappa_2)=\frac{1}{2}\mathrm{trace}(W_p) ~~\mbox{and}\quad K_p  :=\kappa_1\kappa_2=\mathrm{det}(W_p).
\end{align*}
\end{den}
Inspired by the success of the mean curvature and Gaussian curvature for image denoising, we propose to minimize the $L^1$ norm of the Weingarten map, that is to consider the following energy functional
\begin{equation}
E(u)=\int_{\mathrm{\Omega}}\bigg|\nabla \frac{1}{\sqrt{1+|\nabla u|^2}}\otimes\nabla u+ \frac{1}{\sqrt{1+|\nabla u|^2}}\nabla^2 u\bigg|_Fdx+\frac{1}{2\lambda}\int_{\mathrm{\Omega}}(u-f)^2dx.
\label{wHM}
\end{equation}

\begin{figure*}[t]
      \begin{center}
			\includegraphics[width=1.00\linewidth]{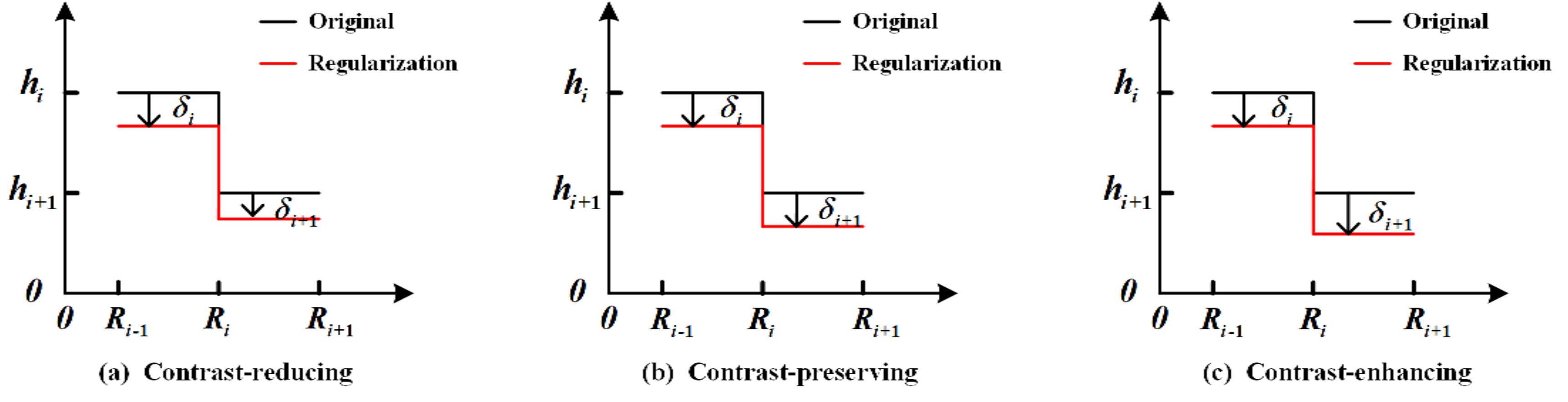}
	  \end{center}
	\caption{Illustration of contrast preservation. (a) Contrast-reducing: the size of discontinuity at $R_i$ after the regularization is reduced, i.e., $\delta_i>\delta_{i+1}$; (b) Contrast-preserving: the size of discontinuity at $R_i$ after the regularization is invariant, i.e., $\delta_i=\delta_{i+1}$; (c) Contrast-enhancing: the size of discontinuity at $R_i$ after the regularization is increased, i.e., $\delta_i<\delta_{i+1}$.}
	\label{contrast}
\end{figure*}

\subsection{Geometric properties of our model}
It is well-known that edges and contrasts are important features for signals and images. Thus, an ideal model for image reconstruction should be able to preserve not only neat edges, but also the contrasts of the edges, i.e., the size of the discontinuities. As discussed in \cite{strong2003edge,wu2018general}, TV regularization suffers from a contrast reduction (see Fig. \ref{contrast}(a)), and only nonconvex and nonsmooth regularization can preserve the image contrast (see Fig. \ref{contrast}(b)). In this subsection, we develop a preliminarily analytical study on the Weingarten map minimization model \eqref{wHM} to verify its contrast-preserving and edge-preserving properties.

Let $f$ be a piecewise constant function defined on a rectangle ${\mathrm{\Omega}}=(-2R,2R)\times(-2R,2R)$ composed of a series of open disks $B(0, R_i)\in\mathbb R^2$ centered at origin with radius $R_i$, for $i=1,\ldots,n$. To identify the subdomains ${\rm\Omega}_i$, $i=1,\ldots,n$, as displayed in Fig. \ref{piecewise-constant-function}(a), we define a piecewise constant level set function
\[ \phi(x)= i,~\mbox{for}~x\in {\rm\Omega}_i, ~~ i = 1,\ldots,n, \]
where ${\rm\Omega}_1 = B(0,R_1)$, and ${\rm\Omega}_{i} = B(0,R_{i})\backslash B(0, R_{i-1})$ for $i=2, \ldots, n$. Associated with such a piecewise constant level set function, the characteristic functions of the subdomains are given as
\[\chi_i = \frac 1{\alpha_i} \prod_{j=1,j\neq i}^n (\phi-j),\quad \alpha_i = \prod_{k=1, k\neq i}^n (i-k),\]
for which we have $\chi_i(x)= 1$ for $x\in{\rm\Omega}_i$ and $\chi_i(x) =0$ elsewhere. We further define $f= \sum_{i=1}^n h_i\chi_i(x,y)$ with $h_i>0$ for $i=1,\ldots,n$. Since $f$ is radial symmetric, it can be obtained by rotating the function of one variable $\hat{f}(x)=\sum_{i=1}^n h_i\chi_{i[0,2R]}(x)$ around the vertical axis. As shown in Fig. \ref{piecewise-constant-function}(b), we approximate $\hat f$ by a sequence of smooth functions $\{u_n\}$. Then we can calculate the integral $\int_{\rm\Omega}|W_{u_n}|dxdy$ and define $\int_{\mathrm{\Omega}} |W_f|_Fdxdy$ to be $\lim_{n\rightarrow +\infty} \int_{\mathrm{\Omega}} |W_{u_n}|_Fdxdy$.

\begin{lem}\label{lemmaHf}
Assume $f= \sum_{i=1}^n h_i\chi_i(x,y)$ be a piecewise constant image defined on a rectangle ${\rm\Omega}=(-2R,2R)\times(-2R,2R)$, where $\chi_i$ is the characteristic function of the subdomain ${\rm\Omega}_i$ and $h_i>0$ for $i=1,\ldots, n$. Note that the subdomains $\{{\rm\Omega}_i\}_{i=1}^n$ are defined by the open disks $B(0, R_i)$, $i=1,\ldots, n$, centered as the origin such that ${\rm\Omega}_1 = B(0,R_1)$ and ${\rm\Omega}_{i} = B(0,R_{i})\backslash B(0, R_{i-1})$ for $i=2, \ldots, n$. Then we obtain
\begin{equation}\label{Hfintegraln}
  \int_{\mathrm{\Omega}} |W_f|_Fdxdy = \sum_{i=1}^{n}4\pi R_{i}.
\end{equation}
\end{lem}

\begin{proof}\label{ProofHf}
Referring to the Lemma 2.1 in \cite{zhu2012image}, we define a sequence of smooth functions $\{u_n\}$ of one variable and rotate their graphs around the vertical axis to generate smooth radial symmetric surfaces, which are used to approximate the surface of $f$. Specially, we consider rotating the curves of a sequence of smooth functions $\{u_n\}$ in the set $\bf S$ defined as
\begin{align*}
  {\bf S} =  \Big\{ u\in {\bf C}^2[0,2R]:&~u''(x)\leq 0,~\mbox{for}~ x\in(R_{i-1},R_i),~ u''(x)\geq 0,~\mbox{for}~x\in(R_i,R_{i+1});\\
  ~&\exists~\varepsilon>0,~R_{i-1}<R_i-\varepsilon<R_i<R_i+\varepsilon<R_{i+1}, R_{0} =0,~ R_{n+1} = 2R, ~\mbox{such that}~\\ &u(x)=\hat{f}(x)~\mbox{if}~x\in(R_{i-1},R_i-\varepsilon],~u(x)=\hat{f}(x)~\mbox{if}~x\in[R_i+\varepsilon,R_{i+1}),~\forall ~1\leq i\leq n;\\
  &u(0)=\hat{f}(0),~ u(2R)=0;~u'(R_i) <-\frac{2h_i}{R_i}\Big\}.
\end{align*}
If $u\in\bf S$, rotating $u$ yields an image surface $z=u(r)$ with $r=\sqrt{x^2+y^2}$. From ${\bf S}$, we can select a sequence of smooth functions to approach the function $\hat{f}$, then obtain a sequence of smooth radial symmetric functions to approximate the target function $f$.

For the radial symmetric surface $z=u(r)=u(\sqrt{x^2+y^2})$, we have
\begin{equation*}
 u_x=u'\frac{x}{r},~u_y=u'\frac{y}{r},~u_{xx}=u''\frac{x^2}{r^2}+u'\frac{y^2}{r^3},
 ~u_{yy}=u''\frac{y^2}{r^2}+u'\frac{x^2}{r^3},~u_{xy}=u''\frac{xy}{r^2}-u'\frac{xy}{r^3}.
\end{equation*}
Therefore, the Weingarten map of a surface $z=u(r)$ takes the following form
\begin{align*}
  W_u  & = \nabla \frac{1}{\sqrt{1+|\nabla u|^2}}\otimes\nabla u+\frac{1}{\sqrt{1+|\nabla u|^2}}{\nabla}^2 u  \\
  & = \Large \left[
  \begin{array}{cc}
   \frac{u''\frac{x^2}{r^2}+u'\frac{y^2}{r^3}(1+(u')^2)}{(1+(u')^2)^{3/2}} &
   \frac{u''\frac{xy}{r^2}-u'\frac{xy}{r^3}(1+(u')^2)}{(1+(u')^2)^{3/2}} \\
   \frac{u''\frac{xy}{r^2}-u'\frac{xy}{r^3}(1+(u')^2)}{(1+(u')^2)^{3/2}}  &
   \frac{u''\frac{y^2}{r^2}+u'\frac{x^2}{r^3}(1+(u')^2)}{(1+(u')^2)^{3/2}}\\
  \end{array}
    \right].
\end{align*}
\begin{figure*}[t]
      \centering
      \subfigure[]{
            \includegraphics[width=0.3\linewidth]{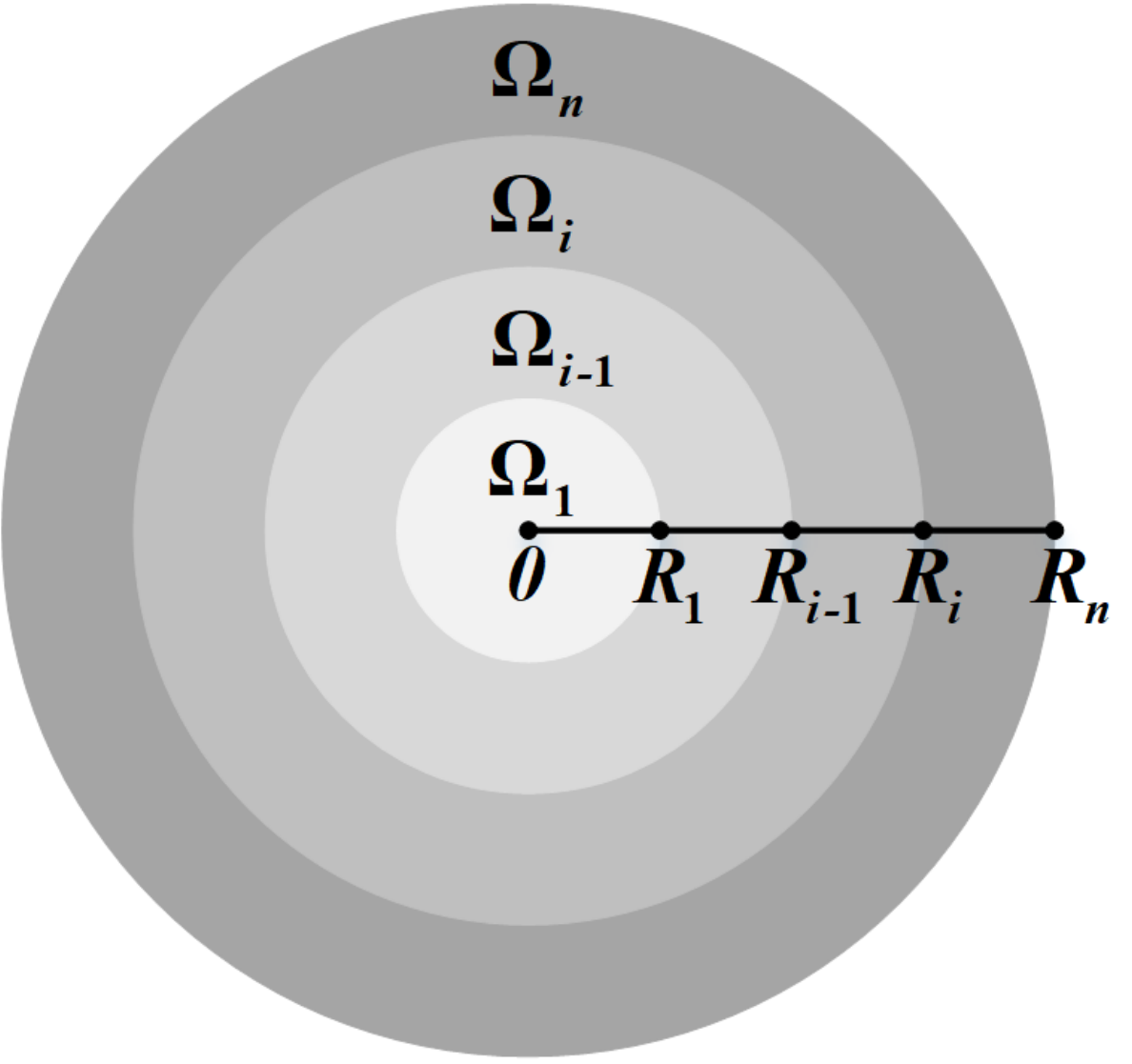}}\hspace{15ex}
      \subfigure[]{
            \includegraphics[width=0.38\linewidth]{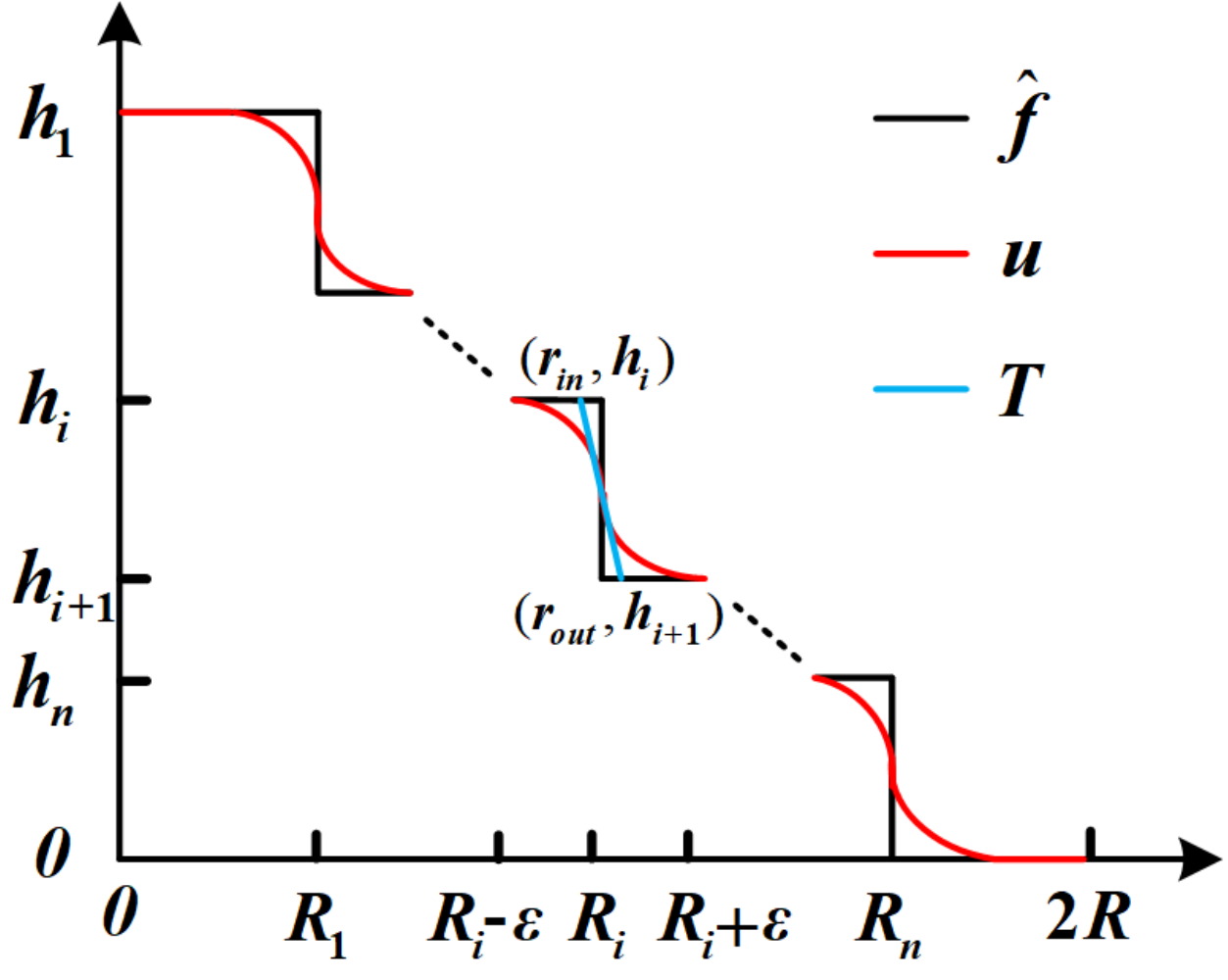}}
	\caption{(a): the piecewise constant function $f$ defined on $\rm\Omega=(-2R,2R)\times(-2R,2R)$; and (b): the generatrix function $u$ that creates the approximation function by rotating around the vertical axis, where $T$ denotes the tangent line of $u$ at point $(R_i,u(R_i))$ in the subdomain $(R_i-{\varepsilon},R_i+{\varepsilon})$.}
	\label{piecewise-constant-function}
\end{figure*}
Furthermore, the Weingarten map regularization can be written as follows
\begin{equation}\label{Hgnorm}
  |W_u|_F  = \sqrt{\Big(\frac{u''}{(\sqrt{1+(u')^2})^3}\Big)^2+\Big(\frac{u'}{r\sqrt{1+(u')^2}}\Big)^2}.
\end{equation}
Correspondingly, we obtain the following result
\begin{align}\label{Hgintegral}
  \int_{\mathrm{\Omega}} |W_{u}|_F dxdy & = \int_0^{2\pi}d\theta \int_0^{2R}r|W_u|_Fdr \nonumber \\
  & = 2\pi\int_0^{2R} r\sqrt{\Big(\frac{u''}{(\sqrt{1+(u')^2})^3}\Big)^2+\Big(\frac{u'}{r\sqrt{1+(u')^2}}\Big)^2}dr.
\end{align}
If $u\in{\bf S}$, one can see that $u''\leq 0$ and $u'\leq 0$ for $r\in(R_i-\varepsilon,R_{i})$, we obtain
\begin{equation*}
 \Big|\frac{u''}{(\sqrt{1+(u')^2})^3}-\frac{u'}{r\sqrt{1+(u')^2}}\Big| \leq |W_u|_F \leq  \Big|\frac{u''}{(\sqrt{1+(u')^2})^3} + \frac{u'}{r\sqrt{1+(u')^2}}\Big|.
\end{equation*}
Note that $\frac{u''}{(\sqrt{1+(u')^2})^3}=\Big[\frac{u'}{\sqrt{1+(u')^2}}\Big]'$ and $\frac{u''}{(\sqrt{1+(u')^2})^3} + \frac{u'}{r\sqrt{1+(u')^2}}=\frac{1}{r}\Big[r\frac{u'}{\sqrt{1+(u')^2}}\Big]'$, one gets
\begin{align*}
 &\int_{R_i-{\varepsilon}}^{R_{i}}r\Big|\frac{u''}{(\sqrt{1+(u')^2})^3}-\frac{u'}{r\sqrt{1+(u')^2}}\Big|dr \geq \int_{R_i-{\varepsilon}}^{R_{i}}r\Big|\frac{u''}{(\sqrt{1+(u')^2})^3}\Big|dr-\int_{R_i-{\varepsilon}}^{R_{i}}\Big|\frac{u'}{\sqrt{1+(u')^2}}\Big|dr   \\
  &=-\int_{R_i-{\varepsilon}}^{R_{i}}r\Big[\frac{u'}{\sqrt{1+(u')^2}}\Big]'dr+\int_{R_i-{\varepsilon}}^{R_{i}}\frac{u'}{\sqrt{1+(u')^2}}dr \\
  &= -\Big(R_{i}\frac{u'(R_{i})}{\sqrt{1+(u'(R_{i}))^2}}- (R_i-{\varepsilon})\frac{u'(R_i-{\varepsilon})}{\sqrt{1+(u'(R_i-{\varepsilon}))^2}}\Big) +2\int_{R_i-{\varepsilon}}^{R_{i}}\frac{u'}{\sqrt{1+(u')^2}}dr,
\end{align*}
and
\begin{align*}
 \int_{R_i-{\varepsilon}}^{R_{i}}r\Big|\frac{u''}{(\sqrt{1+(u')^2})^3} + \frac{u'}{r\sqrt{1+(u')^2}}\Big|dr &= \int_{R_i-{\varepsilon}}^{R_{i}}\Big|\Big[r\frac{u'}{\sqrt{1+(u')^2}}\Big]'\Big|dr = - \int_{R_i-{\varepsilon}}^{R_{i}}\Big[r\frac{u'}{\sqrt{1+(u')^2}}\Big]'dr \\
 &= -\Big(R_{i}\frac{u'(R_{i})}{\sqrt{1+(u'(R_{i}))^2}}- (R_i-{\varepsilon})\frac{u'(R_i-{\varepsilon})}{\sqrt{1+(u'(R_i-{\varepsilon}))^2}}\Big).
\end{align*}
Thus we have
\begin{align}\label{HgRi}
-\Big(R_{i}\frac{u'(R_{i})}{\sqrt{1+(u'(R_{i}))^2}}& - (R_i-{\varepsilon})\frac{u'(R_i-{\varepsilon})}{\sqrt{1+(u'(R_i-{\varepsilon}))^2}}\Big) +2\int_{R_i-{\varepsilon}}^{R_{i}}\frac{u'}{\sqrt{1+(u')^2}}dr \\
&\leq \int_{R_i-{\varepsilon}}^{R_{i}}r|W_u|_Fdr \leq -\Big(R_{i}\frac{u'(R_{i})}{\sqrt{1+(u'(R_{i}))^2}}- (R_i-{\varepsilon})\frac{u'(R_i-{\varepsilon})}{\sqrt{1+(u'(R_i-{\varepsilon}))^2}}\Big).\nonumber
\end{align}
When $r\in(R_{i},R_i+{\varepsilon})$, $u''\geq 0$ and $u'\leq 0$, we get
\begin{equation*}
\Big|\frac{u''}{(\sqrt{1+(u')^2})^3}+\frac{u'}{r\sqrt{1+(u')^2}}\Big| \leq |W_u|_F \leq \Big|\frac{u''}{(\sqrt{1+(u')^2})^3} - \frac{u'}{r\sqrt{1+(u')^2}}\Big|,
\end{equation*}
where
\begin{align*}
 \int_{R_{i}}^{R_i+\varepsilon}r\Big|\frac{u''}{(\sqrt{1+(u')^2})^3}+\frac{u'}{r\sqrt{1+(u')^2}}\Big|dr &\geq\int_{R_{i}}^{R_i+\varepsilon}r\Big[\frac{u'}{\sqrt{1+(u')^2}}\Big]'dr +\int_{R_{i}}^{R_i+\varepsilon}\frac{u'}{\sqrt{1+(u')^2}}dr \\
 &= (R_i+{\varepsilon})\frac{u'(R_i+{\varepsilon})}{\sqrt{1+(u'(R_i+{\varepsilon}))^2}} -R_{i}\frac{u'(R_{i})}{\sqrt{1+(u'(R_{i}))^2}},
\end{align*}
and
\begin{align*}
 &\int_{R_{i}}^{R_i+\varepsilon}r\Big|\frac{u''}{(\sqrt{1+(u')^2})^3} - \frac{u'}{r\sqrt{1+(u')^2}}\Big|dr = \int_{R_{i}}^{R_i+\varepsilon}r\Big[\frac{u'}{\sqrt{1+(u')^2}}\Big]'dr - \int_{R_{i}}^{R_i+\varepsilon}\frac{u'}{\sqrt{1+(u')^2}}dr \\
 &= (R_i+{\varepsilon})\frac{u'(R_{i+\varepsilon})}{\sqrt{1+(u'(R_{i+\varepsilon}))^2}} -R_{i}\frac{u'(R_{i})}{\sqrt{1+(u'(R_{i}))^2}} - 2\int_{R_{i}}^{R_i+\varepsilon}\frac{u'}{\sqrt{1+(u')^2}}dr.
\end{align*}
It follows that
\begin{align}\label{HgRi1}
(R_i+{\varepsilon})\frac{u'(R_i+{\varepsilon})}{\sqrt{1+(u'(R_i+{\varepsilon}))^2}} &-R_{i}\frac{u'(R_{i})}{\sqrt{1+(u'(R_{i}))^2}} \leq \int_{R_{i}}^{R_i+\varepsilon}r|W_u|_Fdr \\
&\leq (R_i+{\varepsilon})\frac{u'(R_i+{\varepsilon})}{\sqrt{1+(u'(R_i+{\varepsilon}))^2}} -R_{i}\frac{u'(R_{i})}{\sqrt{1+(u'(R_{i}))^2}} - 2\int_{R_{i}}^{R_i+\varepsilon}\frac{u'}{\sqrt{1+(u')^2}}dr. \nonumber
\end{align}
When $r\in [0,R_1-\varepsilon]$, $[R_n+\varepsilon,2R]$ and $[R_i+\varepsilon,R_{i+1}-\varepsilon]$, $i=1,\ldots,n-1$, there is $u'=u''=0$.
Thus, based on \eqref{Hgintegral}, by adding the formulas \eqref{HgRi} and \eqref{HgRi1}, we obtain the following inequalities
\begin{align}\label{Hginequality}
&2\pi\sum_{i=1}^{n}\Big(-2R_{i}\frac{u'(R_{i})}{\sqrt{1+(u'(R_{i}))^2}} +(R_{i}-\varepsilon)\frac{u'(R_{i}-\varepsilon)}{\sqrt{1+(u'(R_{i}-\varepsilon))^2}} +(R_i+{\varepsilon})\frac{u'(R_i+{\varepsilon})}{\sqrt{1+(u'(R_i+{\varepsilon}))^2}} \nonumber \\ &+2\int_{R_i-{\varepsilon}}^{R_{i}}\frac{u'}{\sqrt{1+(u')^2}}dr\Big)
\leq \int_{\mathrm{\Omega}} |W_{u}|_Fdxdy \leq 2\pi\sum_{i=1}^{n}\Big(-2R_{i}\frac{u'(R_{i})}{\sqrt{1+(u'(R_{i}))^2}} \nonumber  \\ &+(R_{i}-\varepsilon)\frac{u'(R_{i}-\varepsilon)}{\sqrt{1+(u'(R_{i}-\varepsilon))^2}} +(R_i+{\varepsilon})\frac{u'(R_i+{\varepsilon})}{\sqrt{1+(u'(R_i+{\varepsilon}))^2}}  -2\int_{R_{i}}^{R_i+{\varepsilon}}\frac{u'}{\sqrt{1+(u')^2}}dr\Big).
\end{align}
Considering $\{u_n\}\in{\bf S}$ is any sequence of functions that pointwise converge to $\hat{f}$, it is easy to obtain that $u'_n(R_{i})\rightarrow -\infty$ and $u'_n(r)\rightarrow 0$ with $r\neq R_{i}$ when $n\rightarrow +\infty$. In addition, through the dominated convergence theorem, we have
\begin{align}\label{gnlimit}
\lim_{n\rightarrow +\infty} (R_i-{\varepsilon})\frac{u_n'(R_i-{\varepsilon})}{\sqrt{1+(u_n'(R_i-{\varepsilon}))^2}} &= \lim_{n\rightarrow +\infty} (R_i+{\varepsilon})\frac{u_n'(R_i+{\varepsilon})}{\sqrt{1+(u_n'(R_i+{\varepsilon}))^2}} \\
&= \lim_{n\rightarrow +\infty} \int_{R_{i}-\varepsilon}^{R_i}\frac{u'_n}{\sqrt{1+(u'_n)^2}}dr = \lim_{n\rightarrow +\infty} \int_{R_i}^{R_{i}+\varepsilon}\frac{u'_n}{\sqrt{1+(u'_n)^2}}dr=0. \nonumber
\end{align}
Moreover, according to the inequalities \eqref{Hginequality}, there is
\begin{equation*}
 \lim_{n\rightarrow +\infty} \int_{\mathrm{\Omega}} |W_{u_n}|_Fdxdy = \int_{\mathrm{\Omega}} |W_{f}|_Fdxdy = \sum_{i=1}^{n} 4\pi R_{i}. ~\quad\blacksquare
\end{equation*}
\end{proof}

\begin{remark}
The integral of Weingarten map is similar to mean curvature \cite{zhu2012image}, both of which do not rely on image intensities. It describes an important characteristic of the Weingarten map regularizer, which motivates the following theorem showing the model \eqref{wHM} with properties of edge and contrast preservation.
\end{remark}

\begin{remark}
For a broad class of $f=h\chi_A(x,y)$ with $A\subset\mathrm{\Omega}$ being an arbitrary open set with $C^2$ boundary, one can easily obtain
\begin{equation*}
\int_{\mathrm{\Omega}} |\nabla f|dxdy  = \sup_{\substack{p\in{C_c^1}(\mathrm{\Omega},\mathbb{R}^{n})\\ \|p\|_\infty\leq 1}} \int_{\rm\Omega} f\div p dxdy = \sup_{\substack{p\in{C_c^1}(\mathrm{\Omega},\mathbb{R}^{n})\\ \|p\|_\infty\leq 1}} \int_{\partial A} f p\cdot \nu d\mathcal{H}^1 = hPer(A,\mathrm{\Omega}),
\end{equation*}
where $\nu$ is the exterior normal to $\partial A$, $\mathcal{H}^1$ is the one-dimensional Hausdorff measure, and $Per(A,\mathrm{\Omega})$ denotes the perimeter of $A$ inside $\mathrm{\Omega}$. As can be seen, the integral of total variation is related to the intensity $h$ \cite{chambolle2010introduction}. Likewise, we show the integral of Weingarten map is independent of image intensity. More details can be found in Appendix A. Besides, we also extend the discussion to the multiphase piecewise constant function defined on $\mathrm{\Omega}$ with $C^2$ boundary; see Appendix B for details.
\end{remark}

Based on Lemma \ref{lemmaHf}, we can further prove that $f=\sum_{i=1}^n h_i\chi_i(x,y)$ is a minimizer of the proposed model \eqref{wHM} as long as $\lambda$ being small enough, which means our model can preserve image contrast and edges for image restoration.

\begin{thm}\label{Theoremcontrast}
Let $f= \sum_{i=1}^n h_i\chi_i(x,y)$ be a piecewise constant image defined on a rectangle ${\rm\Omega}=(-2R,2R)\times(-2R,2R)$. Then there exists a constant $C$ such that if $\lambda<C$, $f$ attains the infimum of the proposed model \eqref{wHM} inside the function set $\bf S$, that is
$E(f)={\inf}_{u\in{\bf S}}E(u)$.
\end{thm}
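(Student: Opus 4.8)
The plan is to adapt the contrast-preservation argument of Zhu and Chan \cite{zhu2012image}; the new ingredient is that the genuinely second-order part of the regularizer already controls steep competitors. Write $E(u)$ for the objective in \eqref{wtv2} and, for $u\in{\bf S}$, set $t:=-u'(R)$, so $t>2h/R>0$ by the definition of ${\bf S}$. By Lemma \ref{lemmaHf} the regularizing term of \eqref{wtv2} evaluated at $f$ equals $\int_{\rm{\Omega}}|W_f|\,dxdy=4\pi R$ (understood, as in the paragraph preceding that lemma, as the limit along smooth radial approximants), while the fidelity term vanishes; hence $E(f)=4\pi R$, and it suffices to prove $E(u)\ge 4\pi R$ for every $u\in{\bf S}$ once $\lambda$ is small enough.

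First I would record a lower bound for the second-order term that does not saturate as the slope blows up. Using the radial expressions for $u_{xx},u_{xy},u_{yy}$ from the proof of Lemma \ref{lemmaHf}, $|\nabla^2 u|=\sqrt{(u'')^2+(u'/r)^2}\ge|u''|$, hence
\[
\beta(u)\,|\nabla^2 u|\ \ge\ \frac{|u''|}{\sqrt{1+(u')^2}}\ =\ \Bigl|\frac{d}{dr}\,\mathrm{arcsinh}\bigl(u'(r)\bigr)\Bigr|.
\]
For $u\in{\bf S}$ the sign conditions $u''\le 0$ on $(0,R)$ and $u''\ge 0$ on $(R,2R)$ make $u'$ monotone on each of $[R_1,R]$ and $[R,R_2]$, with $u'(R_1)=u'(R_2)=0$ and $u'(R)=-t$. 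Passing to polar coordinates and bounding the weight $r$ from below by $R_1$ on $[R_1,R]$ and by $R$ on $[R,R_2]$ gives
\[
\int_{\rm{\Omega}}\beta(u)\,|\nabla^2 u|\,dx\ =\ 2\pi\int_0^{2R} r\,\beta(u)\,|\nabla^2 u|\,dr\ \ge\ 2\pi\,(R_1+R)\,\mathrm{arcsinh}(t)\ \ge\ 2\pi R\,\mathrm{arcsinh}(t).
\]

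Next I would split on the size of $t$. If $t\ge\sinh 2$, then $\mathrm{arcsinh}(t)\ge 2$ and the last display gives $E(u)\ge\int_{\rm{\Omega}}\beta(u)|\nabla^2 u|\,dx\ge 4\pi R=E(f)$, with no restriction on $\lambda$. If instead $2h/R<t<\sinh 2$, I discard the nonnegative regularizing term and invoke Lemma \ref{dataterm}:
\[
E(u)\ \ge\ \frac{1}{2\lambda}\int_{\rm{\Omega}}(u-f)^2\,dxdy\ \ge\ \frac{1}{2\lambda}\Bigl(-\frac{\pi h^3}{12\,u'(R)}R\Bigr)\ =\ \frac{\pi h^3 R}{24\lambda t}\ >\ \frac{\pi h^3 R}{24\lambda\,\sinh 2}.
\]
Setting $C:=h^3/(96\,\sinh 2)$, for any $\lambda<C$ the right-hand side exceeds $4\pi R=E(f)$. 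Combining the two cases, $E(u)\ge E(f)$ for all $u\in{\bf S}$ whenever $\lambda<C$, which is the claim; with $E(f)$ read as above, this is the same limiting sense in which the Euler elastica and mean curvature energies are said to be ``minimized'' by $f$.

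The real obstacle is seeing that the naive chain $E(u)\ge\int_{\rm{\Omega}}|W_u|\,dx+\frac{1}{2\lambda}\int_{\rm{\Omega}}(u-f)^2\,dx$ coming from the Minkowski and Cauchy--Schwarz inequalities is insufficient: the lower bound on $\int_{\rm{\Omega}}|W_u|\,dx$ furnished by Lemma \ref{lemmaHf} only reaches $4\pi R$ as $t\to\infty$ (and carries a negative first-order correction), so it cannot on its own beat $E(f)=4\pi R$; one must instead use that the second-order term $\int_{\rm{\Omega}}\beta(u)|\nabla^2 u|\,dx\ge 2\pi R\,\mathrm{arcsinh}(t)$ is unbounded in $t$. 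After that, the remaining care is purely in the bookkeeping above --- verifying monotonicity of $u'$ on the two subintervals (so that the total variation of $\mathrm{arcsinh}(u'(\cdot))$ there is exactly $\mathrm{arcsinh}(t)$), and checking that discarding $(u'/r)^2$ and lowering the weight $r$ to its infimum costs nothing essential, since only a bound growing with $t$ is needed --- together with the interpretation of $E(f)$ for the discontinuous $f$ fixed by the convention preceding Lemma \ref{lemmaHf}.
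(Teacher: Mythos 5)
Your argument is correct, but it takes a genuinely different route from the paper's. The paper lower-bounds the full regularizer through the Weingarten integral, $\int_{\rm{\Omega}}\alpha(u)|\nabla u|\,dx+\int_{\rm{\Omega}}\beta(u)|\nabla^2 u|\,dx\ge\int_{\rm{\Omega}}|W_u|\,dx\ge 4\pi R\,t/\sqrt{1+t^2}$ with $t=-u'(R)$, adds the fidelity bound of Lemma \ref{dataterm}, and then shows that the single function $y(x)=\frac{c_1x}{\sqrt{1+x^2}}+\frac{c_2}{\lambda x}$ with $c_1=4\pi R$, $c_2=\pi h^3R/24$ strictly decreases to $c_1$ on $[2h/R,+\infty)$ once $\lambda<\frac{c_2}{c_1}\frac{2h}{R}=\frac{h^4}{48R}$; there is no case split, and the resulting threshold depends on $R$. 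You instead extract from the second-order term alone the unbounded lower bound $2\pi R\,\mathrm{arcsinh}(t)$ and split at $t=\sinh 2$, using the regularizer for steep competitors and the fidelity for shallow ones, arriving at the $R$-independent threshold $h^3/(96\sinh 2)$; both constants are admissible since the theorem only asserts existence of some $C>0$. Your diagnosis of why the ``naive chain'' fails is exactly why the paper must carry the fidelity term for all $t$, and your route quietly repairs a weak link in the paper's version: passing from \eqref{Hginequality} to $\int_{\rm{\Omega}}|W_u|\,dx\ge -4\pi R\,u'(R)/\sqrt{1+(u'(R))^2}$ discards the nonpositive correction $4\pi\int_0^R u'/\sqrt{1+(u')^2}\,dr$ from the \emph{lower} bound, which is not a licit step as written and needs something like your $r\ge R_1$ weighting on $[R_1,R]$ to justify. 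The two proofs share the same residual looseness: $E(f)=4\pi R$ is assigned by convention (along the approximating sequence the model's own second-order term grows like $\mathrm{arcsinh}(t_n)$, so it is the Weingarten value, not $\lim_n E(u_n)$, that is being used), and neither argument exhibits competitors with $E(u)\to E(f)$, so strictly speaking both establish $E(f)\le\inf_{u\in{\bf S}}E(u)$ rather than equality; since that gap is inherited from the paper, it does not count against your proposal.
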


\begin{proof}\label{Proofcontrast}
As shown in Fig. \ref{piecewise-constant-function}(b), we can draw the tangent line $T(r)=u(R_i)+u'(R_i)(r-R_i)$ to any $u\in{\bf S}$ at the point $(R_i,u(R_i))$ in the subdomain $(R_i-\varepsilon,R_i+\varepsilon)$. Suppose the tangent line intersects $\hat{f}$ at point $(r_{in},h_i)$ and $(r_{out},h_{i+1})$, then we get $r_{in}=R_i+(h_i-u(R_i))/u'(R_i)$ and $r_{out}=R_i+(h_{i+1}-u(R_i))/u'(R_i)$ with $r_{in}\in(R_i-\varepsilon,R_i)$ and $r_{out}\in(R_{i},R_i+\varepsilon)$, respectively. Moreover, there is $\hat{f}(r)-u(r)\geq \hat{f}(r)-T(r)>0$ if $r\in(r_{in},R_i)$, and $u(r)-\hat{f}(r)\geq T(r)-\hat{f}(r)>0$ if $r\in(R_i,r_{out})$. Thus we obtain
\begin{align}\label{upbound}
  \int_{\mathrm{\Omega}}(u-f)^2dxdy & = \sum_{i=1}^{n}\Big(\int_{0}^{2\pi}d\theta \int_{R_i-\varepsilon}^{R_i+\varepsilon}(u(r)-\hat{f}(r))^2rdr\Big) \nonumber \\
  & \geq \sum_{i=1}^{n}2\pi \Big[\int_{r_{in}}^{R_i}(\hat{f}(r)-u(r))^2rdr+\int_{R_i}^{r_{out}}(\hat{f}(r)-u(r))^2rdr\Big] \nonumber \\
   & \geq \sum_{i=1}^{n}2\pi\Big[\int_{r_{in}}^{R_i}(h_i-T(r))^2rdr+\int_{R_i}^{r_{out}}(h_{i+1}-T(r))^2rdr\Big].
\end{align}
It is easy to calculate the following integrals
\begin{align*}
  &\int_{r_{in}}^{R_i}(h_{i}-T(r))^2rdr = \int_{r_{in}}^{R_i}(h_{i}-u(R_i)-u'(R_i)(r-R_i))^2rdr \\
  & = -\frac{1}{3u'(R_i)}\Big(r[h_{i}-u(R_i)-u'(R_i)(r-R_i)]^3|_{r=r_{in}}^{r=R_i}-\int_{r_{in}}^{R_i}(h_{i}-u(R_i) -u'(R_i)(r-R_i))^3dr\Big) \\
  & = -\frac{1}{3u'(R_i)}\Big(R_i(h_i-u(R_i))^3 + \frac{1}{4u'(R_i)}[h_{i}-u(R_i)-u'(R_i)(r-R_i)]^4|_{r=r_{in}}^{r=R_i}\Big)\\
  & = -\frac{1}{3u'(R_i)}R_i(h_i-u(R_i))^3-\frac{1}{12(u'(R_i))^2}(h_i-u(R_i))^4,
\end{align*}
and
\begin{equation*}
 \int_{R_i}^{r_{out}}(h_{i+1}-T(r))^2rdr = \frac{1}{3u'(R_i)}R_i(h_{i+1}-u(R_i))^3+\frac{1}{12(u'(R_i))^2}(h_{i+1}-u(R_i))^4.
\end{equation*}
Then it follows that
\begin{align*}
   &\int_{r_{in}}^{R_i}(h_{i}-T(r))^2rdr + \int_{R_i}^{r_{out}}(h_{i+1}-T(r))^2rdr \\
   & = \frac{1}{3u'(R_i)}R_i[(h_{i+1}-u(R_i))^3-(h_i-u(R_i))^3] +\frac{1}{12(u'(R_i))^2}[(h_{i+1}-u(R_i))^4-(h_i-u(R_i))^4]\\
   & \geq \frac{1}{3u'(R_i)}R_i \frac{(h_{i+1}-h_{i})^3}{4}-\frac{1}{12(u'(R_i))^2}(h_{i}-h_{i+1})^4 = -\frac{(h_{i}-h_{i+1})^3}{12u'(R_i)}\Big(R_i+\frac{h_{i}-h_{i+1}}{u'(R_i)}\Big).
\end{align*}
Since $u\in{\bf S}$, $u'(R_i)<-\frac{2h_i}{R_i}$, then $R_i+\frac{h_{i}-h_{i+1}}{u'(R_i)}>\frac{(h_{i}+h_{i+1})R_i}{2h_i}$. Thus, we obtain
\begin{equation}\label{databound}
 \int_{\mathrm{\Omega}}(u-f)^2dxdy \geq \sum_{i=1}^{n} -\frac{\pi(h_{i}-h_{i+1})^3(h_{i}+h_{i+1})}{12h_iu'(R_i)}R_i.
\end{equation}
Based on the formulas \eqref{Hginequality} and \eqref{databound}, we have
\begin{align}\label{Eg}
  E(u) &  =\int_{\mathrm{\Omega}}|W_u|_Fdxdy+\frac{1}{2\lambda}\int_{\mathrm{\Omega}}(u-f)^2dxdy \nonumber \\
  & > \sum_{i=1}^{n}\Big(-4\pi R_i\frac{u'(R_i)}{\sqrt{1+(u'(R_i))^2}} - \frac{\pi(h_{i}-h_{i+1})^3(h_{i}+h_{i+1})}{24h_i\lambda u'(R_i)}R_i\Big) \nonumber \\
  & = \sum_{i=1}^{n}\Big(4\pi R_i \frac{(-u'(R_i))}{\sqrt{1+(-u'(R_i))^2}} + \frac{\pi(h_{i}-h_{i+1})^3(h_{i}+h_{i+1})}{24h_i\lambda (-u'(R_i))}R_i\Big).
\end{align}
For each term in \eqref{Eg}, by defining $\ell=-u'(R_i)$, $c_1=4\pi R_i$ and $c_2=\frac{\pi(h_{i}-h_{i+1})^3(h_{i} + h_{i+1})R_i}{24h_i}$, we consider the function $\eta(\ell)=\frac{c_1\ell}{\sqrt{1+\ell^2}}+\frac{c_2}{\lambda \ell}$ defined on $[\frac{2h_i}{R_i},+\infty)$. Then there is
\begin{align*}
\eta'(\ell) = \frac{c_1}{({1+\ell^2})^{3/2}}-\frac{c_2}{\lambda \ell^2} \leq \frac{c_1}{\ell^3}(1-\frac{c_2}{\lambda c_1}\ell).
\end{align*}
If $\lambda<c_i=\frac{c_2}{c_1}\frac{2h_i}{R_i}$, $\eta'(\ell)<0$ for any $\ell\in [\frac{2h_i}{R_i},+\infty)$. One can see that $\lim_{\ell\rightarrow +\infty}\eta(\ell)=4\pi R_i$, which means $\eta(\ell)$ will strictly decrease to $4\pi R_i$ on $[\frac{2h_i}{R_i},+\infty)$.

Suppose $C=\min\{c_i~|~c_i=\frac{(h_{i}-h_{i+1})^3(h_{i}+h_{i+1})}{48R_i},~i=1,\ldots, n\}$. When $\lambda < C$ in the model \eqref{wHM}, $E(u)>\sum_{i=1}^{n}4\pi R_i=E(f)$ for any smooth function $u\in{\bf S}$. Moreover, for any small $\varepsilon>0$, one can easily find a smooth function $u\in{\bf S}$ satisfying $E(u)-\varepsilon <E(f)<E(u)$. Thus, we obtain $E(f)={\inf}_{u\in{\bf S}}E(u)$. This demonstrates that the proposed model can keep the image contrast when $\lambda$ is small enough. $\quad\blacksquare$
\end{proof}

\begin{remark}
This theorem indicates that the proposed model \eqref{wHM} can keep the image contrast once $\lambda$ is small enough. In contrast, according to \cite{strong2003edge}, the Rudin-Osher-Fetami model will lose image contrast no matter how small $\lambda$ is.
\end{remark}

\begin{remark}
The theorem also indicates that our proposed model \eqref{wHM}, similar to the Rudin-Osher-Fetami model, can keep sharp edges, which is another important property for image denoising.
\end{remark}

The image patches can be categorized into homogeneous regions, edges, corners and T-junctions  \cite{Chan2002Euler}. Thus, we turn to discuss whether our model can keep corners of objects. Considering a particular image $f=h{\chi_{\rm\Gamma}}(x,y)$ defined on a rectangle ${\mathrm{\Omega}}=(-R,R)\times(-R,R)$ with ${\rm\Gamma}=(0,R)\times(0,R)$, we calculate the integral $\int_{\mathrm{\Omega}}|W_f|_Fdxdy$ to prove the Weingarten map can preserve corners through the following lemma.

\begin{lem}\label{wfclem}
Let $f=h{\chi_{\rm\Gamma}}(x,y)$ be a sharp image defined on ${\mathrm{\Omega}}=(-R,R)\times(-R,R)$ with ${\rm\Gamma}=(0,R)\times(0,R)$. Then we obtain
\begin{equation}\label{Wf}
 \int_{\mathrm{\Omega}}|W_f|_Fdxdy = 4R.
\end{equation}
\end{lem}

\begin{proof}
Similarly, we introduce a sequence of smooth functions $\{u_n\}$ to approximate $f$. First, we consider a function set $\bf P$ defined as follows
\begin{align*}
  {\bf P}  = \Big\{ \rho \in {\bf C}^2(\mathbb{R}):&\rho(x)=0 ~{\mathrm {if}}~ x<-1, \rho(x)=1 ~{\mathrm {if}}~ x>1; \\
          & \rho'' \geq 0~{\mathrm {in}}~(-1,0), \rho'' \leq 0~{\mathrm {in}}~(0,1); ~{\mathrm {and}}~1 \leq \rho'(0) \leq 2 \Big\}
\end{align*}
and define $\zeta_{\rho,\epsilon}(x,y)$ in terms of $\rho$ through
\begin{eqnarray}\label{zeta}
\zeta_{\rho,\epsilon}(x,y) =
\left \{
\begin{split}
&h\rho(\frac{2y}{\epsilon}),~~~~~~~(x,y)\in[\epsilon,R)\times(-R,R),\\
&h\rho(\frac{2x}{\epsilon}),~~~~~~~(x,y)\in(-R,\epsilon)\times[\epsilon,R),\\
&h\rho(2-\frac{2r}{\epsilon}),~~(x,y)\in(-R,\epsilon)\times(-R,\epsilon),\\
\end{split}
\right.
\end{eqnarray}
with $r=\sqrt{(x-\epsilon)^2+(y-\epsilon)^2}$. Moreover, with the function $\zeta_{\rho,\epsilon}$, we define a function set $\bf Q$ by
\begin{equation*}
 {\bf Q} = \Big\{ \zeta_{\rho,\epsilon}: \rho \in {\bf P}, \epsilon\in(0,\frac{R}{2}) \Big\}.
\end{equation*}
Let $u\in{\bf Q}$, then there exists $\rho\in{\bf P}$ and a small enough $\epsilon$ such that $u=\zeta_{\rho,\epsilon}$. Thus we can construct a convenient sequence of smooth functions $\{u_n\}$ to approximate the surface of $f$. The constructed surface $z=\zeta_{\rho,\epsilon}(x,y)$ will be sufficiently sharp around the edges $\{x=0,y\in[\epsilon,R)\}$, $\{y=0,x\in[\epsilon,R)\}$ and the corner $(0,0)$.

In particular, we can calculate the Weingarten map on image surface $z=\zeta_{\rho,\epsilon}(x,y)$ as follows
\begin{eqnarray}\label{wu}
{W_u} =
\left \{
\begin{split}
&\Big(\frac{\frac{2h}{\epsilon}\rho'(\frac{2y}{\epsilon})}{\sqrt{1+[\frac{2h}{\epsilon}\rho'(\frac{2y}{\epsilon})]^2}}\Big)_y,~~~~(x,y)\in[\epsilon,R)\times(-R,R),\\
&\Big(\frac{\frac{2h}{\epsilon}\rho'(\frac{2x}{\epsilon})}{\sqrt{1+[\frac{2h}{\epsilon}\rho'(\frac{2x}{\epsilon})]^2}}\Big)_x,~~~~(x,y)\in(-R,\epsilon)\times[\epsilon,R),\\
&\sqrt{\Big(\frac{\frac{-2h}{\epsilon}\rho'(2-\frac{2r}{\epsilon})}{\sqrt{1+[\frac{-2h}{\epsilon}\rho'(2-\frac{2r}{\epsilon})]^2}}\Big)_r^2+ \Big(\frac{\frac{-2h}{\epsilon}\rho'(2-\frac{2r}{\epsilon})}{r\sqrt{1+[\frac{-2h}{\epsilon}\rho'(2-\frac{2r}{\epsilon})]^2}}\Big)^2},~(x,y)\in(-R,\epsilon)\times(-R,\epsilon),\\
\end{split}
\right.
\end{eqnarray}
where $A_\varrho=\frac{dA}{d\varrho}$.

Due to $\rho''\geq0$ for $(x,y)\in[\epsilon,R)\times(-R,0)$ and $\rho''\leq0$ for $(x,y)\in[\epsilon,R)\times(0,R)$, it follows that
\begin{equation*}
  \int_{[\epsilon,R)\times(-R,R)}|W_u|_Fdxdy =  \int_\epsilon^R\Big[\int_{-R}^0 W_udy - \int_{0}^R W_udy\Big]dx = \frac{\frac{2h}{\epsilon}\rho'(0)}{\sqrt{1+[\frac{2h}{\epsilon}\rho'(0)]^2}}2(R-\epsilon).
\end{equation*}
And $\rho''\geq0$ for $(x,y)\in(-R,0)\times[\epsilon,R)$ and $\rho''\leq0$ for $(x,y)\in(0,\epsilon)\times[\epsilon,R)$, thus
\begin{equation*}
  \int_{(-R,\epsilon)\times[\epsilon,R)}|W_u|_Fdxdy =  \frac{\frac{2h}{\epsilon}\rho'(0)}{\sqrt{1+[\frac{2h}{\epsilon}\rho'(0)]^2}}2(R-\epsilon).
\end{equation*}
Similar to the inequalities \eqref{Hginequality}, one obtains
\begin{align*}
  \frac{\frac{2h}{\epsilon}\rho'(0)}{\sqrt{1+[\frac{2h}{\epsilon}\rho'(0)]^2}}\pi\epsilon - \pi\int_{0}^\epsilon \frac{\frac{2h}{\epsilon}\rho'(2-\frac{2r}{\epsilon})}{\sqrt{1+[\frac{2h}{\epsilon}\rho'(2-\frac{2r}{\epsilon})]^2}}dr & \leq  \int_{(-R,\epsilon)\times(-R,\epsilon)}|W_u|_Fdxdy \\
   & \leq \frac{\frac{2h}{\epsilon}\rho'(0)}{\sqrt{1+[\frac{2h}{\epsilon}\rho'(0)]^2}}\pi\epsilon + \pi\int_\epsilon^R \frac{\frac{2h}{\epsilon}\rho'(2-\frac{2r}{\epsilon})}{\sqrt{1+[\frac{2h}{\epsilon}\rho'(2-\frac{2r}{\epsilon})]^2}}dr.
\end{align*}
Let $\{u_n=\zeta_{\rho_n,\epsilon_n}\}\subset{\bf Q}$ being any sequence of functions that approximate $f=h{\chi_{\rm\Gamma}}(x,y)$. It is obvious that $\frac{2h}{\epsilon_n}\rho'_n(0)\rightarrow\infty$ as $n\rightarrow\infty$. Then we obtain
\begin{align*}
 \lim_{n\rightarrow\infty}\int_{\mathrm{\Omega}}|W_{u_n}|_Fdxdy &= \lim_{n\rightarrow\infty} \Big\{ \int_{[\epsilon_n,R)\times(-R,R)}|W_{u_n}|_Fdxdy +\int_{(-R,\epsilon_n)\times[\epsilon_n,R)}|W_{u_n}|_Fdxdy \\
  &+\int_{(-R,\epsilon_n)\times(-R,\epsilon_n)}|W_{u_n}|_Fdxdy \Big\} = 4R.
\end{align*}
Therefore, we have $\int_{\mathrm{\Omega}}|W_{f}|_Fdxdy=\lim_{n\rightarrow\infty}\int_{\mathrm{\Omega}}|W_{u_n}|_Fdxdy =4R$, that completes the proof.  $\quad\blacksquare$
\end{proof}

This lemma also illustrates that the integral $\int_{\mathrm{\Omega}}|W_{f}|_Fdxdy$ does not rely on the image intensity $h$. Then, we show our model \eqref{wHM} can preserve the corner of the image $f=h{\chi_{\rm\Gamma}}(x,y)$ followed the same procedure as before.

\begin{thm}\label{wfcthm}
Let $f=h{\chi_{\rm\Gamma}}(x,y)$ be an image defined on a rectangle ${\mathrm{\Omega}}=(-R,R)\times(-R,R)$ with ${\rm\Gamma}=(0,R)\times(0,R)$. Then there exists a constant $C$ such that if $\lambda<C$, $f$ attains the infimum of the proposed model \eqref{wHM}, i.e., $E(f)=\inf_{u\in {\bf Q}}E(u)$.
\end{thm}

\begin{proof}
Based on Lemma \ref{wfclem} and Theorem 2.4 in \cite{zhu2012image}, we have
\begin{align}\label{Eu}
E(u)&=\int_{\mathrm{\Omega}}\bigg|\nabla \frac{1}{\sqrt{1+|\nabla u|^2}}\otimes\nabla u+ \frac{1}{\sqrt{1+|\nabla u|^2}}\nabla^2 u\bigg|_Fdxdy+\frac{1}{2\lambda}\int_{\mathrm{\Omega}}(u-f)^2dxdy  \nonumber \\
& > \frac{(\frac{2h}{\epsilon})\rho'(0)}{\sqrt{1+[(\frac{2h}{\epsilon})\rho'(0)]^2}}[4(R-\epsilon)+\pi\epsilon] +\frac{1}{2\lambda}\Big(\frac{h^3}{6(\frac{2h}{\epsilon})\rho'(0)}(R-\epsilon)+
\frac{\pi h^3}{48(\frac{2h}{\epsilon})\rho'(0)}\epsilon \Big).
\end{align}
Setting $\tau=2h\rho'(0)$ for the above inequality \eqref{Eu}, it follows that
\begin{equation*}
  \eta(\epsilon)=\frac{(\frac{\tau}{\epsilon})}{\sqrt{1+(\frac{\tau}{\epsilon})^2}}[4(R-\epsilon)+\pi\epsilon]+ \frac{h^3}{12\lambda(\frac{\tau}{\epsilon})}(R-\epsilon)+\frac{\pi h^3}{96\lambda(\frac{\tau}{\epsilon})}\epsilon.
\end{equation*}
It is obvious that $\eta(\epsilon)$ goes to $4R=E(f)=\int_{\mathrm{\Omega}}|W_{f}|_Fdxdy$ as $\epsilon\rightarrow 0$. Moreover, by $\epsilon\in(0,\frac{R}{2})$,
\begin{align*}
  \eta'(\epsilon)&=-\frac{\tau\epsilon}{(\epsilon^2+\tau^2)^{3/2}}[4(R-\epsilon)+\pi\epsilon]+\frac{\tau}{(\epsilon^2+\tau^2)^{1/2}}(\pi-4) +\frac{h^3}{12\lambda\tau}(R-2\epsilon)+\frac{\pi h^3}{48\lambda\tau}\epsilon \\
  &\geq - \frac{1}{2\sqrt{\epsilon^2+\tau^2}}[4(R-\epsilon)+\pi\epsilon] +\frac{\tau}{\sqrt{\epsilon^2+\tau^2}}(\pi-4)+\frac{h^3}{12\lambda\tau}(R-2\epsilon+\frac{\pi}{4}\epsilon) \\
  &>-\frac{1}{2\tau}[4(R-\epsilon)+\pi\epsilon]+\pi-4+\frac{h^3}{12\lambda\tau}\frac{\pi R}{8}\\
  &>-\frac{2R}{\tau}+\pi-4+\frac{h^3}{12\lambda\tau}\frac{\pi R}{8},
\end{align*}
thus we choose
\begin{equation}\label{lambda2}
  \lambda<\frac{\pi R h^3}{192R+(4-\pi)192h\rho'(0)}
\end{equation}
to satisfy $\eta'(\epsilon)>0$. Note that $1\leq\rho'(0)\leq2$, so we can set $C=\frac{\pi R h^3}{192R+(4-\pi)384h}$. If $\lambda<C$, for any $\rho\in{\bf P}$, $E(\zeta_{\rho,\epsilon})$ will decrease to $E(f)$ as $\epsilon\rightarrow 0$. Therefore, we obtain $E(u)>E(f)$ for any $u\in {\bf Q}$.

Moreover, for any small $\epsilon>0$, we can find a smooth function $u\in {\bf Q}$ such that $E(f)>E(u)-\epsilon$. That verifies $E(f)=\inf_{u\in{\bf Q}}E(u)$.  $\quad\blacksquare$
\end{proof}

\begin{remark}
This theorem denotes that our model \eqref{wHM} can preserve corners as long as the tuning parameter $\lambda$ is small enough. This is another important feature of our model.
\end{remark}
\begin{remark}
Similar to \cite{zhu2012image}, the discussion in Theorem \ref{Theoremcontrast} and Theorem \ref{wfcthm} hold for a small class of functions with $C^2$ boundaries. In fact, a thorough analysis of the Weingarten map minimization model \eqref{wHM} needs to be considered in an appropriate function space such as $\mathrm{BV}(\mathrm\Omega)$, which is remained as our future work for exploration.
\end{remark}

\section{The ADMM algorithm for Weingarten map minimization}
\label{sec3}

Although the Weingarten map minimization model \eqref{wHM} has very good geometric features, it involves high order derivatives, which result in the difficulties in developing effective and efficient algorithms for solving it numerically. Here, we first rewrite \eqref{wHM} in terms of $\nabla u$ and $\nabla^2 u$ as follows
\begin{equation}\label{coupleformula}
\min_u~\int_{\rm\Omega}\Big| \nabla^2 u\Big(\frac{(1+|\nabla u|^2)\mathcal{I}-\nabla u\otimes \nabla u}{(1+|\nabla u|^2)^\frac{3}{2}}\Big)\Big|_Fdx + \frac{1}{2\lambda}\int_{\rm\Omega}(u-f)^2dx,
\end{equation}
where $\mathcal{I}$ denotes the identity matrix. By introducing two auxiliary variables $v$ and $w$, we can reformulate the above minimization problem into the following equivalent constrained problem
\begin{equation}\label{constrainedproblem}
\begin{split}
& \min_{(u,v,w)\in V\times Q_1\times Q_2}~\int_{\mathrm{\Omega}}\Big|w\Big(\frac{(1+|v|^2)\mathcal{I}-v\otimes v}{(1+|v|^2)^\frac{3}{2}}\Big)\Big|_Fdx+\frac{1}{2\lambda}\int_{\mathrm{\Omega}}(u-f)^2dx \\
& \qquad\quad\mathrm{s.t.}\qquad~~ v=\nabla u,~ w={\nabla}^2 u.
\end{split}
\end{equation}
The corresponding augmented Lagrangian functional can be defined as follows
\begin{align}\label{Lagranfunction}
 \mathcal{L}({u,v,w; \lambda_1,\lambda_2}) =&\int_{\mathrm{\Omega}}\Big|w\Big(\frac{(1+|v|^2)\mathcal{I}-v\otimes v}{(1+|v|^2)^\frac{3}{2}}\Big)\Big|_Fdx +\frac{1}{2\lambda}\int_{\mathrm{\Omega}}(u-f)^2dx -\int_{\mathrm{\Omega}}\lambda_1(v-\nabla u) dx \nonumber\\
 & + \frac{r_1}{2}\int_{\mathrm{\Omega}}\big(v-\nabla u\big)^2dx - \int_{\mathrm{\Omega}}\lambda_2(w-{\nabla}^2 u)dx + \frac{r_2}{2}\int_{\mathrm{\Omega}}\big(w-{\nabla}^2 u\big)^2dx,
\end{align}
where $(\lambda_1$, $\lambda_2)\in Q_1\times Q_2$ are the Lagrange multipliers, and $r_1$, $r_2$ are the penalty parameters. Then we can exploit the ADMM to solve the above saddle-point problem by minimizing the primal variables $u$, $v$ and $w$ from
\begin{eqnarray*}\label{allsubproblems}
\left \{
\begin{split}
&\min_{u\in V}~\frac{1}{2\lambda}\int_{\mathrm{\Omega}}(u-f)^2dx+\frac{r_1}{2}\int_{\mathrm{\Omega}}\big(\nabla u-(v^k-\frac{\lambda_1^k}{r_1})\big)^2dx +\frac{r_2}{2}\int_{\mathrm{\Omega}}\big({\nabla}^2 u-(w^k-\frac{\lambda_2^k}{r_2})\big)^2dx,\\
&\min_{v\in{Q_1}}~\int_{\mathrm{\Omega}}\Big|w\Big(\frac{(1+|v|^2)\mathcal{I}-v\otimes v}{(1+|v|^2)^\frac{3}{2}}\Big)\Big|_Fdx +\frac{r_1}{2}\int_{\mathrm{\Omega}}\big(v-\nabla u^{k+1}-\frac{\lambda_1^k}{r_1}\big)^2dx,\\
&\min_{w\in{Q_2}}~\int_{\mathrm{\Omega}}\Big|w\Big(\frac{(1+|v|^2)\mathcal{I}-v\otimes v}{(1+|v|^2)^\frac{3}{2}}\Big)\Big|_Fdx +\frac{r_2}{2}\int_{\mathrm{\Omega}}\big(w-{\nabla}^2 u^{k+1}-\frac{\lambda_2^k}{r_2}\big)^2dx,
\end{split}
\right.
\end{eqnarray*}
and then update the multipliers $\lambda_1$, $\lambda_2$ by gradient ascent method.

\subsection{The solution to the $u$-subproblem}
Given the fixed variables $v^k,w^k,\lambda_1^k,\lambda_2^k$, we pursue the Euler-Lagrange equation of the $u$-subproblem as the following linear partial differential equation (PDE)
\begin{equation*}
\frac1{\lambda}(u^{k+1}-f)-r_1 \div \big(\nabla u^{k+1}-(v^k-\frac{\lambda_1^k}{r_1})\big)+r_2{\div}^2 \big({\nabla}^2 u^{k+1}-(w^k-\frac{\lambda_2^k}{r_2})\big)=0,
\end{equation*}
which can be simplified as
\begin{equation*}
\Big(\frac1{\lambda}-r_1\triangle+r_2\triangle^2 \Big)u^{k+1}=f/\lambda-\div(r_1v^{k}-\lambda_1^k) +{\div}^2(r_2w^{k}-\lambda_2^k)
\end{equation*}
with $\triangle^2=\div^2 {\nabla}^2$. As long as the periodic boundary condition is adopted, we can utilize FFT to achieve the optimal solution $u^{k+1}$ from
\begin{equation}\label{usolution}
u^{k+1} = \mathcal{F}^{-1} \bigg(\frac{\mathcal{F}\big(f/{\lambda}-\div(r_1v^k-\lambda_1^k)+{\div}^2  (r_2w^k-\lambda_2^k)\big)}{(1/{\lambda})\mathcal{I}-r_1\mathcal{F}\triangle\mathcal{F}^{-1}+r_2\mathcal{F}\triangle^2\mathcal{F}^{-1}} \bigg),
\end{equation}
where $\mathcal{F}$ and $\mathcal{F}^{-1}$ represent the commonly used forward and inverse FFT operation, respectively.

\subsection{The solution to the $v$-subproblem}
Because the $v$-subproblem is a nonlinear and non-convex minimization problem, we employ the gradient descent method to seek an approximated solution.
The Euler-Lagrange equation of the $v$-subproblem is given by
\begin{equation*}
{\rm\Phi}'(\Lambda)w^k\Big[-\frac{3}{(1+|v|^2)^\frac{3}{2}}({\mathrm{I}}-{\mathrm{\Psi}})v\Big]+r_1(v-\nabla u^{k+1})-\lambda^k_1=0, ~~\mbox{with}~~\Lambda=w^{k}\Big(\frac{(1+|v|^2)\mathcal{I}-v\otimes v}{(1+|v|^2)^\frac{3}{2}}\Big),
\end{equation*}
where  ${\rm\Phi}:\mathbb R^{2\times2}\rightarrow \mathbb R$ is defined as
\begin{eqnarray*}
{\rm\Phi}(x)=
\begin{cases}
\sqrt{x^2+\epsilon}, &x=0,\\
|x|_F, &x\neq 0,
\end{cases}
\end{eqnarray*}
with $\epsilon$ being a small positive constant, and $\mathrm{I}$, $\mathrm{\Psi}$: $\mathbb{R}^2\rightarrow \mathbb{R}^2$ are defined as $\mathrm{I}(x)=x$ and $\mathrm{\Psi}(x)=\frac{v\otimes v}{1+|v|^2}x$, respectively. Supposing that the periodic boundary condition is used, we can estimate $v^{k+1}$ according to the following fourth-order evolution equation with time as an evolution parameter
\begin{equation}\label{vsolution}
\frac{\partial v}{\partial t} = -{\rm\Phi}'(\Lambda)w^k\Big[-\frac{3}{(1+|v|^2)^\frac{3}{2}}(\mathrm{I}-\mathrm{\Psi})v\Big]+r_1(\nabla u^{k+1}-v)+\lambda^k_1.
\end{equation}

\subsection{The solution to the $w$-subproblem}
The $w$-subproblem is a typical $L_1$ minimization problem, which can be effectively solved using the shrinkage operator \cite{beck2009fast,bioucas2007new} as follows
\begin{equation}\label{wsolution}
 w^{k+1}={\mathrm{shrinkage}_F}\Big(\nabla^2 u^{k+1}+\frac{\lambda_2^k}{r_2},\frac{|(1+|v^{k+1}|^2)\mathcal{I}-v^{k+1}\otimes v^{k+1}|_F}{r_2(1+|v^{k+1}|^2)^\frac{3}{2}}\Big).
\end{equation}
Note that the shrinkage operator ${\mathrm{shrinkage}_F}(a,\xi)$ is implemented on each pixel over $\rm\Omega$ such as
\begin{eqnarray*}
{\mathrm{shrinkage}_F}(a,\xi)=\max\big(|a|_F-\xi,0\big)\frac{a}{|a|_F},
\end{eqnarray*}
where $a\in\mathbb R^{2\times2}$ is a $2\times 2$ matrix.

\subsection{Update of the Lagrange multipliers $(\lambda_1,\lambda_2)$}
The Lagrange multipliers $(\lambda_1,\lambda_2)$ are updated through a standard dual-ascent rule from
\begin{eqnarray}\label{lagranmul}
\begin{cases}
\lambda_1^{k+1}&=\lambda_1^k+r_1(\nabla u^{k+1}-v^{k+1}),\\
\lambda_2^{k+1}&=\lambda_2^k+r_2({\nabla}^2 u^{k+1}-w^{k+1}).
\end{cases}
\end{eqnarray}
Based on the above discussion on the solutions to each variable individually, we then summarize the iterative procedure for solving the Weingarten map minimization model \eqref{coupleformula} in Algorithm \ref{wmALM}.
\begin{algorithm}
    \caption{The ADMM for Weingarten map minimization model \eqref{wHM}}
    \begin{algorithmic}[1]\\
    \textbf{Input:} Degraded image $f$, regularization parameter $\lambda$, penalty factors $(r_1,r_2)$, time stepsize $\Delta t$, maximum iteration ${K_{\max}}$, and stopping threshold $\varepsilon$;\\
    \textbf{Initialize:} ${u^0} = f$ and $v^0 = w^0 = \lambda_1^0 = \lambda_2^0 = 0$, set $k=0$;
    \While {(not converged and $k \leq {K_{\max}}$)} \\
    ~~Compute $u^{k+1}$ from Eq. \eqref{usolution} for fixed $v^k$, $w^k$, $\lambda_1^k$ and $\lambda_2^k$;\\
    ~~Compute $v^{k+1}$ from Eq. \eqref{vsolution} for fixed $u^{k+1}$, $w^k$ and $\lambda_1^k$;\\
    ~~Compute $w^{k+1}$ from Eq. \eqref{wsolution} for fixed $u^{k+1}$, $v^{k+1}$ and $\lambda_2^k$;\\
    ~~Update $\lambda_1^{k+1}$ and $\lambda_2^{k+1}$ according to  \eqref{lagranmul};\\
    ~~Check convergence condition: $\|u^{k + 1}-u^k\|_{V,1}/|\mathrm{\Omega}| \leq \varepsilon;$
    \EndWhile
    \State \textbf{Output:} Reconstructed image $u$.
  \end{algorithmic}
  \label{wmALM}
\end{algorithm}

\section{Spatially adaptive first and second order regularization model}
\label{sec4}
In Algorithm 1, the $v$-subproblem needs to be solved by gradient descent, which converges slowly in practice. The numerical difficulty is mainly due to the coupling of the two terms in the Weingarten map. Thus, we consider minimizing the following energy functional
\begin{equation*}
F(u)=\int_{\mathrm{\Omega}}\Big|\nabla \frac{1}{\sqrt{1+|\nabla u|^2}}\otimes\nabla u\Big|_Fdx + \int_{\mathrm{\Omega}}\Big|\frac{1}{\sqrt{1+|\nabla u|^2}}\nabla^2 u\Big|_Fdx + \frac{1}{2\lambda}\int_{\mathrm{\Omega}}(u-f)^2dx.
\end{equation*}
Note that the following result can be obtained for Frobenius norm
\begin{lem}
If $a\in\mathbb R^{n}$ and $b\in\mathbb R^{n}$ are two vectors, then $|a\otimes b|_F = |a||b|$.
\end{lem}
Therefore, we can further reformulate the above functional into a hybrid nonlinear first and second order regularization problem as follows
\begin{equation}
\min_u~\int_{\mathrm{\Omega}}\alpha(u)|\nabla u|dx + \int_{\mathrm{\Omega}}\beta(u)|\nabla^2 u|_Fdx+\frac{1}{2\lambda}\int_{\mathrm{\Omega}}(u-f)^2dx,
\label{wtv2}
\end{equation}
with
\begin{align*}
\alpha(u)=\Big|\nabla \frac{1}{\sqrt{1+|\nabla u|^2}}\Big|,~~\mbox{and}~~\beta(u)=\frac{1}{\sqrt{1+|\nabla u|^2}},
\label{alpha_beta}
\end{align*}
where $\beta(u)$ is an edge detector function and $\alpha(u)$ is the total variation of $\beta(u)$.
As a matter of fact, similar or partial models have been studied in the literature. Chan, Marquina and Mulet \cite{chan2000high} introduced a nonlinear second order regularization to the TV functional for denoising problem. Specifically, the edge detection function $\beta(u)$ is coupled with the elliptic operator to eliminate the action of high order regularization on edges. Likewise, different edge detector functions have been introduced to first or second order variational models for various image processing problems \cite{bresson2007fast,Li2007,Zhang2013,Duan2016}. However, these works estimated the edge detector function using the observed images and treated it as the spatially adapted parameter for the regularization terms.
For example, Bresson \emph{et al.} \cite{bresson2007fast} introduced the edge detector function as the weights for TV term in image denoising. Li \emph{et al.} \cite{Li2007} developed a high order denoising model, where the edge detector function $g$ and $1-g$ were used as spatially adapted parameters for first and second order terms, respectively.

\subsection{The constrained optimization problem and ADMM-based algorithm}
The main computational challenges of the model \eqref{wtv2} come from the nonlinear terms $\alpha(u)$ and $\beta(u)$.  As explored for Euler's elastica model \cite{bae2011graph,yashtini2016fast}, the functional \eqref{wtv2} can be regarded as a weighted first and second order regularization model by computing $\alpha(u)$ and $\beta(u)$ separately in an iterative way. Then effective and efficient numerical algorithms can be used to solve the minimization problem such as augmented Lagrangian method \cite{wu2010augmented}, the split Bregman method \cite{goldstein2009split} and primal-dual splitting method \cite{Chambolle2010}, etc.

In particular, we introduce two auxiliary variables $v$ and $w$ and rewrite the original unconstrained optimization problem \eqref{wtv2} into a constrained version as follows
\begin{equation}\label{constrained version}
\begin{split}
& \min_{(u,v,w)\in V\times Q_1\times Q_2}~\int_{\mathrm{\Omega}}\alpha(x)|v|dx+\int_{\mathrm{\Omega}}\beta(x)|w|_Fdx+\frac{1}{2\lambda}\int_{\mathrm{\Omega}}(u-f)^2dx \\
& \qquad\quad\mathrm{s.t.}\qquad~~ v=\nabla u,~ w={\nabla}^2 u,
\end{split}
\end{equation}
where $\alpha(x)$ and $\beta(x)$ are evaluated in a separate step.
Given some $(u^k, v^k, w^k)\in V\times Q_1\times Q_2$, the augmented Lagrangian functional is defined as follows
\begin{align}\label{Lagrangian equation}
\begin{split}
 \mathcal{L}({u,v,w; \lambda_1,\lambda_2}) = &\int_{\mathrm{\Omega}}\alpha(x)|v|dx+\int_{\mathrm{\Omega}}\beta(x)|w|_Fdx+\frac{1}{2\lambda}\int_{\mathrm{\Omega}}(u-f)^2dx
 -\int_{\mathrm{\Omega}}\lambda_1(v-\nabla u) dx \\
 &+ \frac{r_1}{2}\int_{\mathrm{\Omega}}\big(v-\nabla u\big)^2dx - \int_{\mathrm{\Omega}}\lambda_2(w-{\nabla}^2 u)dx + \frac{r_2}{2}\int_{\mathrm{\Omega}}\big(w-{\nabla}^2 u\big)^2dx,\\
\end{split}
\end{align}
where $(\lambda_1$, $\lambda_2)\in Q_1\times Q_2$ are the Lagrange multipliers, and $r_1$, $r_2$ are the positive pently parameters. During each iteration, by the alternating direction method of multipliers, we tend to sequentially minimize \eqref{Lagrangian equation} over variables $(u,v,w)$ while keeping the reminder variables fixed. The minimizers $u^{k+1}$, $v^{k+1}$, $w^{k+1}$ are estimated from
\begin{eqnarray}\label{subproblems}
\left \{
\begin{split}
u^{k+1} &= \arg \min_{u\in V}~\frac{1}{2\lambda}\int_{\mathrm{\Omega}}(u-f)^2dx+\frac{r_1}{2}\int_{\mathrm{\Omega}}\big(\nabla u-(v^k-\frac{\lambda_1^k}{r_1})\big)^2dx+\frac{r_2}{2}\int_{\mathrm{\Omega}}\big({\nabla}^2 u-(w^k-\frac{\lambda_2^k}{r_2})\big)^2dx,\\
v^{k+1} &= \arg \min_{v\in Q_1}~\int_{\mathrm{\Omega}}\alpha(u^{k+1})|v|dx+\frac{r_1}{2}\int_{\mathrm{\Omega}}\big(v-\nabla u^{k+1}-\frac{\lambda_1^k}{r_1}\big)^2dx,\\
w^{k+1} &= \arg \min_{w\in Q_2}~\int_{\mathrm{\Omega}}\beta(u^{k+1})|w|_Fdx+\frac{r_2}{2}\int_{\mathrm{\Omega}}\big(w-{\nabla}^2 u^{k+1}-\frac{\lambda_2^k}{r_2}\big)^2dx,
\end{split}
\right.
\end{eqnarray}
and then the Lagrange multipliers $(\lambda_1,\lambda_2)$ are updated through a standard dual-ascent rule from
\begin{eqnarray}\label{lag_mul}
\begin{cases}
\lambda_1^{k+1}&=\lambda_1^k+r_1(\nabla u^{k+1}-v^{k+1}),\\
\lambda_2^{k+1}&=\lambda_2^k+r_2({\nabla}^2 u^{k+1}-w^{k+1}),
\end{cases}
\end{eqnarray}
where both $\alpha(u^{k+1})$ and $\beta(u^{k+1})$ are of known values as
\begin{equation}\label{updateparameters}
 \alpha(u^{k+1}) = \Big|\nabla {\frac{1}{\sqrt{1+|\nabla u^{k+1}|^2}}}\Big| ~~{\mathrm{and}}~~ \beta(u^{k+1}) = \frac{1}{\sqrt{1+|\nabla u^{k+1}|^2}}.
\end{equation}

\subsection{The solutions to subproblems}
\subsubsection{The sub-minimization problem w.r.t. $u$}
With the fixed variables $v^k,w^k,\lambda_1^k,\lambda_2^k$ at the $(k+1)$-th outer iteration, the Euler-Lagrange equation of the $u$-subproblem is given by
\begin{equation*}
\frac1{\lambda}(u^{k+1}-f)-r_1 \div \big(\nabla u^{k+1}-(v^k-\frac{\lambda_1^k}{r_1})\big)+r_2{\div}^2 \big({\nabla}^2 u^{k+1}-(w^k-\frac{\lambda_2^k}{r_2})\big)=0,
\end{equation*}
which can be simplified as
\begin{equation*}
\Big(\frac{1}{\lambda}-r_1\triangle+r_2\triangle^2\Big)u^{k+1}=f/\lambda-\div(r_1v^{k}-\lambda_1^k)+{\div}^2(r_2w^{k}-\lambda_2^k).
\end{equation*}
Suppose the periodic boundary condition is imposed, we can use the FFT to obtain the optimal solution $u^{k+1}$ from
\begin{equation}\label{u-subproblem}
u^{k+1} = \mathcal{F}^{-1} \bigg(\frac{\mathcal{F}\big(f/{\lambda}-\div(r_1v^k-\lambda_1^k)+{\div}^2  (r_2w^k-\lambda_2^k)\big)}{(1/{\lambda})\mathcal{I}-r_1\mathcal{F}\triangle\mathcal{F}^{-1}+r_2\mathcal{F}\triangle^2\mathcal{F}^{-1}} \bigg).
\end{equation}

\subsubsection{The sub-minimization problem w.r.t. $(v,w)$}
Both the $v$-subproblem and $w$-subproblem in \eqref{subproblems} are component-wise separable, which can be solved by shrinkage operators. To be specific, the solution to the variable $v$ is obtained by the isotropic shrinkage operator defined for vectors
\begin{equation}\label{v-subproblem}
v^{k+1}= {\mathrm{shrinkage}_2}\bigg( \nabla u^{k+1}+\frac{\lambda_1^k}{r_1},\frac{\alpha(u^{k+1})}{r_1} \bigg)
\end{equation}
with
\[{\mathrm{shrinkage}_2}(b,\xi)={\mathrm{max}}\{|b|-\xi,0\} \frac{b}{|b|},\quad \mbox{for}~ b\in \mathbb R^n.\]
Likewise, we have the solution to the $w$-subproblem as follows
\begin{equation}\label{w-subproblem}
w^{k+1}= {\mathrm{shrinkage}_F}\bigg( {\nabla}^2 u^{k+1}+\frac{\lambda_2^k}{r_2},\frac{\beta(u^{k+1})}{r_2} \bigg).
\end{equation}

In brief, an efficient ADMM-based numerical algorithm is proposed to deal with the spatially adapted first and second order regularization model \eqref{wtv2}, the optimization procedure of which is sketched in Algorithm 2.

\begin{algorithm}
    \caption{The ADMM for spatially adapted first and second order regularization model \eqref{wtv2}}
    \begin{algorithmic}[1]\\
    \textbf{Input:} Degraded image $f$, positive constant $\lambda$, penalty factors $(r_1,r_2)$, maximum iteration ${K_{\max }}$, and stopping threshold $\varepsilon$;\\
    \textbf{Initialize:} ${u^0} = f$ and $v^0 = w^0 = \lambda_1^0 = \lambda_2^0 = 0$, set $k=0$;
    \While {(not converged and $k \leq {K_{\max}}$)} \\
    ~~Compute $u^{k+1}$ from Eq. \eqref{u-subproblem} for fixed $v^k$, $w^k$, $\lambda_1^k$ and $\lambda_2^k$;\\
    ~~Update $\alpha(u^{k+1})$ and $\beta(u^{k+1})$ using $u^{k+1}$ according to Eq. \eqref{updateparameters};\\
    ~~Compute $v^{k+1}$ from Eq. \eqref{v-subproblem} for fixed $u^{k+1}$ and $\lambda_1^k$;\\
    ~~Compute $w^{k+1}$ from Eq. \eqref{w-subproblem} for fixed $u^{k+1}$ and $\lambda_2^k$;\\
    ~~Update $\lambda_1^{k+1}$ and $\lambda_2^{k+1}$ according to  \eqref{lag_mul};\\
    ~~Check convergence condition: $\|u^{k + 1}-u^k\|_{V,1}/|\mathrm{\Omega}| \leq \varepsilon;$
    \EndWhile
    \State \textbf{Output:} Reconstructed image $u$.
  \end{algorithmic}
  \label{HSAVM}
\end{algorithm}

\section{Experimental results}
\label{sec5}
In this section, comprehensive experiments consisting of three parts, i.e., image denoising, image deblurring and image inpainting are implemented to verify the efficiency and superiority of the proposed Algorithm 1 for Weingarten map regularization model (denoted by WM) and Algorithm 2 for the reformulated spatially adapted first and second order variational model (denoted by SA-TV-TV$^2$). All numerical experiments are performed utilizing Matlab R2016a on a machine with 3.40GHz Intel(R) Core(TM) i7-6700 CPU and 32GB RAM.

In our work, the popular peak signal-to-noise ratio (PSNR) and structural similarity (SSIM) indexes \cite{wang2004image} are adopted to quantitatively evaluate the imaging performance under different image degradation conditions. In particular, the PSNR is defined as
\begin{equation}\label{psnr}
 {\mathrm {PSNR}}(u_0,u) = 10 {\mathrm {log}}{\frac{255^2}{\mathrm{MSE}}},
\end{equation}
and the SSIM is given as
\begin{equation}\label{ssim}
 {\mathrm {SSIM}}(u_0,u) = \frac{(2\mu_{u_0}\mu_u+c_1)(2\sigma_{u_0 u}+c_2)}{({\mu_{u_0}}^2+{\mu_{u}}^2+c_1)({\sigma_{u_0}}^2+{\sigma_{u}}^2+c_2)},
\end{equation}
where $u_0$ denotes the clear image, $u$ represents the recovery image, MSE indicates the mean square error of $u_0$ and $u$. The ${\mu_{u_0}}$ and ${\mu_{u}}$ express the local mean values of images $u_0$ and $u$, ${\sigma_{u_0}}$ and ${\sigma_{u}}$ signify the respective standard deviations, ${c_1}$ and ${c_2}$ are two constants to avoid instability for near zero denominator values, and ${\sigma_{u_0 u}}$ is the covariance value between images $u_0$ and $u$. Theoretically, higher PSNR and SSIM values normally indicate better performance in image reconstruction.

\begin{figure*}[t]
      \begin{center}
			\includegraphics[width=1.00\linewidth]{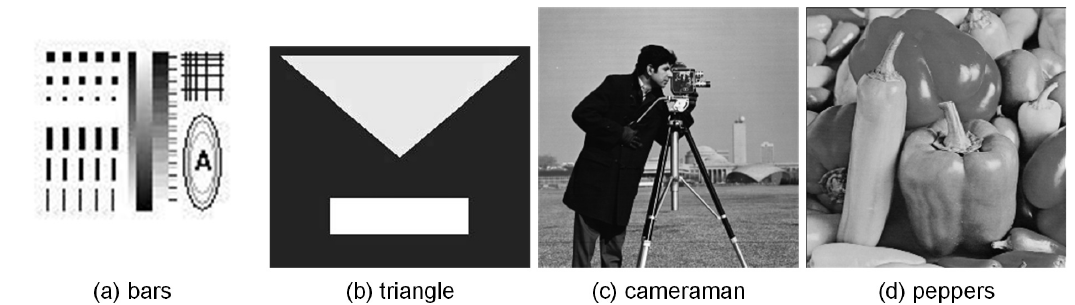}
	  \end{center}
	\caption{Test images. From left to right: (a) bars($128\times128$), (b) triangle($254\times214$), (c) cameraman($256\times256$), (d) peppers($256\times256$), respectively.}
	\label{testimages}
\end{figure*}

\begin{table*}[t]
	\centering
	\caption{\label{Parameters}The tunable parameters of comparative image reconstruction methods. Here, $\lambda^0$ indicates the initial value of $\lambda$, which is dynamically updated for the SATV model. }
	\begin{tabular}{c|c|c|c|c|c|c|c}
		\hline\hline
		 Methods & \multicolumn{3}{|c}{Model parameters} & \multicolumn{4}{|c}{Algorithm parameters} \\
        \hline
		 Euler's elastica \cite{tai2011fast} & $\eta$ & $a$ & $b$ & $r_1$ & $r_2$ & $-$ & $r_4$ \\ \hline
		 MC \cite{zhu2013augmented} & $\lambda$ & $-$ & $-$ & $r_1$ & $r_2$ & $r_3$ & $r_4$ \\ \hline
         TV-TV$^2$ \cite{papafitsoros2014combined} & $-$ & $\alpha$ & $\beta$ & $r_1$ & $r_2$ & $-$ & $-$ \\ \hline
         TGV \cite{bredies2010total} & $\lambda$ & $\alpha_0$ & $\alpha_1$ & $r_1$ & $r_2$ & $-$ & $-$ \\ \hline
		 SATV \cite{dong2011automated} & $\lambda^0$ & $-$ & $-$ & $\omega$ & $\zeta$ & $-$ & $-$ \\ \hline
		 WM & $\lambda$ & $-$ & $-$ & $r_1$ & $r_2$ & $-$ & $-$ \\ \hline
		 SA-TV-TV$^2$ & $\lambda$ & $-$ & $-$ & $r_1$ & $r_2$ & $-$ & $-$ \\
		\hline\hline
	\end{tabular}
\end{table*}

The variation of the relative residuals, the relative errors and numerical energy can provide important information about the numerical convergence of the proposed Algorithm 1 and Algorithm 2. Therefore, we track the relative residuals during the iterations, which is defined as
\begin{equation}\label{residual}
 (R_1^k,R_2^k) = \frac{1}{|\mathrm{\Omega}|}(\|v^k-\nabla u^k\|_{Q_1,1},\|w^k-{\nabla}^2 u^k\|_{Q_2,1}),
\end{equation}
where $\|\cdot\|_{Q_1,1}$, $\|\cdot\|_{Q_2,1}$ denote the $L^1$ norm in $Q_1$ and $Q_2$, respectively, and $|\mathrm{\Omega}|$ is the area of the image domain. Simultaneously, we check the relative errors of the Lagrange multipliers
\begin{equation}\label{multiplier}
 (L_1^k,L_2^k) = \frac{1}{|\mathrm{\Omega}|}(\|\lambda_1^k-\lambda_1^{k-1}\|_{Q_1,1},\|\lambda_2^k-\lambda_2^{k-1}\|_{Q_2,1})
\end{equation}
and the relative error in $u^k$
\begin{equation}\label{error}
R(u^k)=\frac{\|u^k-u^{k-1}\|_{V,1}}{|\mathrm{\Omega}|},
\end{equation}
where $\|\cdot\|_{V,1}$ is the $L^1$ norm defined in $V$. Besides, the numerical energy is calculated by
\begin{equation}\label{energy1}
E(u^k) =
\int_{\mathrm{\Omega}}\Big|\nabla^2 u^k\Big( \frac{(1+|\nabla u^k|^2)\mathcal{I}-\nabla u^k\otimes \nabla u^k}{(1+|\nabla u^k|^2)^\frac{3}{2}}\Big)\Big|_Fdx + \frac{1}{2\lambda}\int_{\mathrm{\Omega}}(u^k-f)^2dx
\end{equation}
for the Weingarten map minimization model \eqref{wHM}, and
\begin{equation}
F(u^k) =\int_{\mathrm{\Omega}}\alpha(u^k)|\nabla u^k|dx+\int_{\mathrm{\Omega}}\beta(u^k)|{\nabla}^2 u^k|_Fdx+\frac{1}{2\lambda}\int_{\mathrm{\Omega}}(u^k-f)^2dx
\label{energy2}
\end{equation}
for the spatially adapted first and second order regularization model \eqref{wtv2}, respectively.

\begin{figure*}[t]
      \begin{center}
			\includegraphics[width=1.00\linewidth]{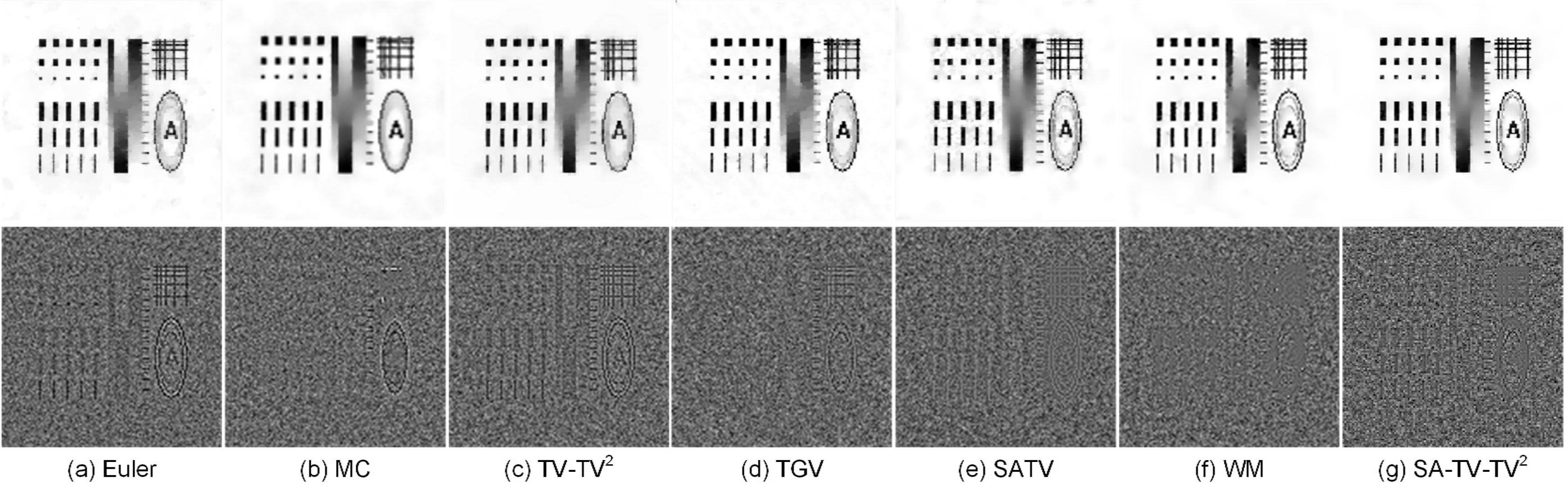}
	  \end{center}
	\caption{Denoising results of ``bars" (top) and their residual images (bottom) by different methods. The parameters are set as (a) Euler's elastica: $a=1$, $b=10$, $\eta=1.5\cdot10^2$, $r_1=1$, $r_2=2\cdot10^2$ and $r_4=5\cdot10^2$; (b) MC: $r_1=20$, $r_2=20$, $r_3=10^4$, $r_4=10^5$ and $\lambda=1.5\cdot10^3$; (c) TV-TV$^2$: $\alpha=10$, $\beta=5$, $r_1=1$ and $r_2=5$; (d) TGV: $\alpha_0=1.5$, $\alpha_1=1.0$, $r_1=10$, $r_2=50$ and $\lambda=4$; (e) SATV: $\omega=11$, $\zeta=2$ and $\lambda^0=2.0$; (f) WM: $r_1=0.1$, $r_2=0.5$, $\Delta t=0.1$ and $\lambda=200$; (g) SA-TV-TV$^2$: $r_1=0.1$, $r_2=0.5$ and $\lambda=160$.}
	\label{bars}
\end{figure*}

\begin{figure*}[t]
      \begin{center}
			\includegraphics[width=1.00\linewidth]{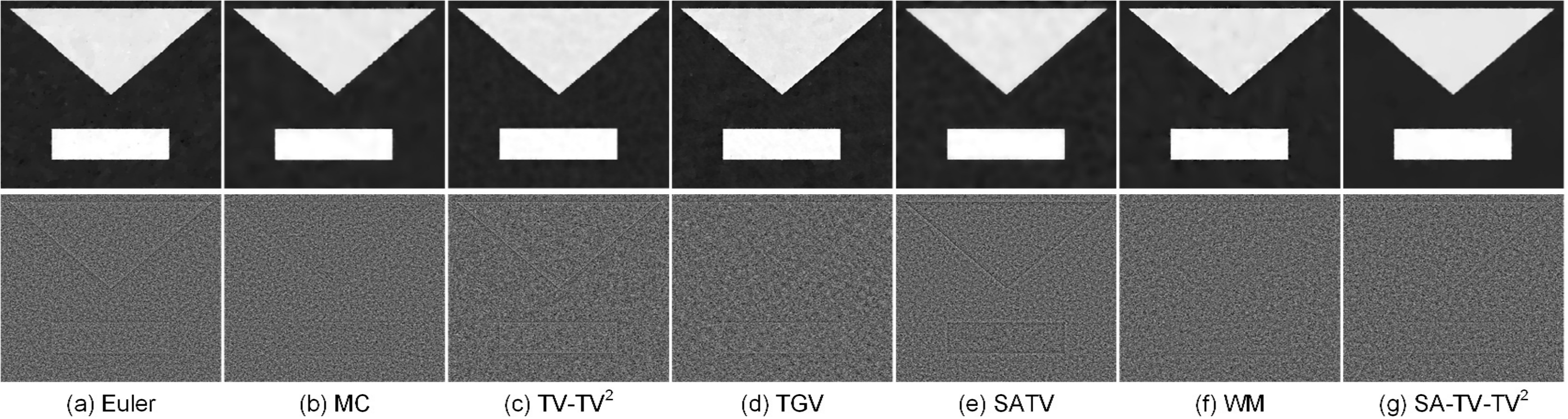}
	  \end{center}
	\caption{Denoising results of ``triangle" (top) and their residual images (bottom) by different methods. The parameters are set as (a) Euler's elastica: $a=1$, $b=10$, $\eta=1.5\cdot10^2$, $r_1=1$, $r_2=2\cdot10^2$ and $r_4=5\cdot10^2$; (b) MC: $r_1=20$, $r_2=20$, $r_3=10^5$, $r_4=10^5$ and $\lambda=1.5\cdot10^3$; (c) TV-TV$^2$: $\alpha=10$, $\beta=5$, $r_1=1$ and $r_2=5$; (d) TGV: $\alpha_0=1.5$, $\alpha_1=1.0$, $r_1=10$, $r_2=50$ and $\lambda=4$; (e) SATV: $\omega=11$, $\zeta=2$ and $\lambda^0=2.0$; (f) WM: $r_1=0.1$, $r_2=0.5$, $\Delta t=0.1$ and $\lambda=200$; (g) SA-TV-TV$^2$: $r_1=0.1$, $r_2=0.5$ and $\lambda=160$.}
	\label{triangle}
\end{figure*}

\begin{figure*}[t]
      \begin{center}
			\includegraphics[width=1.00\linewidth]{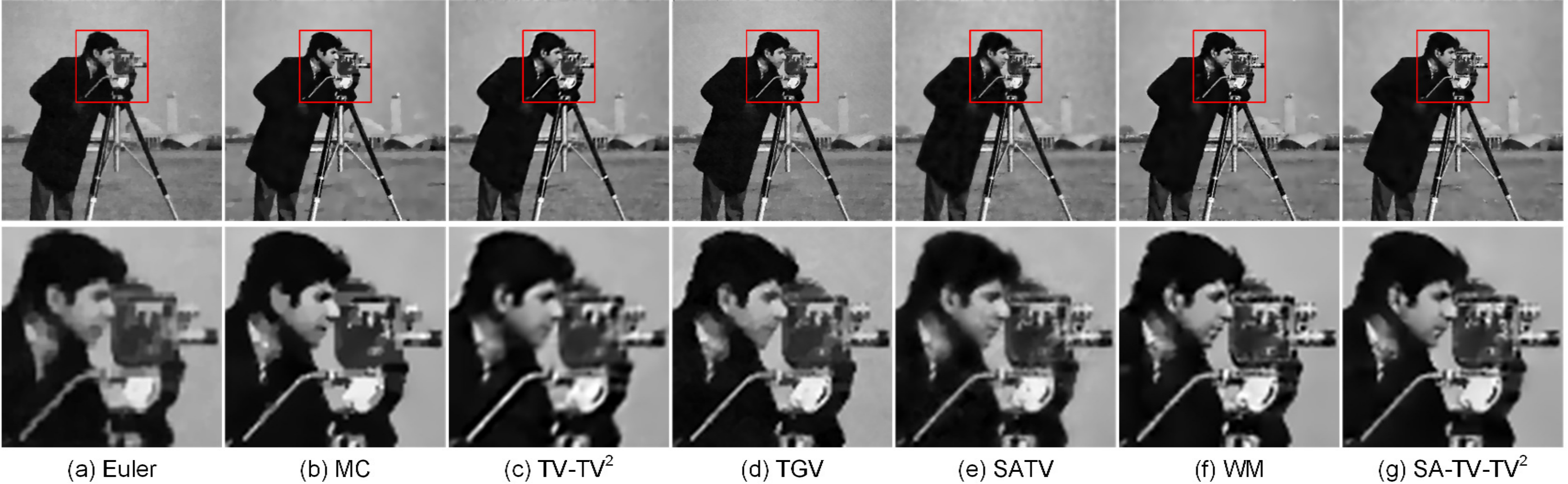}
	  \end{center}
	\caption{Denoising results of ``cameraman" (top) and their local magnification views (bottom) by different methods. The parameters are set as (a) Euler's elastica: $a=1$, $b=10$, $\eta=2\cdot10^2$, $r_1=1$, $r_2=2\cdot10^2$ and $r_4=5\cdot10^2$; (b) MC: $r_1=40$, $r_2=40$, $r_3=10^5$, $r_4=1.5\cdot10^5$ and $\lambda=10^2$; (c) TV-TV$^2$: $\alpha=4$, $\beta=8$, $r_1=10$ and $r_2=10$; (d) TGV: $\alpha_0=1.5$, $\alpha_1=1.0$, $r_1=10$, $r_2=50$ and $\lambda=10$; (e) SATV: $\omega=11$, $\zeta=2$ and $\lambda^0=2.5$; (f) WM: $r_1=1$, $r_2=2$, $\Delta t=0.01$ and $\lambda=90$; (g) SA-TV-TV$^2$: $r_1=1$, $r_2=2$ and $\lambda=100$.}
	\label{cameraman}
\end{figure*}

\begin{figure*}[t]
\centering
      \subfigure{
			\includegraphics[width=0.24\linewidth]{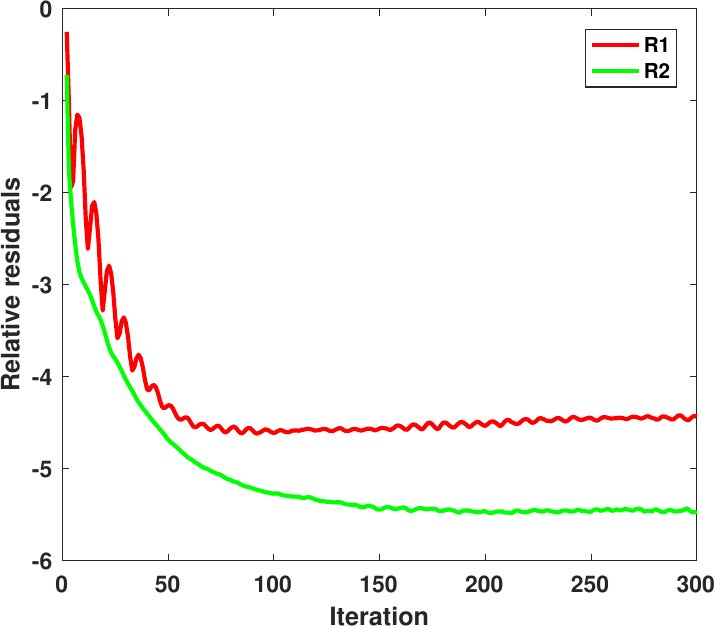}}\hspace{-0.5ex}
      \subfigure{
            \includegraphics[width=0.24\linewidth]{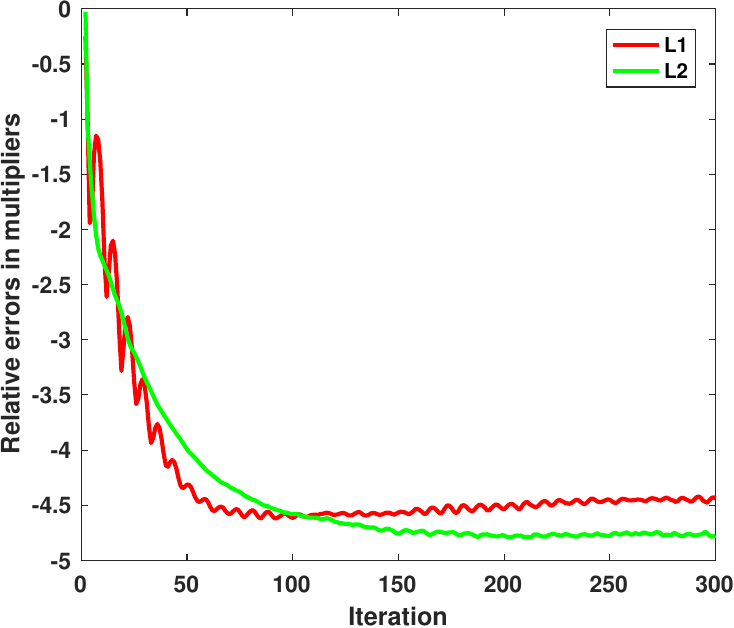}}\hspace{-0.5ex}
      \subfigure{
			\includegraphics[width=0.24\linewidth]{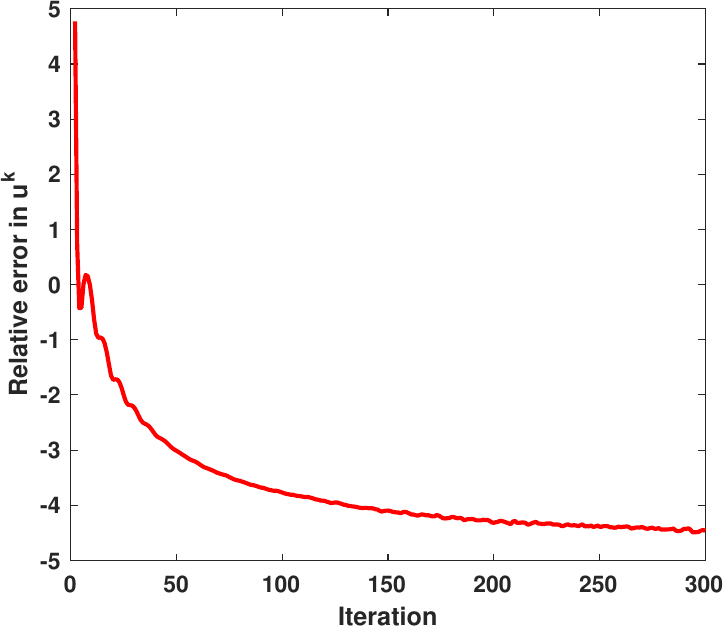}}\hspace{-0.5ex}
      \subfigure{
            \includegraphics[width=0.24\linewidth]{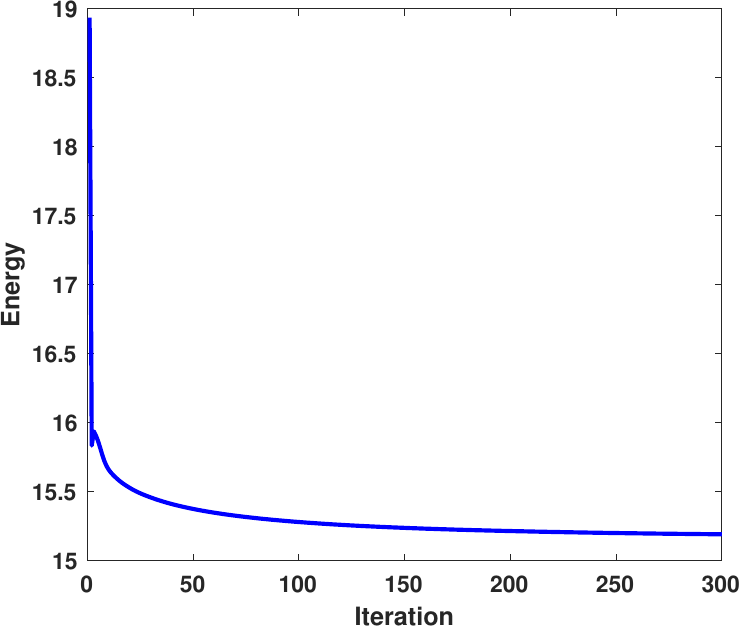}}
      \subfigure{
			\includegraphics[width=0.24\linewidth]{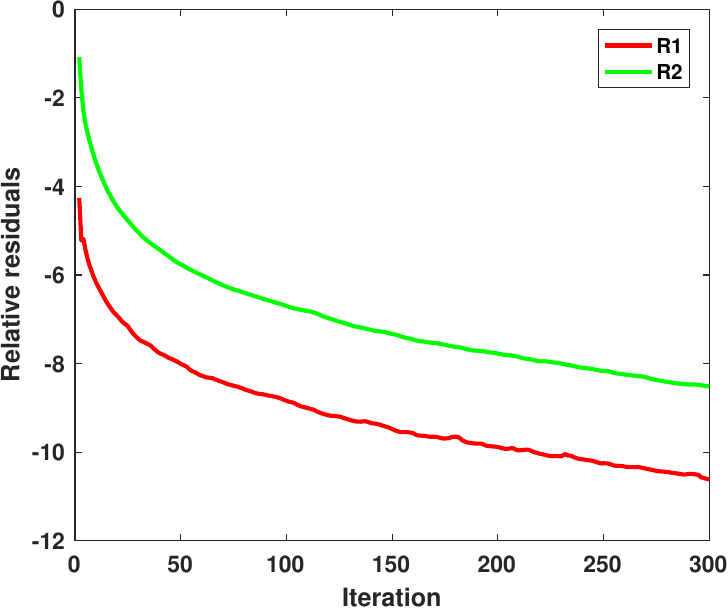}}\hspace{-0.5ex}
      \subfigure{
            \includegraphics[width=0.24\linewidth]{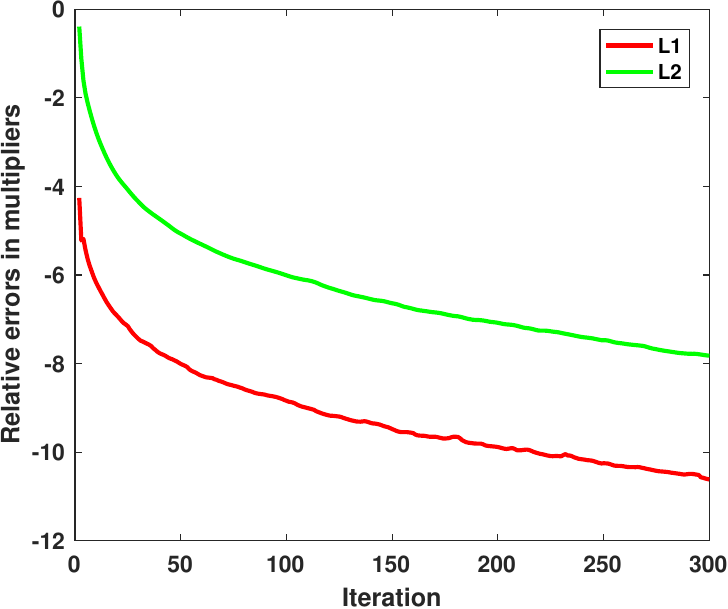}}\hspace{-0.5ex}
      \subfigure{
			\includegraphics[width=0.24\linewidth]{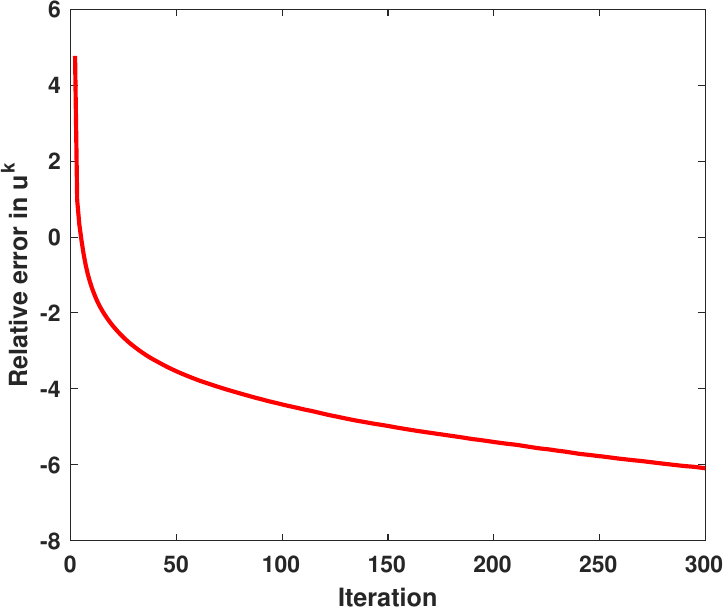}}\hspace{-0.5ex}
      \subfigure{
            \includegraphics[width=0.24\linewidth]{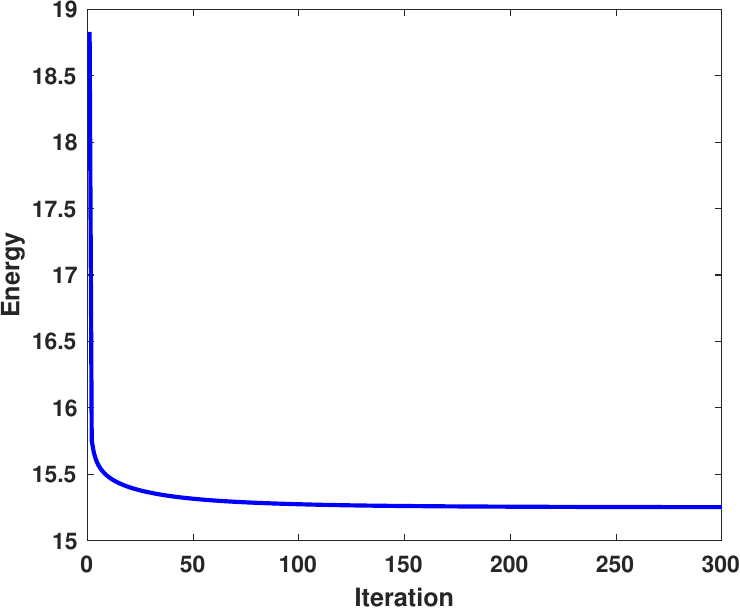}}
	\caption{Evaluations of ``cameraman" by the WM (top) and SA-TV-TV$^2$ (bottom) methods. From left to right: Relative residuals \eqref{residual}, relative errors in multipliers \eqref{multiplier}, relative errors in $u^k$ \eqref{error} and numerical energy \eqref{energy1} (top) and \eqref{energy2} (bottom), respectively.}
	\label{cameraman20}
\end{figure*}

\subsection{Numerical discretization}
Let ${\rm{\Omega}}=\{(i,j):0\leq i\leq m,0\leq j\leq n\}$ be the discretized image domain and $u(i,j)$ denote the intensity value of image $u$ at the pixel $(i,j)\in \rm \Omega$. We define the discrete forward $(+)$ and backward $(-)$ differential operators under periodic boundary condition as: $\partial_x^+u(i,j)=(u(i+1,j)-u(i,j))/\Delta x$, $\partial_y^+u(i,j)=(u(i,j+1)-u(i,j))/\Delta y$, $\partial_x^-u(i,j)=(u(i,j)-u(i-1,j))/\Delta x$, $\partial_y^-u(i,j)=(u(i,j)-u(i,j-1))/\Delta y$, where $\Delta x$ and $\Delta y$ denote the spatial mesh sizes. Then the discrete gradient operator $\nabla$: $\mathbb R^{m\times n}\rightarrow (\mathbb R^{m\times n})^2$ is given by $\nabla u(i,j) = \big(\partial_x^{+}u(i,j),\partial_y^{+}u(i,j)\big)$, and the discrete divergence operator div: $(\mathbb R^{m\times n})^2 \rightarrow \mathbb R^{m\times n}$ for $p=(p_1,p_2)\in(\mathbb R^{m\times n})^2$ is denoted as $\mathrm{div}p(i,j)=\partial_x^{-}p_1(i,j)+\partial_y^{-}p_2(i,j)$.

Correspondingly, based on periodic boundary condition, the discrete second order differential operators are further defined as
$\partial_{xx}^{-+}u(i,j)=\partial_{xx}^{+-}u(i,j)=\partial_x^-(\partial_x^+u(i,j)),~ \partial_{xy}^{++}u(i,j)=\partial_{yx}^{++}u(i,j)=\partial_x^+(\partial_y^+u(i,j)),~\partial_{xy}^{--}u(i,j)=\partial_{yx}^{--}u(i,j)=\partial_x^-(\partial_y^-u(i,j)),~
\partial_{yy}^{-+}u(i,j)=\partial_{yy}^{+-}u(i,j)=\partial_y^-(\partial_y^+u(i,j))$.
Therefore, the discrete Hessian operator $\nabla^2$: $\mathbb R^{m\times n}\rightarrow (\mathbb R^{m\times n})^4$ is denoted as
\[\nabla^2 u(i,j) = \begin{pmatrix} \partial_{xx}^{-+}u(i,j) & \partial_{xy}^{++}u(i,j) \\ \partial_{yx}^{++}u(i,j) & \partial_{yy}^{-+}u(i,j)\end{pmatrix}.\]
For $q=(q_{11},q_{12},q_{21},q_{22})\in(\mathbb R^{m\times n})^4$, the discrete second order divergence operator div$^2$: $(\mathbb R^{m\times n})^4 \rightarrow \mathbb R^{m\times n}$ is defined by
\begin{equation*}
\mathrm{div}^2q(i,j)=\partial_{xx}^{+-}q_{11}(i,j)+\partial_{xy}^{--}q_{12}(i,j)+\partial_{yx}^{--}q_{21}(i,j)+\partial_{yy}^{+-}q_{22}(i,j).
\end{equation*}

\begin{figure*}[t]
      \begin{center}
			\includegraphics[width=1.00\linewidth]{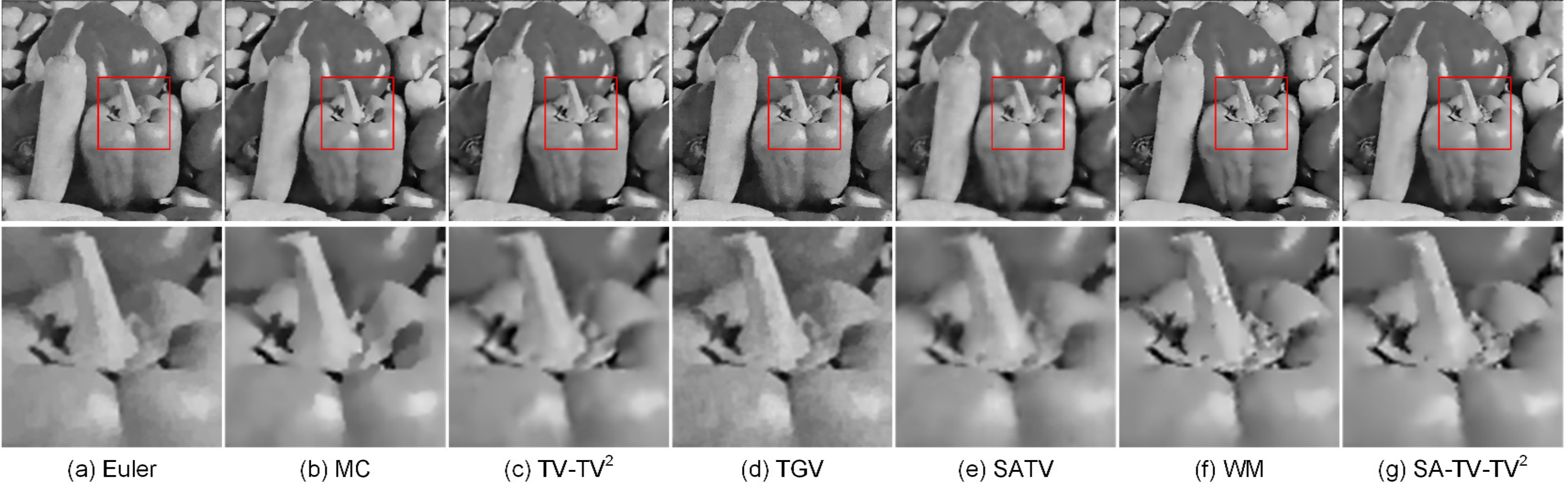}
	  \end{center}
	\caption{Denoising results of ``peppers" (top) and their local magnification views (bottom) by different methods. The parameters are set as (a) Euler's elastica: $a=1$, $b=10$, $\eta=2\cdot10^2$, $r_1=1$, $r_2=2\cdot10^2$ and $r_4=5\cdot10^2$; (b) MC: $r_1=40$, $r_2=40$, $r_3=10^5$, $r_4=10^5$ and $\lambda=10^2$; (c) TV-TV$^2$: $\alpha=4$, $\beta=10$, $r_1=10$ and $r_2=10$; (d) TGV: $\alpha_0=1.5$, $\alpha_1=1.0$, $r_1=10$, $r_2=50$ and $\lambda=10$; (e) SATV: $\omega=11$, $\zeta=2$ and $\lambda^0=2.5$; (f) WM: $r_1=1$, $r_2=2$, $\Delta t=0.01$ and $\lambda=90$; (g) SA-TV-TV$^2$: $r_1=0.1$, $r_2=0.5$ and $\lambda=100$.}
	\label{peppers}
\end{figure*}

\begin{table*}[t]
	\centering
	\caption{PSNR comparisons of various image denoising methods on test images for restoring noisy images corrupted by Gaussian noise with different standard deviation $\sigma$.}
	\begin{tabular}{c|c|c|c|c}
		\hline\hline
		 Methods & bars & triangle & cameraman & peppers \\
        \hline
		 Euler's elastica \cite{tai2011fast} & $24.83$ & $33.27$ & $28.05$ & $28.72$ \\ \hline
		 MC \cite{zhu2013augmented} & $26.35$ & $33.65$ & $28.40$ & $29.57$ \\ \hline
         TV-TV$^2$ \cite{papafitsoros2014combined} & $25.42$ & $33.06$ & $28.14$ & $28.50$ \\ \hline
         TGV \cite{bredies2010total} & $25.90$ & $33.41$ & $28.30$ & $29.17$ \\ \hline
		 SATV \cite{dong2011automated} & $25.64$ & $32.25$ & $28.28$ & $29.43$ \\ \hline
         WM & $27.01$ & $34.14$ & $29.13$ & $30.12$ \\ \hline
		 SA-TV-TV$^2$ & $\textbf{27.08}$ & $\textbf{34.20}$ & $\textbf{29.15}$ & $\textbf{30.28}$ \\
		\hline\hline
	\end{tabular}
\label{PSNR}
\end{table*}

\subsection{Parameters discussing and comparison methods}
There are three consistent parameters in the proposed Algorithm 1 and Algorithm 2, i.e., $\lambda$, $r_1$ and $r_2$. The regularization parameter $\lambda$ affects the contributions of the data-fidelity and regularization term, which should be selected according to the structures of the images and noise levels. The penalty parameters $r_1$ and $r_2$ control the convergent speed and stability of algorithms. To be specific, too small values of $r_1$ and $r_2$ usually reduce the algorithm's efficiency and relatively large values of $r_1$ and $r_2$ yield faster convergence. It is crucial to select appropriate penalty parameters $r_1$ and $r_2$ for balancing both algorithm's efficiency and stability. The time step size $\Delta t$ in Algorithm 1 is chosen as either $\Delta t = 0.1$ or $\Delta t = 0.01$ in different experiments. Similar to the mean curvature regularization \cite{zhu2013augmented}, the choice of spatial mesh sizes influences the reconstruction performance, which are set as $\Delta x =\Delta y = 5$ in the following experiments. 

We compare the proposed models with the most relevant methods including the Euler's elastica model (Euler) \cite{tai2011fast}, mean curvature (MC) \cite{zhu2013augmented}, hybrid first and second order model (TV-TV$^2$) \cite{papafitsoros2014combined}, the second order total generalized variation model (TGV) \cite{bredies2010total} and the spatially adapted TV method (SATV) \cite{dong2011automated}. The tunable parameters contained in different algorithms for comparison are listed in Table \ref{Parameters}. As can be seen, the proposed two methods not only contain fewer parameters, but also fewer subproblems in each iteration process.

\begin{figure}[t]
\centering
      \subfigure[tiangle]{
      \includegraphics[width=0.24\linewidth]{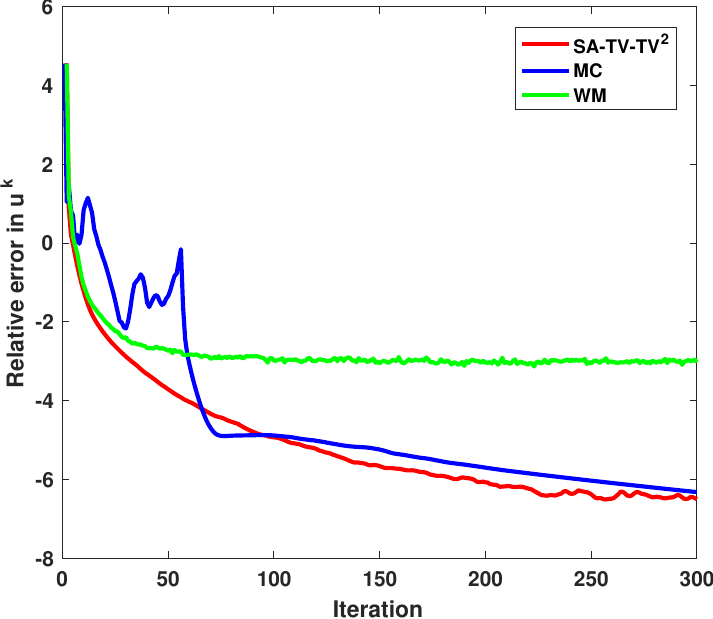}\hspace{0.5ex}
      \includegraphics[width=0.24\linewidth]{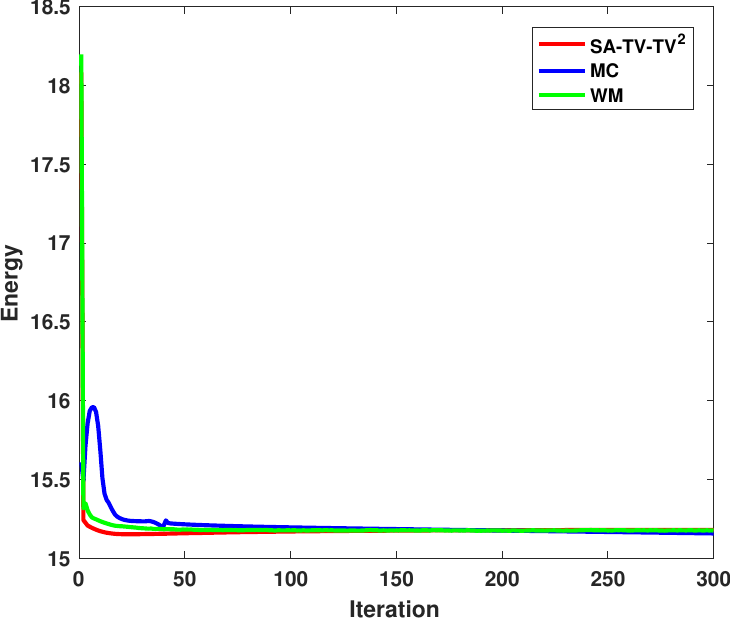}}
      \subfigure[peppers]{
	  \includegraphics[width=0.24\linewidth]{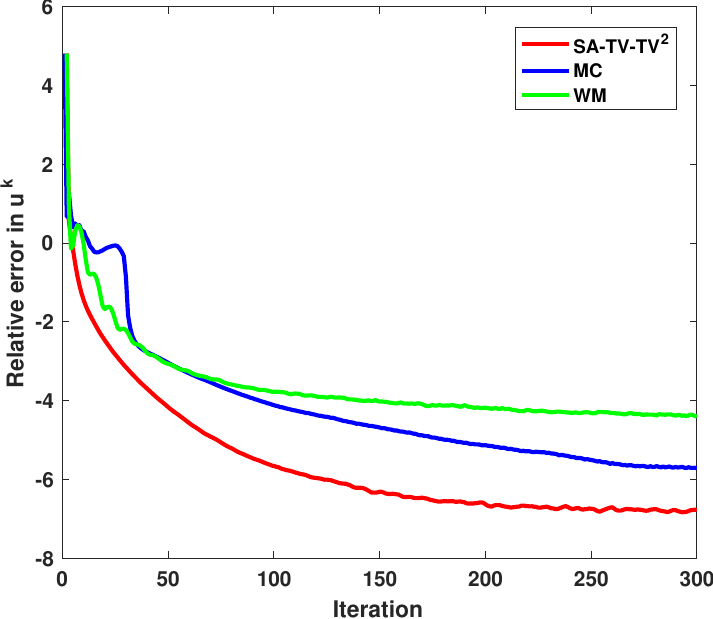}\hspace{0.5ex}
      \includegraphics[width=0.24\linewidth]{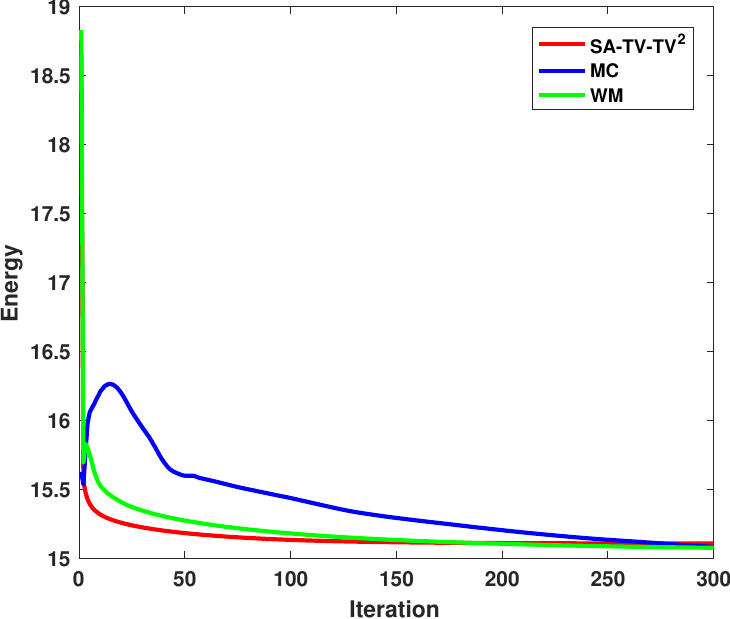}}
	\caption{Compared evaluations of ``triangle" and ``peppers'' by MC, WM and SA-TV-TV$^2$ methods in terms of relative errors and numerical energy.}
	\label{triangle30}
\end{figure}

\begin{table*}[t]
	\centering
	\caption{SSIM comparisons of various image denoising methods on test images for restoring noisy images corrupted by Gaussian noise with different standard deviation $\sigma$.}
	\begin{tabular}{c|c|c|c|c}
		\hline\hline
		 Methods & bars & triangle & cameraman & peppers \\
        \hline
		 Euler's elastica \cite{tai2011fast} & $0.9232$ & $0.9503$ & $0.8178$ & $0.8469$ \\ \hline
		 MC \cite{zhu2013augmented} & $0.9453$ & $0.9663$ & $0.8203$ & $0.8628$ \\ \hline
         TV-TV$^2$ \cite{papafitsoros2014combined} & $0.9308$ & $0.9561$ & $0.8224$ & $0.8571$ \\ \hline
         TGV \cite{bredies2010total} & $0.9380$ & $0.9358$ & $0.8157$ & $0.8474$ \\ \hline
		 SATV \cite{dong2011automated} & $0.9354$ & $0.9517$ & $0.8229$ & $0.8613$ \\ \hline
         WM  & $0.9554$ & $0.9715$ & $\textbf{0.8330}$ & $0.8762$ \\ \hline
		 SA-TV-TV$^2$ & $\textbf{0.9576}$ & $\textbf{0.9734}$ & $0.8284$ & $\textbf{0.8784}$ \\
		\hline\hline
	\end{tabular}\label{SSIM}
\end{table*}

\begin{table*}[t]
	\centering
	\caption{CPU time comparisons between various image denoising methods on test images corrupted by Gaussian noises, where the best two results are highlighted in bold and with underline, respectively.}
	\begin{tabular}{c|c|c|c|c}
		\hline\hline
		 Methods & bars & triangle & cameraman & peppers \\
        \hline
		 Euler's elastica \cite{tai2011fast} & $\textbf{2.48}$ & $\textbf{7.52}$ & $\textbf{8.69}$ & $\textbf{8.51}$ \\ \hline
		 MC \cite{zhu2013augmented} & $13.27$ & $41.02$ & $42.88$ & $42.41$ \\ \hline
         TV-TV$^2$ \cite{papafitsoros2014combined} & $7.94$ & $21.06$ & $22.58$ & $22.15$ \\ \hline
         TGV \cite{bredies2010total} & $10.09$ & $30.18$ & $31.98$ & $31.32$ \\ \hline
		 SATV \cite{dong2011automated} & $24.54$ & $98.85$ & $105.93$ & $103.14$ \\ \hline
		 WM  & $38.34$ & $117.77$ & $143.28$ & $142.53$ \\ \hline
		 SA-TV-TV$^2$ & $\underline{5.67}$ & $\underline{15.41}$ & $\underline{16.42}$ & $\underline{16.25}$ \\
		\hline\hline
	\end{tabular}\label{TIME}
\end{table*}

\subsection{Comparison experiments on image denoising}
We first illustrate the efficiency and superiority of the proposed models via various examples on image denoising. Four grayscale images displayed in Fig. \ref{testimages} are used to evaluate the performance of both our algorithms and comparison algorithms. To be specific, the synthetic images ``bars" and ``triangle" are degraded by Gaussian noises with zero mean and the standard deviation $\sigma=30$, while the real images ``cameraman" and ``peppers" are degraded by Gaussian noises with zero mean and the standard deviation $\sigma=20$. Different algorithms are stopped with the same termination condition such as $T_{\mathrm max}=300$ and $\varepsilon=2\times10^{-3}$ throughout this experiment. The specific values of both model and algorithm parameters for all comparison algorithms are provided separately in each example.

We display both the restoration results and the residual images of the two synthetic images in Fig. \ref{bars} and Fig. \ref{triangle}, and the denoising results and the selected local magnification views of the two real images in Fig. \ref{cameraman} and Fig. \ref{peppers}. In general, all methods can efficiently eliminate the noises, but only MC and our WM and SA-TV-TV$^2$ can well preserve the image structures and features. More specifically, the residual images obtained by the Euler's elastica, TV-TV$^2$ and SATV models contain many image details, while there is almost no signal left in the residual images of MC and WM, which confirms the contrast-preserving property of both MC and WM models. On the other hand, from the magnified images, we observe the TV-TV$^2$ model tends to obtain over-smoothed recovery results with blurry edges and missing details. The restored images of the Euler's elastica and TGV methods are not as smooth as others in the homogeneous regions. Although the SATV method can achieve almost satisfactory visual results owing to the spatially adapted regularization parameter, it suffers from some unnatural staircase-like artifacts in large homogeneous regions, e.g., the sky region in Fig. \ref{cameraman}. By contrast, the MC, WM and SA-TV-TV$^2$ models retain sharp edges and smoothed flat regions. The advantages of the contrast-preserving methods are also demonstrated by the PSNR and SSIM listed in Tables \ref{PSNR} and \ref{SSIM}. Although the MC model can also preserve image contrast, higher PSNR and SSIM are always achieved by our WM and SA-TV-TV$^2$ models owing to excellent geometric properties and spatially adapted operators.

\begin{figure*}[t]
      \centering
      \subfigure[triangle]{
			\includegraphics[width=0.35\linewidth]{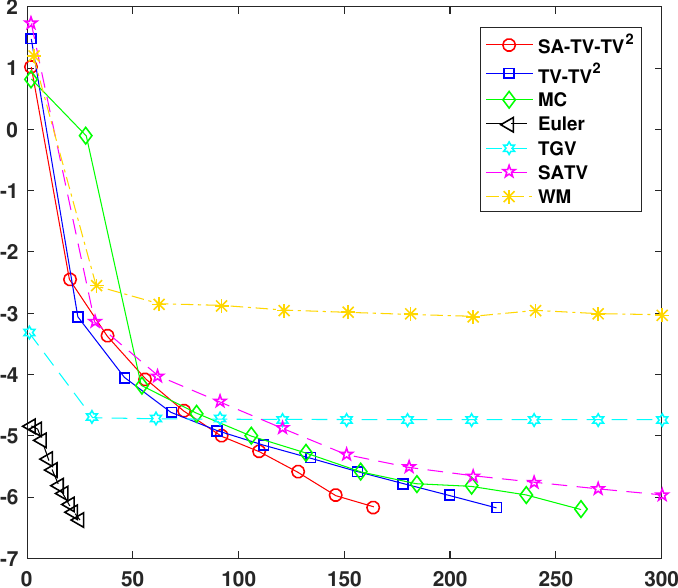}}\hspace{15ex}
      \subfigure[peppers]{
            \includegraphics[width=0.35\linewidth]{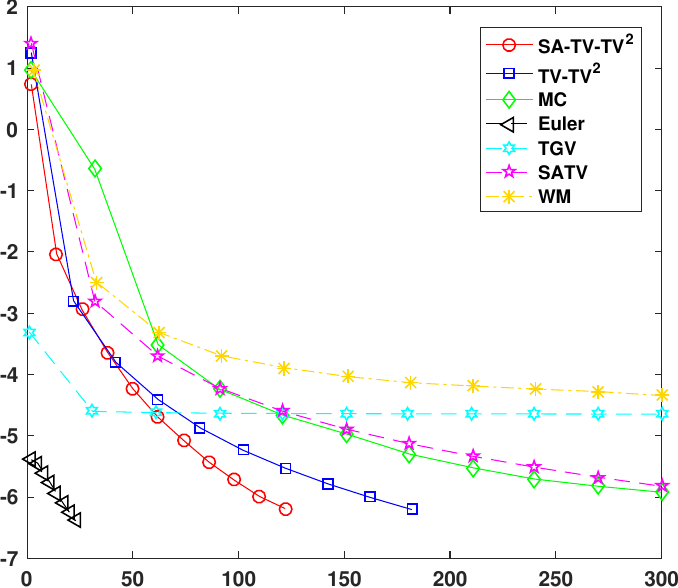}}
	\caption{Convergence curves of ``triangle" and ``peppers" by different methods.}
	\label{convergence}
\end{figure*}

\begin{figure*}[t]
      \begin{center}
			\includegraphics[width=1.00\linewidth]{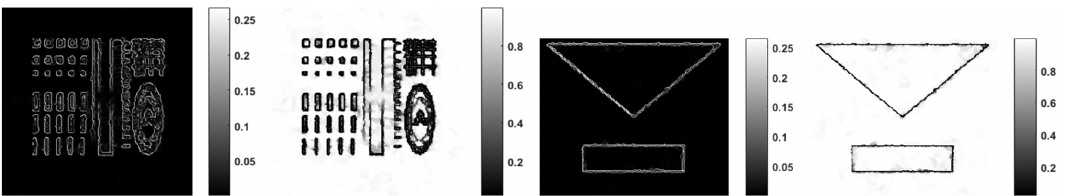}
	  \end{center}
	\caption{The spatially adaptive values of synthetic images in SA-TV-TV$^2$ method. From left to right: $\alpha(u)$ of ``bars", $\beta(u)$ of ``bars", $\alpha(u)$ of ``triangle", $\beta(u)$ of ``triangle", respectively.}
	\label{SAafabta30}
\end{figure*}

\begin{figure*}
      \begin{center}
			\includegraphics[width=1.00\linewidth]{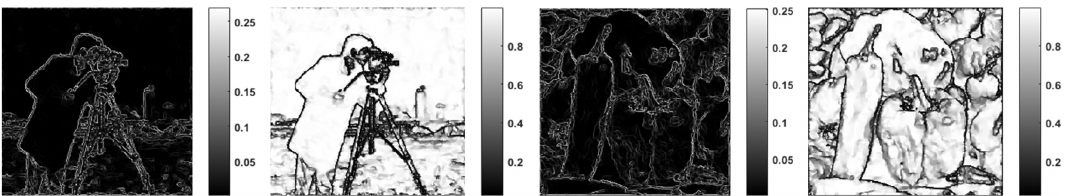}
	  \end{center}
	\caption{The spatially adaptive values of real images in SA-TV-TV$^2$ method. From left to right: $\alpha(u)$ of ``cameraman", $\beta(u)$ of ``cameraman", $\alpha(u)$ of ``peppers", $\beta(u)$ of ``peppers", respectively.}
	\label{SAafabta20}
\end{figure*}

\begin{figure*}[t]
      \begin{center}
			\includegraphics[width=1.00\linewidth]{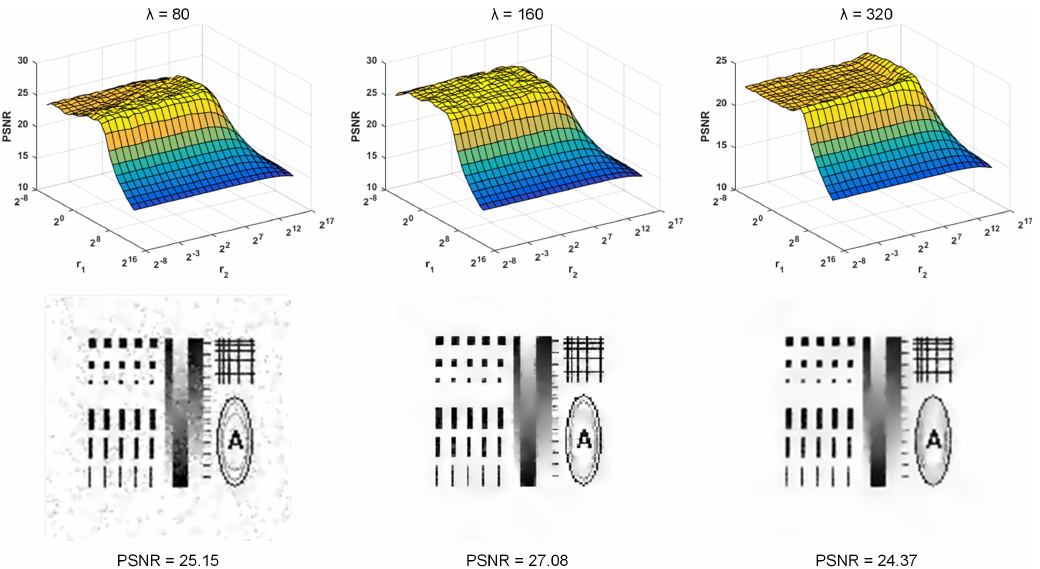}
	  \end{center}
	\caption{PSNRs of ``bars" by different penalty factors with fixed regularization parameters in SA-TV-TV$^2$ method. From left to right: the results of $\lambda=80$, $\lambda=160$ and $\lambda=320$, respectively.}
	\label{barspsnr}
\end{figure*}

We also track the decay of relative residuals \eqref{residual}, the relative errors in multipliers \eqref{multiplier}, the relative errors in $u^k$ \eqref{error} and the numerical energies \eqref{energy1}-\eqref{energy2}, which are displayed with \emph{log-scale} in Fig. \ref{cameraman20}. These plots can verify the convergence of Algorithm 1 and Algorithm 2 numerically. Fig. \ref{triangle30} records the curves of the relative error in $u^k$ and numerical energy decay of image ``triangle" and ``peppers" in \emph{log-scale} by the MC, WM and SA-TV-TV$^2$ models. Due to the dependence of gradient descent, the relative error of WM model converges much slower than the other two models, and our SA-TV-TV$^2$ model is faster and more stable than the MC model. Moreover, the numerical energies of the WM, SA-TV-TV$^2$ and MC models converge to similar values which also reveal the close relation of the three regularization terms.

Besides, we compare the CPU time consumption in Table \ref{TIME}, where the SA-TV-TV$^2$ model is much faster than other methods except for the Euler's elastica model, yet the WM model spends the highest computational cost due to the gradient descent procedure. The convergence curves in Fig. \ref{convergence} of image ``triangle" and ``peppers" also confirm that Euler's elastica and SA-TV-TV$^2$ converge faster than other approaches. Although our SA-TV-TV$^2$ model consumes more CPU time than the Euler's elastica model, it also produces much higher PSNR and SSIM values. Compared to the MC and SATV method, much CPU time is saved by our SA-TV-TV$^2$ model without any sacrifices of the recovery quality. The reason is that our SA-TV-TV$^2$ method contains fewer subproblems in each iteration and can terminate by the relative errors, while the WM, MC and SATV are all stopped by the maximum iteration number. The above evaluations convince that our Algorithm 2 can produce a similar restoration result as Algorithm 1, simultaneously saving much CPU time. Therefore, we only implement the SA-TV-TV$^2$ model in the following experiments.

It is not hard to find that the superior performance of the SA-TV-TV$^2$ model benefits from the spatially adapted regularization parameter $\alpha(u)$ and $\beta(u)$. Fig. \ref{SAafabta30} and Fig. \ref{SAafabta20} confirm that the convergent values of $\alpha(u)$ and $\beta(u)$ vary with image gradients in an opposite way. More especially, the model adaptively chooses small values of $\alpha(u)$ and large values of $\beta(u)$ in the homogeneous regions to promote the second-order regularization term for removing the noises as well as avoiding the staircase effect. On the other hand, large values of $\alpha(u)$ and small values of $\beta(u)$ are selected in textural regions to strengthen the first-order regularization term for allowing jumps and enhancing edges. In all, our SA-TV-TV$^2$ model can achieve a good trade-off between noise removal and feature preservation leading to satisfactory recovery results.

\begin{figure*}[t]
      \begin{center}
			\includegraphics[width=1.00\linewidth]{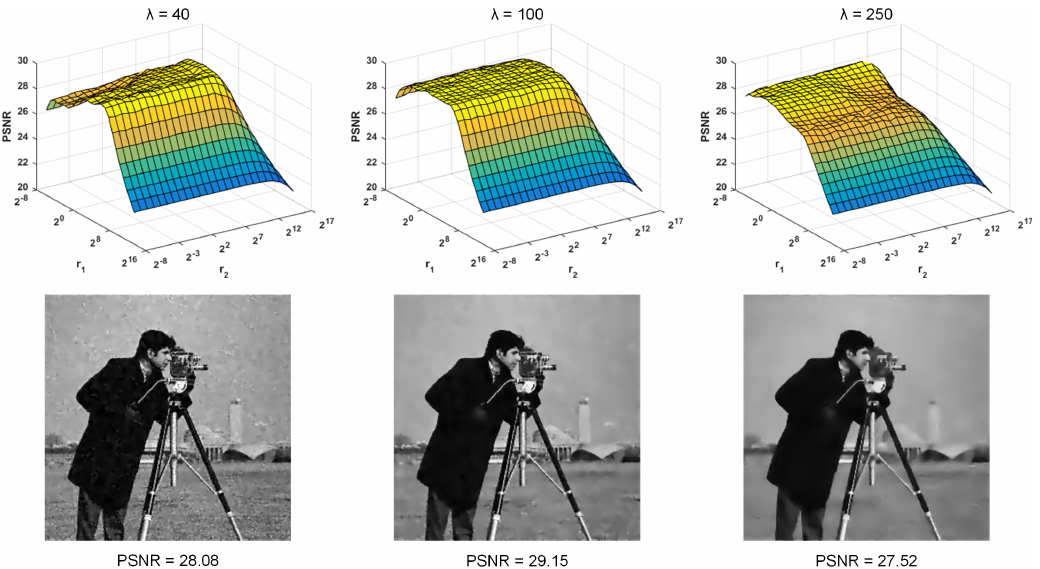}
	  \end{center}
	\caption{PSNRs of ``cameraman" by different penalty factors with fixed regularization parameters in SA-TV-TV$^2$ method. From left to right: the results of $\lambda=40$, $\lambda=100$ and $\lambda=250$, respectively.}
	\label{cameramanpsnr}
\end{figure*}

Besides, we discuss the impact of parameters $\lambda$ and $r_1$, $r_2$ in the SA-TV-TV$^2$ model on image ``bars" and ``cameraman"  to guide how to choose these parameters in practice. First, we vary the parameters $(r_1,r_2)\in\{r_1^0\times2^{-l_1},r_1^0\times2^{-l_1+1},\cdots,r_1^0\times2^{l_1-1},r_1^0\times2^{l_1}\} \times \{r_2^0\times2^{-l_2},r_2^0\times2^{-l_2+1},\cdots,r_2^0\times2^{l_2-1},r_2^0\times2^{l_2}\}$ with $r_1^0=16$, $r_2^0=32$ and $l_1=l_2=12$. Then, we select $\lambda \in \{80,160,320\}$ for the image ``bars" and $\lambda \in \{40,100,250\}$ for the image ``cameraman". As shown in Fig. \ref{barspsnr} and Fig. \ref{cameramanpsnr}, for fixed $\lambda$, there are relatively large intervals for $r_1$ and $r_2$ to generate good restoration results. Furthermore, we also show the best recovery results among various combinations of $r_1,r_2$ for each $\lambda = 80,160,320$ of the image ``bars'' in Fig. \ref{barspsnr} and $\lambda = 40,100,250$ of the image ``cameraman'' in Fig. \ref{cameramanpsnr}. It can be observed that small $\lambda$ leads to non-smoothed recovery results with some noises remaining, while large $\lambda$ results in over-smoothed recovery results with some details missing. Hence, the choice of $\lambda$ is related to the noise level of the degenerated images such that the larger the noises are, the larger $\lambda$ should be.

\begin{figure*}[t]
      \begin{center}
			\includegraphics[width=1.00\linewidth]{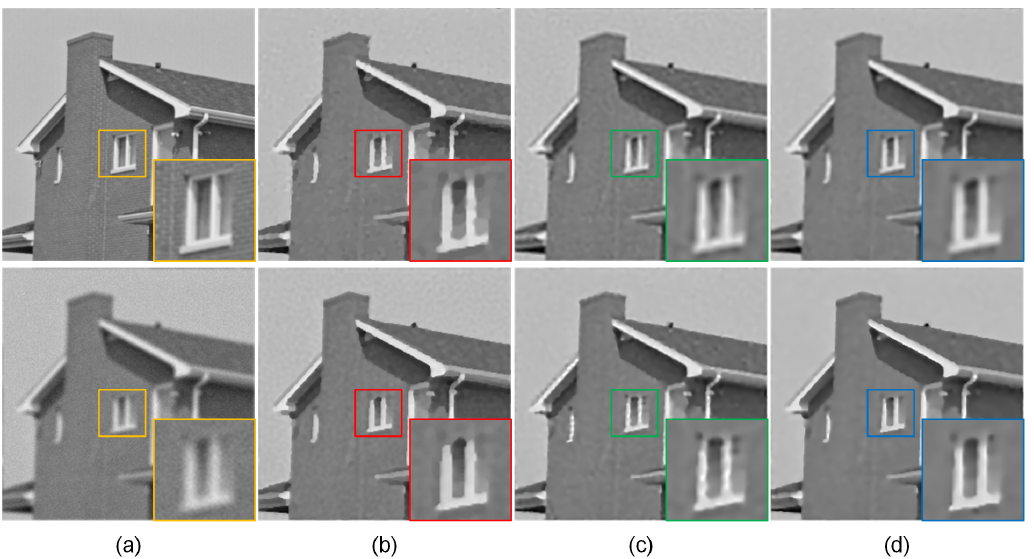}
	  \end{center}
	\caption{Deblurring comparisons of different parameters in TV-TV$^2$ and SA-TV-TV$^2$ methods on test image ``house". From left to right: (a) clear image and degraded image, (b) recovery images by TV-TV$^2$ method with $\alpha=0.4,\beta=0$ and SA-TV-TV$^2$ method with $\beta=0$, (c) recovery images by TV-TV$^2$ method with  $\alpha=0,\beta=0.4$ and SA-TV-TV$^2$ method with $\alpha=0$, (d) recovery images by TV-TV$^2$ method with $\alpha=0.4,\beta=0.4$ and SA-TV-TV$^2$ method, respectively.}
	\label{house}
\end{figure*}

\begin{table*}[t]
	\centering
	\caption{Evaluated comparisons of different parameters in TV-TV$^2$ and SA-TV-TV$^2$ methods on test image ``house" for restoring degraded image corrupted by Gaussian blur kernel with Gaussian noise of standard deviation $\sigma=5$.}
	\begin{tabular}{c|c|c|c}
		\hline\hline
		 \textbf{TV-TV$^2$} & $\alpha=0.4,\beta=0$ & $\alpha=0,\beta=0.4$ & $\alpha=0.4,\beta=0.4$  \\
        \hline
		 PSNR & $29.71$ & $29.47$ & $29.85$ \\ \hline
		 SSIM & $0.8121$ & $0.8027$ & $0.8168$  \\ \hline\hline
         \textbf{SA-TV-TV$^2$} & $\alpha=\alpha(u),\beta=0$ & $\alpha=0,\beta=\beta(u)$ & $\alpha=\alpha(u),\beta=\beta(u)$ \\ \hline
         PSNR & $29.25$ & $29.92$ & $\textbf{30.17}$ \\ \hline
         SSIM & $0.7992$ & $0.8193$ & $\textbf{0.8221}$ \\
		\hline\hline
	\end{tabular}\label{housedeblur}
\end{table*}

\subsection{Experiments on image deblurring}

In this subsection, we implement the image deblurring experiments under different degradations to illustrate the efficiency of our proposed method. The corresponding deblurring model can be formalized as follows
\begin{equation}\label{deblurringmodel}
\min_{u}~\int_{\mathrm{\Omega}}\alpha(u)|\nabla u|dx+\int_{\mathrm{\Omega}}\beta(u)|{\nabla}^2 u|_Fdx+\frac{1}{2\lambda}\int_{\mathrm{\Omega}}|Ku-f|^2dx,
\end{equation}
where the operator $K$ represents blur kernels.

The clean image ``house" is corrupted by Gaussian blur kernel (fspecial(`gaussian',[7 7],2)) and Gaussian noise of mean 0 and standard deviation 5 in Fig. \ref{house}(a), and the original image ``tomato" is degraded by the average blur kernel (fspecial(`average',[7 7])), followed by adding Gaussian noise of mean 0 with standard deviation 10 in Fig. \ref{tomato}(a). We set $r_1=r_2=4$, $\lambda=5$ and $r_1=r_2=0.2$, $\lambda=15$ for  ``house'' and ``tomato'', respectively. A series of experiments are conducted by comparing the SA-TV-TV$^2$ and TV-TV$^2$ method with different combinations of regularization parameters, i.e., $\beta=0$, $\alpha=0$ and $\alpha\neq0,\beta\neq0$. The image deblurring results and their local magnification views of the SA-TV-TV$^2$ and TV-TV$^2$ method are displayed in Fig. \ref{house} and Fig. \ref{tomato}, while the quantitative results are detailed in Table \ref{housedeblur} and \ref{tomatodeblur}.

We can see that both the recovery images of SA-TV-TV$^2$ and TV-TV$^2$ models suffer from serious staircase effect in the case of $\beta=0$, the main reason behind which is that TV regularization favors piecewise constant solutions. On the other hand, when $\alpha=0$, the results tend to be over-smoothed and with blurry edges due to the contrast reduction effect. The conclusion on visual comparisons is further confirmed by the quantitative results in terms of PSNR and SSIM as explored in Tables \ref{housedeblur} and \ref{tomatodeblur}. The best PSNR and SSIM are always obtained by the SA-TV-TV$^2$ model with non-zero spatially varying $\alpha$ and $\beta$, which demonstrate the advantages of the contrast-preserving regularization over other spatially adapted models \cite{chan2000high,bresson2007fast,Li2007,Zhang2013,Duan2016}.

\begin{figure*}[t]
      \begin{center}
			\includegraphics[width=1.00\linewidth]{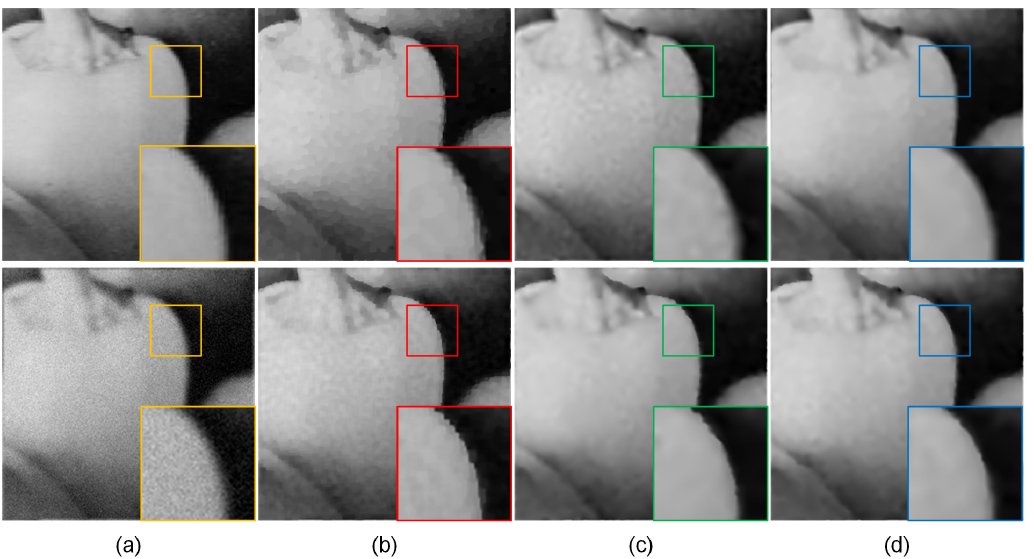}
	  \end{center}
	\caption{Deblurring comparisons of different parameters in TV-TV$^2$ and SA-TV-TV$^2$ methods on test image ``tomato". From left to right: (a) clear image and degraded image, (b) recovery images by TV-TV$^2$ method with $\alpha=1.5,\beta=0$ and SA-TV-TV$^2$ method with $\beta=0$, (c) recovery images by TV-TV$^2$ method with $\alpha=0,\beta=1.5$ and SA-TV-TV$^2$ method with $\alpha=0$, (d) recovery images by TV-TV$^2$ method with $\alpha=1.5,\beta=1.5$ and SA-TV-TV$^2$ method, respectively.}
	\label{tomato}
\end{figure*}

\begin{table*}[t]
	\centering
	\caption{Evaluated comparisons of different parameters in TV-TV$^2$ and SA-TV-TV$^2$ methods on test image ``tomato" for restoring degraded image corrupted by Average blur kernel with Gaussian noise of standard deviation $\sigma=10$.}
	\begin{tabular}{c|c|c|c}
		\hline\hline
		 \textbf{TV-TV$^2$} & $\alpha=1.5,\beta=0$ & $\alpha=0,\beta=1.5$ & $\alpha=1.5,\beta=1.5$  \\
        \hline
		 PSNR & $33.30$ & $31.82$ & $32.38$ \\ \hline
		 SSIM & $0.8810$ & $0.8885$ & $0.9050$  \\ \hline\hline
         \textbf{SA-TV-TV$^2$ }& $\alpha=\alpha(u),\beta=0$ & $\alpha=0,\beta=\beta(u)$ & $\alpha=\alpha(u),\beta=\beta(u)$ \\ \hline
         PSNR & $32.58$ & $33.75$ & $\textbf{34.24}$ \\ \hline
         SSIM & $0.8704$ & $0.9102$ & $\textbf{0.9153}$ \\
		\hline\hline
	\end{tabular}\label{tomatodeblur}
\end{table*}

\subsection{Experiments on color image denoising}
In this subsection, we implement our method on color image denoising. For the sake of simplicity, we aim to recover a color image ${\bf u}=(u^r,u^g,u^b):{\rm{\Omega}}\rightarrow\mathbb{R}^3$ channel by channel, and generate the restored image by combining the RGB channels together. Our spatially adapted first and second order regularization model for Gaussian noise removal can be defined as
\begin{equation}\label{colordenosiemodel}
\min_{\bf u}~\sum_{c \in\{r,g,b\}}(\int_{\mathrm{\Omega}}\alpha(u^c)|\nabla u^c|dx+\int_{\mathrm{\Omega}}\beta(u^c)|{\nabla}^2 u^c|_Fdx)+\sum_{c \in\{r,g,b\}} \frac{1}{2\lambda}\int_{\mathrm{\Omega}}|u^c-f^c|^2dx.
\end{equation}
As shown in Fig. \ref{colorDenoising}, the color images ``lena" and ``flower" are degraded by Gaussian noise with mean zero and the standard deviation $\sigma=\{20,30\}$, respectively. We use $r_1=1$, $r_2=2$ and $\lambda=\{80,160\}$ for the two images accordingly.  Although all methods can remove the noises and recover main image structural information, our method gives the best visual quality with not only sharp and clear edges but also the homogeneity in slanted regions. The corresponding qualitative evaluations are provided in Table \ref{colordenoise}, which also convince the sound effects of our proposal on color image denoising.

\begin{figure*}[t]
      \centering
      \subfigure{
			\includegraphics[width=0.19\linewidth]{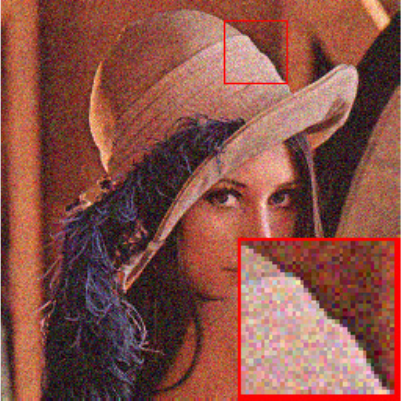}}\hspace{-1ex}
      \subfigure{
            \includegraphics[width=0.19\linewidth]{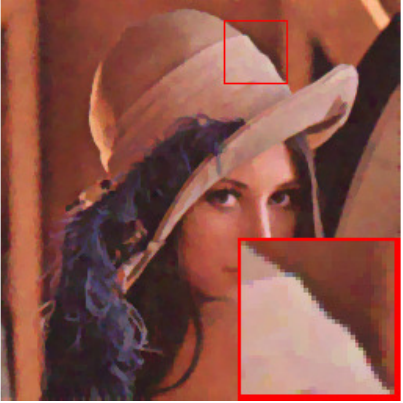}}\hspace{-1ex}
      \subfigure{
			\includegraphics[width=0.19\linewidth]{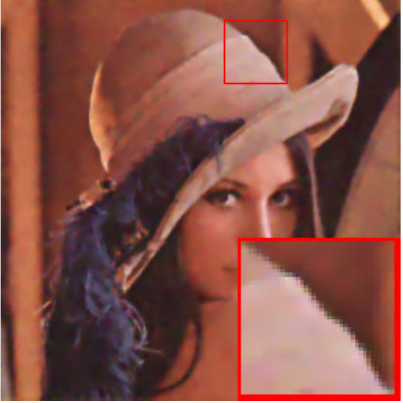}}\hspace{-1ex}
      \subfigure{
			\includegraphics[width=0.19\linewidth]{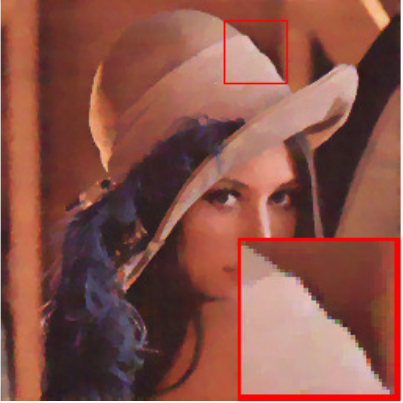}}\hspace{-1ex}
      \subfigure{
			\includegraphics[width=0.19\linewidth]{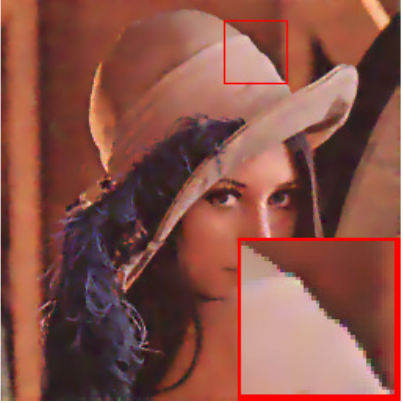}}\\
      \setcounter{subfigure}{0}
      \vspace{-0.25cm}
      \subfigure[Noisy images]{
            \includegraphics[width=0.19\linewidth]{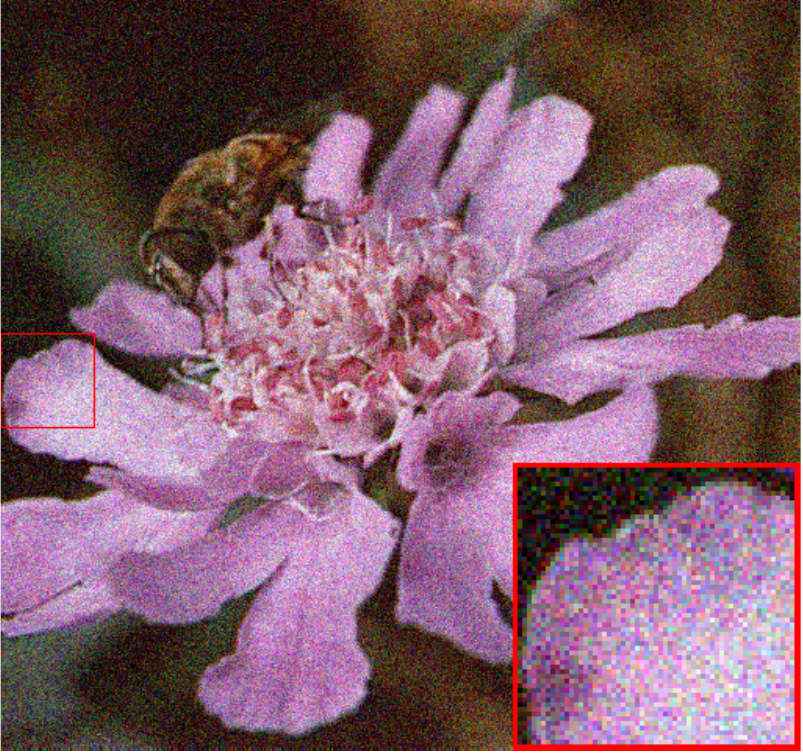}}\hspace{-1ex}
      \subfigure[Euler]{
            \includegraphics[width=0.19\linewidth]{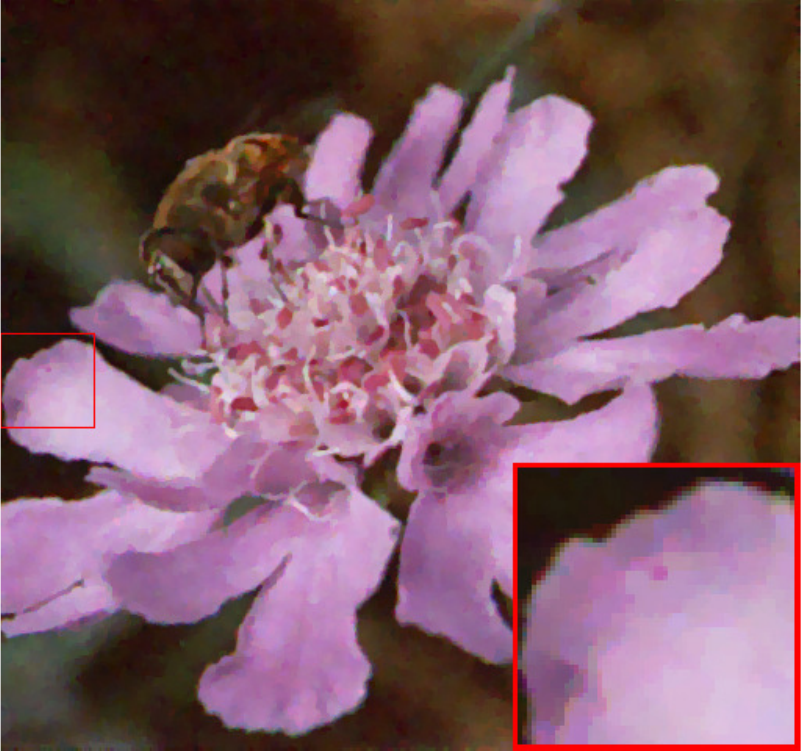}}\hspace{-1ex}
      \subfigure[TV-TV$^2$]{
            \includegraphics[width=0.19\linewidth]{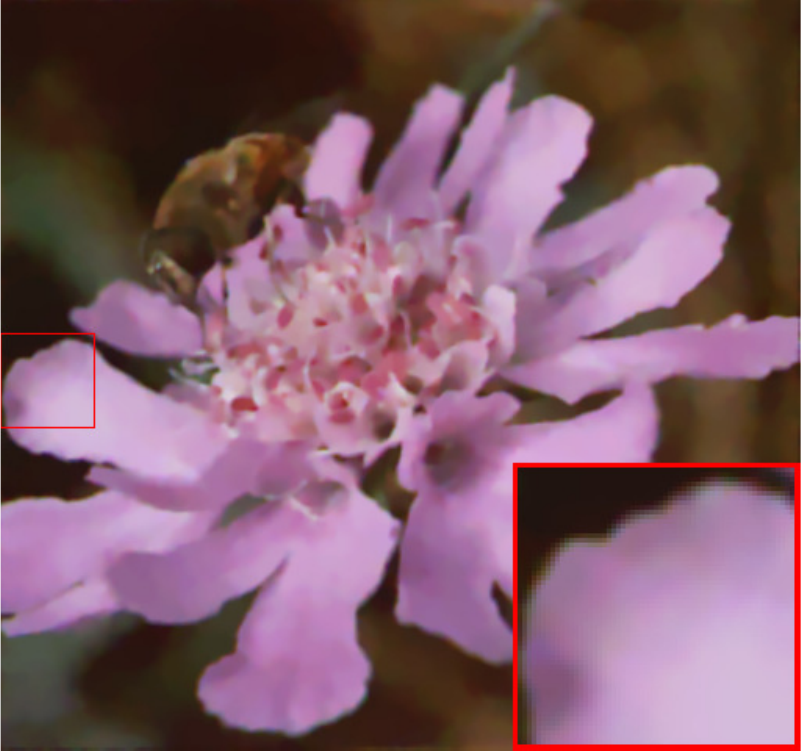}}\hspace{-1ex}
      \subfigure[TGV]{
            \includegraphics[width=0.19\linewidth]{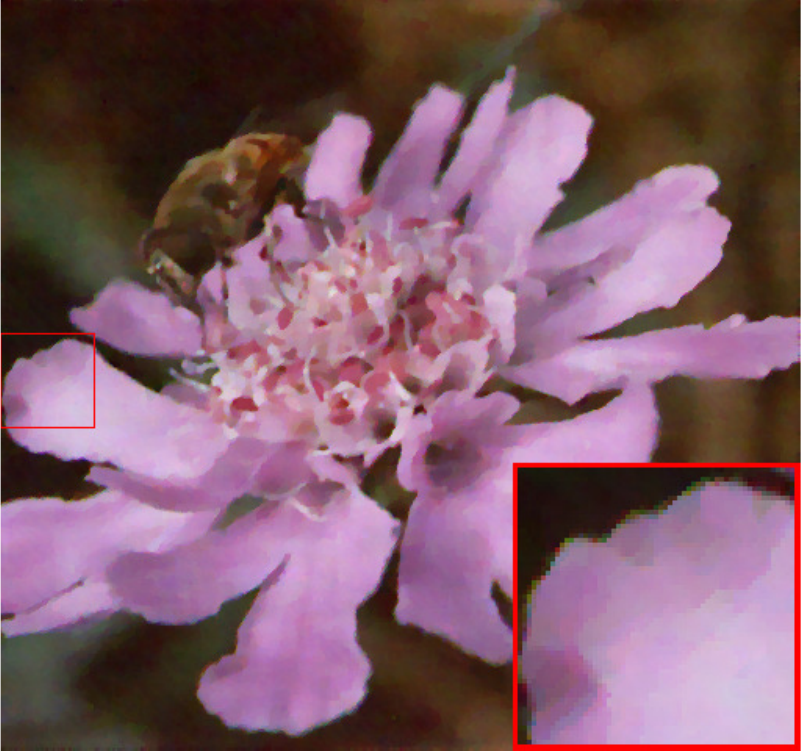}}\hspace{-1ex}
      \subfigure[SA-TV-TV$^2$]{
			\includegraphics[width=0.19\linewidth]{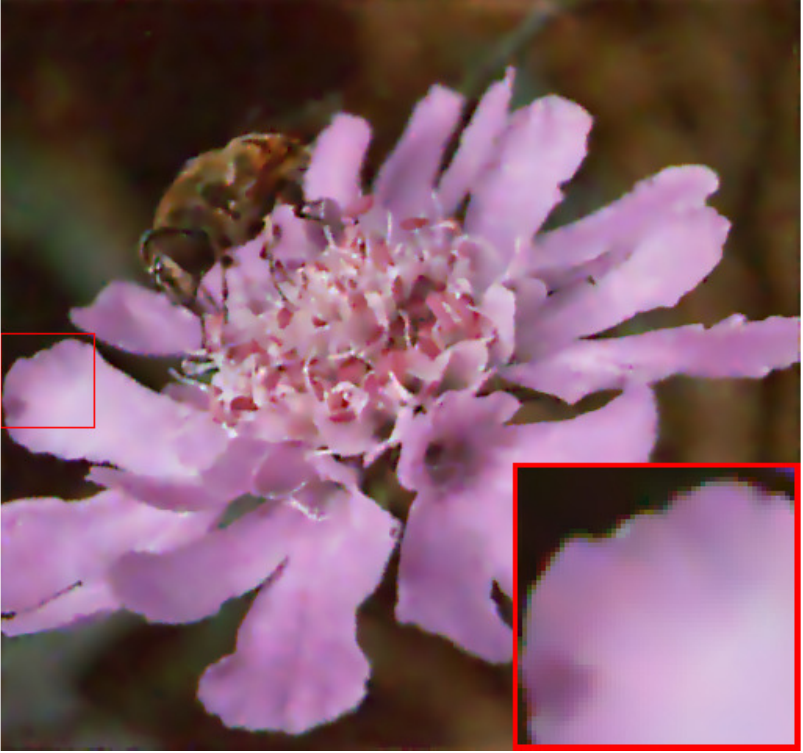}}
	\caption{The color image denoising results of ``lena" ($\sigma=20$) (top) and ``flower" ($\sigma=30$) (bottom) generated by the comparative methods. From left to right: (a) noisy images; (b) recovery images by Euler's elastica; (c) recovery images by TV-TV$^2$; (d) recovery images by TGV; (e) recovery images by SA-TV-TV$^2$.}
	\label{colorDenoising}
\end{figure*}

\begin{table*}[t]
\caption{Comparison of PSNR and SSIM on color image denoising examples among the Euler's elastica, TV-TV$^2$, TGV and SA-TV-TV$^2$ methods.}
\label{colordenoise}
\begin{center}
\begin{tabular}{c|c|c|c|c|c|c|c|c}
\hline\hline
Methods & \multicolumn{2}{|c}{Euler's elastica} & \multicolumn{2}{|c}{TV-TV$^2$} & \multicolumn{2}{|c}{TGV} & \multicolumn{2}{|c}{SA-TV-TV$^2$}  \\
\hline
Images & PSNR & SSIM & PSNR & SSIM & PSNR & SSIM & PSNR & SSIM \\
\hline\hline
lena $(\sigma=20)$ & 29.32 & 0.9406 & 29.25 & 0.9392 & 29.15 & 0.9363 & \bf{29.88} & \bf{0.9453} \\
\hline
flower $(\sigma=30)$ & 30.26 & 0.9478 & 30.38 & 0.9520 & 29.83 & 0.9471 & \bf{30.98} & \bf{0.9575} \\
\hline\hline
\end{tabular}
\end{center}
\end{table*}

\subsection{Experiments on image inpainting}

Finally, we demonstrate some examples of our SA-TV-TV$^2$ method on image inpainting problems. In general, the task of image inpainting is to reconstruct a missing part of an image using information from the intact part. The missing part of the image is called the inpainting domain and is denoted by $D \subseteq \mathrm{\Omega}$. Image inpainting has been extensively studied in the literature including TV inpainting \cite{getreuer2012total}, curvature driven diffusion inpainting \cite{chan2001non}, Mumford-Shah based inpainting \cite{esedoglu2002digital} and Euler's elastica inpainting \cite{tai2011fast}. The spatially varying first and second order regularization inpainting model is described as follows
\[\min_{u}~\int_{\mathrm{\Omega}}\alpha(u)|\nabla u|dx+\int_{\mathrm{\Omega}}\beta(u)|{\nabla}^2 u|_Fdx+\frac{1}{2\lambda}\int_{\mathrm{\Omega\setminus D}}(u-f)^2dx.\]
In order to obtained an efficient ADMM algorithm, we introduce three auxiliary variables and rewrite the above minimization problem into the following constrained one
\begin{equation}\label{inpaintingmodel}
\begin{split}
& \min_{u,v,w}~\int_{\mathrm{\Omega}}\alpha(u)|v|dx+\int_{\mathrm{\Omega}}\beta(u)|w|_Fdx+\frac{1}{2\lambda}\int_{\mathrm{\Omega\setminus D}}(z-f)^2dx \\
& ~~\mathrm{s.t.,}~~~z=u,~v=\nabla u,~ w={\nabla}^2 u.
\end{split}
\end{equation}
More details for dealing with the constrained optimization problem \eqref{inpaintingmodel} can be referred to \cite{papafitsoros2013combined}.

In Fig. \ref{Inpaintingimages}, we present two convincing examples of image inpainting by our method. We can observe that the reconstructed regions can naturally blend into background, see Fig. \ref{Inpaintingimages} (a2) and (b2). In addition, we compare the SA-TV-TV$^2$ model, TV-TV$^2$ model \cite{papafitsoros2013combined} and Euler's elastica model \cite{tai2011fast} on a simple synthetic image. As shown by Fig. \ref{inpaintingcompare} (b) and (f), the TV inpainting model with the constant regularization parameter gives nearly piecewise constant result inside the inpainting domain, while the TV model with adaptive parameter also fails to fill such a large gap in between the two branches. Actually, the $\mathrm{TV}^2$ model can somehow connect the gap as shown in Fig. \ref{inpaintingcompare} (c) and (g) with the price of some blur. Similar problem happened to Fig. \ref{inpaintingcompare} (d) and (e), which are obtained by the TV-TV$^2$ method and the Euler's elastica method, respectively. It is clearly shown that our SA-TV-TV$^2$ model gives the visually best inpainting result, which can recover the gap using straight edges; see Fig. \ref{inpaintingcompare} (h).

\begin{figure*}[t]
      \begin{center}
			\includegraphics[width=1.00\linewidth]{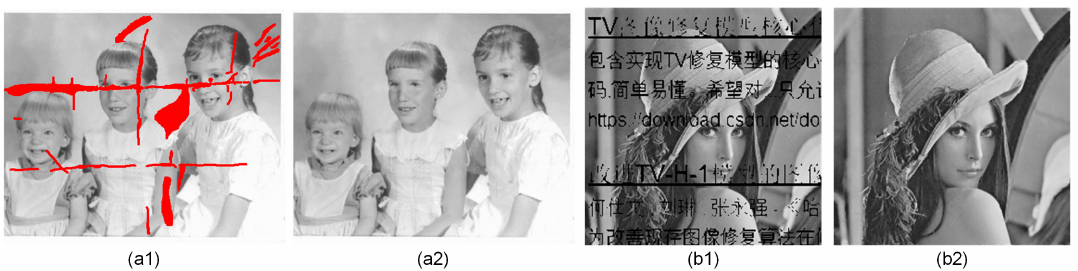}
	  \end{center}
	\caption{Inpainting results of real images by the SA-TV-TV$^2$ method. The parameters are selected as $r_1=2$, $r_2=4$, $r_3=0.005$ and $\lambda=2$.}
	\label{Inpaintingimages}
\end{figure*}

\begin{figure*}[t]
      \begin{center}
			\includegraphics[width=1.00\linewidth]{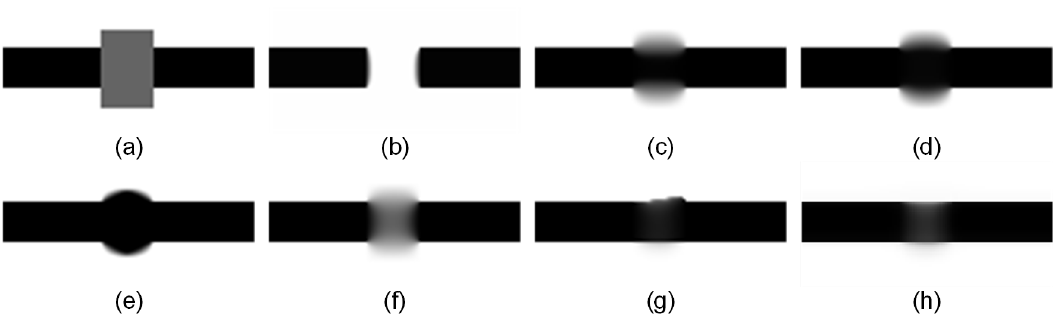}
	  \end{center}
	\caption{Inpainting comparisons of different parameters in TV-TV$^2$, Euler's elastica and SA-TV-TV$^2$ methods. From left to right and top to bottom: (a) the degraded image, (b) TV with $\alpha=10$, (c) $\mathrm{TV}^2$ with $\beta=5$, (d) TV-TV$^2$ with $\alpha=10,\beta=5$, (e) Euler's elastica model, (f) TV with $\alpha(u)$ in \eqref{wtv2}, (g) $\mathrm{TV}^2$ with $\beta(u)$ in \eqref{wtv2} and (h) SA-TV-TV$^2$.}
	\label{inpaintingcompare}
\end{figure*}

\section{Conclusion}
\label{sec6}
In this paper, we proposed a novel Weingarten map minimization model for image restoration problems. Our model was shown can ideally preserve image contrast, edges and corners of objects. We developed an ADMM-based algorithm for solving the high order variational model. More than that, we further derived a spatially adapted first and second order regularization from the Weingarten map, and presented a more efficient algorithm by solving all subproblems with either FFT or closed-form solution. Numerous numerical experiments are conducted on both synthetic and real images to demonstrate the efficacious and ascendant performance of the proposed models. By comparing with other well established high order models, we showed the spatially adapted first and second order regularization can not only keep image intensity contrast and eliminate staircase effect, but also save the computational cost.

\section*{Appendix A}
Let $f=h\chi_A(x,y)$ be a binary function defined on a rectangle domain $\rm\Omega = (-2R, 2R)\times (-2R, 2R)$ with $A\subset\mathrm{\Omega}$ being an arbitrary open set with $C^2$ boundary. We consider the following integral of Weingarten map regularization by triangle inequality
\begin{align*}
\int_{\mathrm{\Omega}} |W_{f}|_Fdxdy &= \int_{\mathrm{\Omega}} \bigg|\nabla \bigg(\frac{\nabla f}{\sqrt{1+|\nabla f|^2}}\bigg)\bigg|_Fdxdy \\
&= \int_{\mathrm{\Omega}}\sqrt{{\bigg|\nabla\frac{f_x}{\sqrt{1+|\nabla f|^2}}\bigg|}^2+{\bigg|\nabla\frac{f_y}{\sqrt{1+|\nabla f|^2}}\bigg|}^2}dxdy \\
& \leq \int_{\mathrm{\Omega}}\bigg|\nabla\frac{f_x}{\sqrt{1+|\nabla f|^2}}\bigg|dxdy+\int_{\mathrm{\Omega}}\bigg|\nabla\frac{f_y}{\sqrt{1+|\nabla f|^2}}\bigg|dxdy,
\end{align*}
where $f_x$ and $f_y$ are the first-order differential operators of $f$, i.e., $\nabla f=(f_x,f_y)$, and $|\cdot|$ denotes the Euclidean norm. Similar to the total variation regularization, owing to $\frac{f_x}{\sqrt{1+|\nabla f|^2}}\leq1$ and $\frac{f_y}{\sqrt{1+|\nabla f|^2}}\leq1$ at every point $(x,y)$ on $\mathrm{\Omega}$, we can arrive at the following conclusion based on divergence theorem and Cauchy-Schwartz inequality
\begin{align*}
\int_{\mathrm{\Omega}} |W_{f}|_Fdxdy &\leq \int_{\mathrm{\Omega}}\bigg|\nabla\frac{f_x}{\sqrt{1+|\nabla f|^2}}\bigg|dxdy+\int_{\mathrm{\Omega}}\bigg|\nabla\frac{f_y}{\sqrt{1+|\nabla f|^2}}\bigg|dxdy \\
&= \sup_{\substack{p\in{C_c^1}(\mathrm{\Omega},\mathbb{R}^{n}) \\ \|p\|_\infty\leq 1}}\int_{\rm\Omega}\frac{f_x}{\sqrt{1+|\nabla f|^2}}\div pdxdy +\sup_{\substack{q\in{C_c^1}(\mathrm{\Omega},\mathbb{R}^{n}) \\ \|q\|_\infty\leq 1}}\int_{\rm\Omega}\frac{f_y}{\sqrt{1+|\nabla f|^2}}\div qdxdy \\
&= \sup_{\substack{p\in{C_c^1}(\mathrm{\Omega},\mathbb{R}^{n}) \\ \|p\|_\infty\leq 1}}\int_{\partial A}\frac{f_x}{\sqrt{1+|\nabla f|^2}} p\cdot\nu d\mathcal{H}^1 +\sup_{\substack{q\in{C_c^1}(\mathrm{\Omega},\mathbb{R}^{n}) \\ \|q\|_\infty\leq 1}}\int_{\partial A}\frac{f_y}{\sqrt{1+|\nabla f|^2}} q\cdot\nu d\mathcal{H}^1\\
&\leq \sup_{\substack{p\in{C_c^1}(\mathrm{\Omega},\mathbb{R}^{n}) \\ \|p\|_\infty\leq 1}}\int_{\partial A}\bigg|\frac{f_x}{\sqrt{1+|\nabla f|^2}}\bigg| |p\cdot\nu| d\mathcal{H}^1+ \sup_{\substack{q\in{C_c^1}(\mathrm{\Omega},\mathbb{R}^{n}) \\ \|q\|_\infty\leq 1}}\int_{\partial A}\bigg|\frac{f_y}{\sqrt{1+|\nabla f|^2}}\bigg| |q\cdot\nu| d\mathcal{H}^1 \\
&\leq 2\int_{\partial A}d\mathcal{H}^1 = 2Per(A,\mathrm{\Omega}),
\end{align*}
which shows that the integral of Weingarten map is independent of $h$.

\section*{Appendix B}
Let $f=\sum_{i=1}^n h_i\chi_i(x,y)$ be a piecewise constant function defined on a rectangle domain ${\mathrm{\Omega}}$, where $\chi_i$ is the characteristic function of the subdomain ${\rm\Omega}_i$.
Similarly, we can reformulate the Weingarten map regularization over the image domain $\rm\Omega$ as follows
\begin{align*}
\int_{\mathrm{\Omega}} |W_{f}|_Fdxdy & \leq \sum_{i=1}^n\int_{\mathrm{\Omega}}\bigg|\nabla\frac{{h_i\chi_i}_x}{\sqrt{1+|h_i\nabla \chi_i|^2}}\bigg|dxdy +\sum_{i=1}^n\int_{\mathrm{\Omega}}\bigg|\nabla\frac{{h_i\chi_i}_y}{\sqrt{1+|h_i\nabla {\chi_i}|^2}}\bigg|dxdy \\
&=\sum_{i=1}^n \sup_{\substack{p_i\in{C_c^1}(\mathrm{\Omega},\mathbb{R}^{n}) \\ \|p_i\|_\infty\leq 1}}\int_{\rm\Omega}\frac{{h_i\chi_i}_x}{\sqrt{1+|h_i\nabla {\chi_i}|^2}}\div p_idxdy +\sum_{i=1}^n \sup_{\substack{q_i\in{C_c^1}(\mathrm{\Omega},\mathbb{R}^{n}) \\ \|q_i\|_\infty\leq 1}}\int_{\rm\Omega}\frac{{h_i\chi_i}_y}{\sqrt{1+|h_i\nabla {\chi_i}|^2}}\div q_idxdy \\
&= \sum_{i=1}^n \sup_{\substack{p_i\in{C_c^1}(\mathrm{\Omega},\mathbb{R}^{n}) \\ \|p_i\|_\infty\leq 1}}\int_{\partial \mathrm{\Omega}_i}\frac{{h_i\chi_i}_x}{\sqrt{1+|h_i\nabla {\chi_i}|^2}} p_i\cdot\nu_i d\mathcal{H}^1 +\sum_{i=1}^n \sup_{\substack{q_i\in{C_c^1}(\mathrm{\Omega},\mathbb{R}^{n}) \\ \|q_i\|_\infty\leq 1}}\int_{\partial \mathrm{\Omega}_i}\frac{{h_i\chi_i}_y}{\sqrt{1+|h_i\nabla {\chi_i}|^2}} q_i\cdot\nu_i d\mathcal{H}^1\\
&\leq \sum_{i=1}^n \sup_{\substack{p_i\in{C_c^1}(\mathrm{\Omega},\mathbb{R}^{n}) \\ \|p_i\|_\infty\leq 1}}\int_{\partial \mathrm{\Omega}_i}\bigg|\frac{{h_i\chi_i}_x}{\sqrt{1+|h_i\nabla {\chi_i}|^2}}\bigg| |p_i\cdot\nu_i| d\mathcal{H}^1+ \sum_{i=1}^n \sup_{\substack{q_i\in{C_c^1}(\mathrm{\Omega},\mathbb{R}^{n}) \\ \|q_i\|_\infty\leq 1}}\int_{\partial \mathrm{\Omega}_i}\bigg|\frac{{h_i\chi_i}_y}{\sqrt{1+|h_i\nabla {\chi_i}|^2}}\bigg| |q_i\cdot\nu_i| d\mathcal{H}^1 \\
&\leq \sum_{i=1}^n 2\int_{\partial \mathrm{\Omega}_i}d\mathcal{H}^1 = 2\sum_{i=1}^n Per(\mathrm{\Omega}_i,\mathrm{\Omega}),
\end{align*}
which is also independent of $h$.

\begin{acknowledgements}
The authors would like to thank Prof. Wei Zhu from the University of Alabama for providing us with MATLAB code of \cite{zhu2013augmented}. We also would like to thank the anonymous referees and the editor for the valuable comments and helpful suggestions to improve this paper. The work is supported by National Natural Science Foundation of China (NSFC 12071345, 11701418), Major Science and Technology Project of Tianjin 18ZXRHSY00160 and Recruitment Program of Global Young Expert.
\end{acknowledgements}


\begin{thebibliography}{10}

\bibitem{almansa2008tv}
Andr{\'e}s Almansa, Coloma Ballester, Vicent Caselles, and Gloria Haro.
\newblock A {TV} based restoration model with local constraints.
\newblock {\em Journal of Scientific Computing}, 34(3):209--236, 2008.

\bibitem{bae2011graph}
Egil Bae, Juan Shi, and Xue-Cheng Tai.
\newblock Graph cuts for curvature based image denoising.
\newblock {\em IEEE Transactions on Image Processing}, 20(5):1199--1210, 2011.

\bibitem{beck2009fast}
Amir Beck and Marc Teboulle.
\newblock A fast iterative shrinkage-thresholding algorithm for linear inverse
  problems.
\newblock {\em SIAM Journal on Imaging Sciences}, 2(1):183--202, 2009.

\bibitem{bertalmio2003tv}
Marcelo Bertalm{\'\i}o, Vicent Caselles, Bernard Roug{\'e}, and A~Sol{\'e}.
\newblock {TV} based image restoration with local constraints.
\newblock {\em Journal of Scientific Computing}, 19(1-3):95--122, 2003.

\bibitem{bioucas2007new}
Jos{\'e}~M Bioucas-Dias and M{\'a}rio~AT Figueiredo.
\newblock A new twist: two-step iterative shrinkage/thresholding algorithms for
  image restoration.
\newblock {\em IEEE Transactions on Image Processing}, 16(12):2992--3004, 2007.

\bibitem{bredies2010total}
Kristian Bredies, Karl Kunisch, and Thomas Pock.
\newblock Total generalized variation.
\newblock {\em SIAM Journal on Imaging Sciences}, 3(3):492--526, 2010.

\bibitem{bresson2007fast}
Xavier Bresson, Selim Esedo$\bar{\rm g}$lu, Pierre Vandergheynst, Jean-Philippe
  Thiran, and Stanley Osher.
\newblock Fast global minimization of the active contour/snake model.
\newblock {\em Journal of Mathematical Imaging and vision}, 28(2):151--167,
  2007.

\bibitem{brito2016image}
Carlos Brito-Loeza, Ke~Chen, and Victor Uc-Cetina.
\newblock Image denoising using the {Gaussian} curvature of the image surface.
\newblock {\em Numerical Methods for Partial Differential Equations},
  32(3):1066--1089, 2016.

\bibitem{chambolle2010introduction}
Antonin Chambolle, Vicent Caselles, Daniel Cremers, Matteo Novaga, and Thomas
  Pock.
\newblock An introduction to total variation for image analysis.
\newblock In {\em Theoretical foundations and numerical methods for sparse
  recovery}, pages 263--340. de Gruyter, 2010.

\bibitem{chambolle1997image}
Antonin Chambolle and Pierre-Louis Lions.
\newblock Image recovery via total variation minimization and related problems.
\newblock {\em Numerische Mathematik}, 76(2):167--188, 1997.

\bibitem{Chambolle2010}
Antonin Chambolle and Thomas Pock.
\newblock A first-order primal-dual algorithm for convex problems
  with~applications to imaging.
\newblock {\em Journal of Mathematical Imaging and Vision}, 40(1):120--145,
  2010.

\bibitem{Chambolle2019}
Antonin Chambolle and Thomas Pock.
\newblock Total roto-translational variation.
\newblock {\em Numerische Mathematik}, 142(3):611--666, 2019.

\bibitem{chan2000high}
Tony Chan, Antonio Marquina, and Pep Mulet.
\newblock High-order total variation-based image restoration.
\newblock {\em SIAM Journal on Scientific Computing}, 22(2):503--516, 2000.

\bibitem{chan2007image}
Tony~F Chan, Selim Esedoglu, and Frederick~E Park.
\newblock Image decomposition combining staircase reduction and texture
  extraction.
\newblock {\em Journal of Visual Communication and Image Representation},
  18(6):464--486, 2007.

\bibitem{Chan2002Euler}
Tony~F. Chan, Sung~Ha Kang, and Jianhong Shen.
\newblock {Euler's} elastica and curvature-based inpainting.
\newblock {\em SIAM Journal on Applied Mathematics}, 63(2):564--592, 2002.

\bibitem{chan2001non}
Tony~F Chan and Jianhong Shen.
\newblock Non-texture inpainting by curvature-driven diffusions.
\newblock {\em Journal of Visual Communication and Image Representation},
  12(4):436--449, 2001.

\bibitem{chung2016learning}
C~Chung, Juan~Carlos De~los Reyes, and Carola-Bibiane Sch{\"o}nlieb.
\newblock Learning optimal spatially-dependent regularization parameters in
  total variation image restoration.
\newblock {\em ArXiv Preprint ArXiv:1603.09155}, 2016.

\bibitem{Deng2019}
Liang-Jian Deng, Roland Glowinski, and Xue-Cheng Tai.
\newblock A new operator splitting method for the euler elastica model for
  image smoothing.
\newblock {\em {SIAM} Journal on Imaging Sciences}, 12(2):1190--1230, 2019.

\bibitem{dong2011automated}
Yiqiu Dong, Michael Hinterm{\"u}ller, and M~Monserrat Rincon-Camacho.
\newblock Automated regularization parameter selection in multi-scale total
  variation models for image restoration.
\newblock {\em Journal of Mathematical Imaging and Vision}, 40(1):82--104,
  2011.

\bibitem{Duan2016}
Jinming Duan, Zhaowen Qiu, Wenqi Lu, Guodong Wang, Zhenkuan Pan, and Li~Bai.
\newblock An edge-weighted second order variational model for image
  decomposition.
\newblock {\em Digital Signal Processing}, 49:162--181, 2016.

\bibitem{esedoglu2002digital}
Selim Esedoglu and Jianhong Shen.
\newblock Digital inpainting based on the {Mumford-Shah-Euler} image model.
\newblock {\em European Journal of Applied Mathematics}, 13(4):353--370, 2002.

\bibitem{getreuer2012total}
Pascal Getreuer.
\newblock Total variation inpainting using split {Bregman}.
\newblock {\em Image Processing On Line}, 2:147--157, 2012.

\bibitem{gilboa2006variational}
Guy Gilboa, Nir Sochen, and Yehoshua~Y Zeevi.
\newblock Variational denoising of partly textured images by spatially varying
  constraints.
\newblock {\em IEEE Transactions on Image Processing}, 15(8):2281--2289, 2006.

\bibitem{goldstein2009split}
Tom Goldstein and Stanley Osher.
\newblock The split {Bregman} method for {$L_1$} regularized problems.
\newblock {\em SIAM Journal on Imaging Sciences}, 2(2):323--343, 2009.

\bibitem{hinterberger2006variational}
Walter Hinterberger and Otmar Scherzer.
\newblock Variational methods on the space of functions of bounded {Hessian}
  for convexification and denoising.
\newblock {\em Computing}, 76(1):109--133, 2006.

\bibitem{hintermuller2017optimal1}
Michael Hinterm{\"u}ller and Carlos~N Rautenberg.
\newblock Optimal selection of the regularization function in a weighted total
  variation model. part i: Modelling and theory.
\newblock {\em Journal of Mathematical Imaging and Vision}, 59(3):498--514,
  2017.

\bibitem{hintermuller2017optimal2}
Michael Hinterm{\"u}ller, Carlos~N Rautenberg, Tao Wu, and Andreas Langer.
\newblock Optimal selection of the regularization function in a weighted total
  variation model. part ii: Algorithm, its analysis and numerical tests.
\newblock {\em Journal of Mathematical Imaging and Vision}, 59(3):515--533,
  2017.

\bibitem{hintermuller2010expected}
Michael Hinterm{\"u}ller and M~Monserrat Rincon-Camacho.
\newblock Expected absolute value estimators for a spatially adapted
  regularization parameter choice rule in {L1-TV-based} image restoration.
\newblock {\em Inverse Problems}, 26(8):085005, 2010.

\bibitem{kunisch2013bilevel}
Karl Kunisch and Thomas Pock.
\newblock A bilevel optimization approach for parameter learning in variational
  models.
\newblock {\em SIAM Journal on Imaging Sciences}, 6(2):938--983, 2013.

\bibitem{langer2017automated}
Andreas Langer.
\newblock Automated parameter selection for total variation minimization in
  image restoration.
\newblock {\em Journal of Mathematical Imaging and Vision}, 57(2):239--268,
  2017.

\bibitem{Li2007}
Fang Li, Chaomin Shen, Jingsong Fan, and Chunli Shen.
\newblock Image restoration combining a total variational filter and a
  fourth-order filter.
\newblock {\em Journal of Visual Communication and Image Representation},
  18(4):322--330, 2007.

\bibitem{Lysaker2004}
M.~Lysaker, S.~Osher, and X.-C. Tai.
\newblock Noise removal using smoothed normals and surface fitting.
\newblock {\em {IEEE} Transactions on Image Processing}, 13(10):1345--1357,
  2004.

\bibitem{lysaker2003noise}
Marius Lysaker, Arvid Lundervold, and Xue-Cheng Tai.
\newblock Noise removal using fourth-order partial differential equation with
  applications to medical magnetic resonance images in space and time.
\newblock {\em IEEE Transactions on Image Processing}, 12(12):1579--1590, 2003.

\bibitem{lysaker2006iterative}
Marius Lysaker and Xue-Cheng Tai.
\newblock Iterative image restoration combining total variation minimization
  and a second-order functional.
\newblock {\em International Journal of Computer Vision}, 66(1):5--18, 2006.

\bibitem{Meyer2001Oscillating}
Yves Meyer.
\newblock Oscillating patterns in image processing and nonlinear evolution
  equations.
\newblock {\em University Lecture Ser. 22, AMS, Province, RI, 2002}.

\bibitem{osher2002level}
Stanley Osher and Ronald Fedkiw.
\newblock {\em Level set methods and dynamic implicit surfaces}.
\newblock Springer-Verlag, New York, 2002.

\bibitem{papafitsoros2014combined}
Konstantinos Papafitsoros and Carola-Bibiane Sch{\"o}nlieb.
\newblock A combined first and second order variational approach for image
  reconstruction.
\newblock {\em Journal of Mathematical Imaging and Vision}, 48(2):308--338,
  2014.

\bibitem{papafitsoros2013combined}
Konstantinos Papafitsoros, Carola~Bibiane Sch{\"o}nlieb, and Bati Sengul.
\newblock Combined first and second order total variation inpainting using
  split {Bregman}.
\newblock {\em Image Processing On Line}, 3:112--136, 2013.

\bibitem{rudin1992nonlinear}
Leonid~I Rudin, Stanley Osher, and Emad Fatemi.
\newblock Nonlinear total variation based noise removal algorithms.
\newblock {\em Physica D: Nonlinear Phenomena}, 60(1-4):259--268, 1992.

\bibitem{shen2003euler}
Jianhong Shen, Sung~Ha Kang, and Tony~F Chan.
\newblock Euler's elastica and curvature-based inpainting.
\newblock {\em SIAM Journal on Applied Mathematics}, 63(2):564--592, 2003.

\bibitem{strong2003edge}
David Strong and Tony Chan.
\newblock Edge-preserving and scale-dependent properties of total variation
  regularization.
\newblock {\em Inverse problems}, 19(6):S165, 2003.

\bibitem{tai2011fast}
Xue-Cheng Tai, Jooyoung Hahn, and Ginmo~Jason Chung.
\newblock A fast algorithm for {Euler's} elastica model using augmented
  {Lagrangian} method.
\newblock {\em SIAM Journal on Imaging Sciences}, 4(1):313--344, 2011.

\bibitem{wang2004image}
Zhou Wang, Alan~C Bovik, Hamid~R Sheikh, and Eero~P Simoncelli.
\newblock Image quality assessment: from error visibility to structural
  similarity.
\newblock {\em IEEE Transactions on Image Processing}, 13(4):600--612, 2004.

\bibitem{wen2012parameter}
You-Wei Wen and Raymond~H Chan.
\newblock Parameter selection for total-variation-based image restoration using
  discrepancy principle.
\newblock {\em IEEE Transactions on Image Processing}, 21(4):1770--1781, 2012.

\bibitem{wu2018general}
Chunlin Wu, Zhifang Liu, and Shuang Wen.
\newblock A general truncated regularization framework for contrast-preserving
  variational signal and image restoration: Motivation and implementation.
\newblock {\em Science China Mathematics}, 61(9):1711--1732, 2018.

\bibitem{wu2010augmented}
Chunlin Wu and Xue-Cheng Tai.
\newblock Augmented {Lagrangian} method, dual methods, and split {Bregman}
  iteration for {ROF}, vectorial {TV}, and high order models.
\newblock {\em SIAM Journal on Imaging Sciences}, 3(3):300--339, 2010.

\bibitem{yashtini2016fast}
Maryam Yashtini and Sung~Ha Kang.
\newblock A fast relaxed normal two split method and an effective weighted {TV}
  approach for {Euler's} elastica image inpainting.
\newblock {\em SIAM Journal on Imaging Sciences}, 9(4):1552--1581, 2016.

\bibitem{yuan2012multiframe}
Qiangqiang Yuan, Liangpei Zhang, and Huanfeng Shen.
\newblock Multiframe super-resolution employing a spatially weighted total
  variation model.
\newblock {\em IEEE Transactions on Circuits and Systems for Video Technology},
  22(3):379--392, 2012.

\bibitem{Zhang2013}
Yi~Zhang, Wei-Hua Zhang, Hu~Chen, Meng-Long Yang, Tai-Yong Li, and Ji-Liu Zhou.
\newblock Few-view image reconstruction combining total variation and a
  high-order norm.
\newblock {\em International Journal of Imaging Systems and Technology},
  23(3):249--255, 2013.

\bibitem{Zhong2021image}
Qiuxiang Zhong, Ke~Yin, and Yuping Duan.
\newblock Image reconstruction by minimizing curvatures on image surface.
\newblock {\em Journal of Mathematical Imaging and Vision}, 63(1):30--55, 2021.

\bibitem{zhu2012image}
Wei Zhu and Tony Chan.
\newblock Image denoising using mean curvature of image surface.
\newblock {\em SIAM Journal on Imaging Sciences}, 5(1):1--32, 2012.

\bibitem{zhu2013augmented}
Wei Zhu, Xue-Cheng Tai, and Tony Chan.
\newblock Augmented {Lagrangian} method for a mean curvature based image
  denoising model.
\newblock {\em Inverse Problems and Imaging}, 7(4):1409--1432, 2013.

\end{thebibliography}


\end{document}